\newtheorem{thm}{Theorem}[section]
\theoremstyle{definition}
\newtheorem{dfn}{Definition}[section]
\newtheorem{lem}{Lemma}[section]
\newtheorem{example}{Example}[section]
\newtheorem{rmrk}{Remark}[]
\newcommand{\thickhline}{%
	\noalign {\ifnum 0=`}\fi \hrule height 1pt
	\futurelet \reserved@a \@xhline
}
\journal{European Journal of Operational Research }
\begin{document}
\begin{frontmatter}
\title{\textbf{Generalized-Hukuhara Subgradient and its Application in Optimization Problem with Interval-valued Functions}}
\author[iitbhu_math]{Amit Kumar Debnath}
        \ead{amitkdebnath.rs.mat18@itbhu.ac.in}
\author[iitbhu_math]{Debdas Ghosh\corref{cor1}}
        \ead{debdas.mat@iitbhu.ac.in}
\author[radko,radko2]{Radko Mesiar}
        \ead{mesiar@math.sk}
\author[iitbhu_math]{Ram Surat Chauhan}
        \ead{rschauhan.rs.mat16@itbhu.ac.in}
\address[iitbhu_math]{Department of Mathematical Sciences,  Indian Institute of Technology (BHU) Varanasi \\ Uttar Pradesh--221005, India}
\cortext[cor1]{Corresponding author}
\address[radko]{Faculty of Civil Engineering, Slovak University of Technology, Radlinsk\'{e}ho 11, 810 05, Bratislava}
\address[radko2]{Palack\'y University Olomouc, Faculty of Science, Department of Algebra and Geometry, 17. listopadu 12, 771 46 Olomouc, Czech Republic}

\begin{abstract}
In this article, the concepts of $gH$-subgradients and $gH$-subdifferentials of interval-valued functions are illustrated. Several important characteristics of the $gH$-subdifferential of a convex interval-valued function, e.g., closeness, boundedness, chain rule, etc. are studied. Alongside, we prove that $gH$-subdifferential of a $gH$-differentiable convex interval-valued function only contains $gH$-gradient of that interval-valued function. It is observed that the $gH$-directional derivative of a convex interval-valued function in each direction is maximum of all the products of $gH$-subgradients and the direction. Importantly, we show that a convex interval-valued function is $gH$-Lipschitz continuous if it has $gH$-subgradients at each point in its domain. Furthermore, the relations between efficient solutions of an optimization problem with interval-valued function and its $gH$-subgradients are derived.  \\
\end{abstract}

\begin{keyword}
(R) Convex programming\sep $gH$-subgradient \sep $gH$-subdifferential \sep Interval optimization problems\\
\vspace{1.0cm}
AMS Mathematics Subject Classification (2010):  26B25 $\cdot$ 90C25 $\cdot$ 90C30.

\end{keyword}
\end{frontmatter}

\section{\textbf{Introduction}}

\noindent In the real-life decision-making process, we often face the optimization problem with nonsmooth functions and in order to deal with the optimization problems with nonsmooth functions, the concepts of subgradient and subdifferential inevitably arise. Due to inexact and imprecise natures of many real-world occurrences the study of Interval-Valued Functions (IVFs) and Optimization problems with IVFs, known as Interval Optimization Problems (IOP)s, become substantial topics to the researchers. In this article, we illustrate the concepts of subgradient and subdifferential for IVFs, and study the several important characteristics of subgradient and subdifferential of IVFs. We also study the optimality conditions for nonsmooth IOPs. As intervals are the inextricable things in IVFs and IOPs, before making a survey on IVFs and IOPs, we make a survey on the arithmetic and ordering of intervals.

\subsection{Literature Survey}

In the literature of IVFs, to deal with compact intervals and IVFs, Moore \cite{Moore1966}  developed interval arithmetic. There are a few limitations (see \cite{Ghosh2019derivative} for details) of Moore's interval arithmetic; especially, Moore's interval arithmetic cannot provide the additive inverse of a nondegenerate interval. By a nondegenerate interval, we mean an interval whose upper and lower limits are different. To overcome this difficulty, Hukuhara \cite{Hukuhara1967} proposed a new rule for the difference of intervals, known as Hukuhara difference of intervals. Although the Hukuhara difference provides the additive inverse of any compact interval, it is not applicable between all pairs of compact intervals (see \cite{Ghosh2019derivative} for details). For this reason, the `nonstandard subtraction', introduced by Markov \cite{Markov1979}, has been used and named as generalized Hukuhara difference ($gH$-difference) by Stefanini \cite{Stefanini2008}. The generalized Hukuhara difference is applicable for all pairs of compact intervals and it also provides the additive inverse of any compact interval.\\

Unlike the real numbers, intervals are not linearly ordered. Isibuchi and Tanaka \cite{Ishibuchi1990}  suggested a few partial ordering relations of intervals. In  \cite{Bhurjee2012} some ordering relations based on the parametric representation of intervals are proposed. Also, an ordering relation of intervals is provided in \cite{Costa2015} by a bijective map from the set of intervals to $\mathbb{R}^2$. However, all the ordering relations of \cite{Bhurjee2012, Costa2015} can be derived from the ordering relations of \cite{Ishibuchi1990}. The concept of variable ordering relation of intervals is introduced in \cite{Ghosh2020ordering}.\\

Calculus is one of the most important tools in functional analysis. Therefore, as like the real-valued, vector-valued functions, the development of the calculus for IVFs is much more essential to study the characteristics of IVFs. In order to develop the calculus of IVFs, the concept of differentiability of IVFs was initially introduced by Hukuhara  \cite{Hukuhara1967} with the help of Hukuhara difference of intervals. However, this definition of Hukuhara differentiability is restrictive \cite{Chalco2013-2}. Based on $gH$-difference, the concepts of $gH$-derivative, $gH$-partial derivative, $gH$-gradient, and $gH$-differentiability for IVFs are provided in \cite{Chalco2013kkt, Ghosh2016newton, Markov1979, Stefanini2009, Stefanini2019}. Lupulescu studied the differentiability and the integrability for the IVFs on time scales in \cite{Lupulescu2013} and developed the fractional calculus for IVFs in \cite{Lupulescu2014}. The concept of directional $gH$-derivative for IVF is depicted in \cite{Bao2016, Stefanini2019}. Recently, Ghosh et al.\ \cite{Ghosh2019derivative} have introduced the idea of $gH$-G\^{a}teaux derivative, and $gH$-Fr\'echet derivative of IVFs.\\

Based on the existing ordering relations of intervals and calculus of IVFs many authors developed the theories to characterize the solutions to IOPs. For instance, using the concept of Hukuhara differentiability, Wu proposed Karush-Kuhn-Tucker (KKT) conditions for IOPs in \cite{Wu2007}. In \cite{Wu2008}, Wu presented the solution concepts of IOPs with the help of bi-objective optimization. Also, Wu presented some duality conditions for IOPs in \cite{Wu2008duality, Wu2010}. Using the concept of $gH$-differentiability, Chalco-Cano et al. \cite{Chalco2013kkt} presented KKT conditions for IOPs. Ghosh et al. \cite{Ghosh2019extended} developed generalized KKT conditions to obtain the solution of the IOPs. 
Recently, Stefanini et al. \cite{Stefanini2019} have depicted the optimality conditions for IOPs using the concepts of directional $gH$-derivative and total $gH$-derivative of IVFs, and  Ghosh et al. have developed the optimality conditions for IOPs using the concepts of $gH$-G\^{a}teaux derivative, and $gH$-Fr\'echet derivative of IVFs.\\

The authors of \cite{Ahmad2017, Antczak2017, Jayswal2016, Van2018, Van2020} proposed various optimality and duality conditions for nonsmooth IOPs converting them into real-valued multiobjective optimization. However, in this approach, one needs the closed-form of boundary functions of the interval-valued objective and constrained functions as readily available, which is practically difficult. Because even for a very simple IVF $\textbf{F}$, the closed forms of the lower boundary function $\underline{f}$ and upper boundary function $\overline{f}$ are not easy to execute; for instance, consider $\textbf{F}(x_1, x_2) = \tfrac{[-1, 6] \odot x_1 \oplus [3, 5]\odot x_2}{ [-2, 7] \odot x_1 \oplus [-4, 0]\odot x_2}$ for all $(x_1, x_2)\in \mathbb{R}^2$. Apart from these, based on parametric representations of the IVFs, some authors \cite{Bhurjee2012, Ghosh2016newton, Ghosh2017quasinewton} studied IOPs and developed theories to obtain the solutions to IOPs by converting them into real-valued optimization problems. The authors of \cite{Bhurjee2016} proposed some optimality conditions and duality results of a nonsmooth convex IOP using the parametric representation of its interval-valued objective and constrained functions. However, the parametric process is also practically difficult. Because in the parametric process, the number of variables increases with the number of intervals involved in the IVFs, and to verify any property of an IVF one has to verify it for an infinite number of its corresponding real-valued function. For instance, see Definition $9$ in \cite{Bhurjee2016}.

\subsection{Motivation and Contribution}
 From the literature of IVFs and IOPs, it is observed that the concepts of subgradient and subdifferentials for IVFs are not properly introduced. However, the authors of \cite{Hai2018} proposed the concepts of subgradient and subdifferentials for $n$-cell convex fuzzy-valued functions (FVFs) and proved that the subdifferentials of convex FVFs are convex. But they didn't mention about other important properties of subgradient and subdifferentials of FVFs, such as closeness, boundedness, chain rule, etc. of subdifferentials. As IVFs are the special case of FVFs, in this article, at first adopting the concept of subgradient for convex FVFs of the article \cite{Hai2018} we define subgradient of convex IVFs (namely $gH$-subgradient). Thereafter, we illustrate the concept of subgradient for convex IVFs in terms of linear IVFs. Subsequently, we define subdifferential of convex IVFs (namely $gH$-subdifferential) and study its various properties. We prove that $gH$-subdifferentials of convex IVFs are closed, and bounded sets. In order to prove these properties, the norm on the set of $gH$-continuous bounded linear IVFs is defined and the idea of sequences with their convergence on the set of $n$-tuple of compact intervals is described. Although the author of \cite{Karaman2020} provided the concept of subgradients for IVFs in terms of linear functions, our concept is more general (please see Remark \ref{rsg} of this article for details).\\

 Along with the aforementioned properties of $gH$-subdifferentials, several important characteristics of $gH$-subgradients are also studied in this article. Interestingly, it is observed that if a convex IVF has $gH$-subgradients at each point in its domain, then the IVF is  $gH$-Lipschitz continuous. It is reported that the $gH$-directional derivative of a convex IVF is the maximum of the products of the $gH$-subgradients and the concerning direction. The chain rule of a convex IVF and the $gH$-subgradient of the sum of finite numbers of convex IVFs are illustrated. Also, some optimality conditions of nonsmooth convex IOP \emph{without applying the parametric approach} are explored in this article. Most importantly, it is to mention that all the proposed definitions and the results of this article are applicable to general IVFs regardless of whether or not
\begin{enumerate}[(i)]
\item the IVFs can be expressed parametrically, or
\item the explicit form of the lower and upper boundary functions of the IVFs can be found.
\end{enumerate}

\subsection{Delineation}
 The proposed work is organized as follows. The next section deals with some basic terminologies and notions of intervals analysis followed by the convexity and calculus of IVFs. The concepts of $gH$-subgradients and $gH$-subdifferentials of IVFs with their several important characteristics are illustrated in Section \ref{ssd}. It is shown that the $gH$-subdifferential of a convex IVF is closed and bounded. It is observed that a $gH$-differentiable convex IVF has only one $gH$-subgradient. It is also proved that the $gH$-directional derivative of a convex interval-valued function in each direction is maximum of all the products of $gH$-subgradients and the direction. Further in Section \ref{ssd}, it is shown that a convex IVF is $gH$-Lipschitz continuous if it has $gH$-subgradients at each point in its domain. Apart from these, The chain rule of a convex IVF and the $gH$-subgradient of the sum of finite numbers of convex IVFs are illustrated. The relations between efficient solutions of an IOP with $gH$-subgradients of its objective function are derived in Section \ref{siop}. Finally, the last section is concerned with a few future directions for this study.

\section{\textbf{Preliminaries and Terminologies}}
	
\noindent This section is devoted to some basic terminologies and notions on intervals. Convexity and calculus of IVFs are also described here. The ideas and notations that we describe in this section are used throughout the paper.

\subsection{Interval Arithmetic, Dominance Relation and Sequence of Intervals}\label{ssai}
%
%
%
	
Let $\mathbb{R}$ be the set of real numbers, $\mathbb{R}_+$ be the set of all nonnegative real numbers, and $I(\mathbb{R})$ be the set of all compact intervals. We denote the elements of $I(\mathbb{R})$ by bold capital letters ${\textbf A}, {\textbf B}, {\textbf C}, \cdots $. We represent an element $\textbf{A}$ of $I(\mathbb{R})$ with the help of corresponding small letter in the following way
\[
\textbf{A} = [\underline{a}, \overline{a}].
\]
Similarly, ${\textbf B} = [\underline{b}, \overline{b}]$, ${\textbf C} = [\underline{c}, \overline{c}]$, and so on.  It is to note that any singleton set $\{p\}$ of $\mathbb{R}$ can be represented by the interval $[\underline{p},\;\overline{p}]$ with  $\underline{p}=p=\overline{p}$. In particular, $\textbf{0}=\{0\}=[0, 0]$.\\

In this article, along with the Moore's interval addition ($\oplus$), substraction ($\ominus$), multiplication ($\odot$), and division ($\oslash$) \cite{Moore1966,Moore1987}:
\begin{align*}
&\textbf{A} \oplus \textbf{B} = \left[\underline{a} + \underline{b}, \overline{a} +
\overline{b}\right],~\textbf{A} \ominus \textbf{B} = \left[\underline{a} - \overline{b}, \overline{a} -
\underline{b}~\right],\\
&\textbf{A} \odot \textbf{B}  = \left[\min\left\{\underline{a}\underline{b}, \underline{a}\overline{b},\overline{a}\underline{b}, \overline{a}\overline{b}\right\}, \max\left\{\underline{a}\underline{b}, \underline{a}\overline{b},\overline{a}\underline{b}, \overline{a}\overline{b}\right\} \right],\\
&\textbf{A} \oslash \textbf{B} = \left[\min\left\{\underline{a}/\underline{b}, \underline{a}/\overline{b}, \overline{a}/\underline{b}, \overline{a}/\overline{b}\right\}, \max\left\{\underline{a}/\underline{b}, \underline{a}/\overline{b}, \overline{a}/\underline{b}, \overline{a}/\overline{b}\right\} \right], \text{ provided } 0\not\in \textbf{B},
\end{align*}
we use $gH$-difference ($\ominus_{gH}$) of intervals because $\textbf{A}\ominus\textbf{A}\neq\textbf{0}$ for a nondegenerate interval $\textbf{A}$. The $gH$-difference \cite{Markov1979, Stefanini2008} of the interval $\textbf{B}$ from the interval $\textbf{A}$ is defined by the interval $\textbf{C}$ such that
\[
\textbf{A} =  \textbf{B} \oplus  \textbf{C} ~\text{ or }~ \textbf{B} = \textbf{A}
\ominus \textbf{C}.
\]
It is to be noted that for $\textbf{A} = \left[\underline{a},\overline{a}\right]$ and $\textbf{B} = \left[\underline{b},\overline{b}\right]$,
\[
\textbf{A} \ominus_{gH} \textbf{B} = \left[\min\{\underline{a}-\underline{b},
\overline{a} - \overline{b}\}, \max\{\underline{a}-\underline{b}, \overline{a} -
\overline{b}\}\right]~\text{and}~\textbf{A} \ominus_{gH} \textbf{A} = \textbf{0}.
\]
\begin{rmrk}\label{ria1}
It is easy to check that the addition of intervals are commutative and associative, and
\[
\textbf{A} \ominus \textbf{B}=\textbf{A} \oplus (-1)\odot\textbf{B}.
\]
\end{rmrk}
The algebraic operations on the product space $I(\mathbb{R})^n=I(\mathbb{R})\times I(\mathbb{R})\times \cdots \times I(\mathbb{R})$ ($n$ times) are defined as follows.
\begin{dfn}(\emph{Algebraic operations on $I(\mathbb{R})^n$})\label{daoirn}. Let $\widehat{\textbf{A}} = \left(\textbf{A}_1, \textbf{A}_2, \cdots, \textbf{A}_n\right)$ and $\widehat{\textbf{B}} = (\textbf{B}_1, \textbf{B}_2,$ $\cdots, \textbf{B}_n)$ be two elements of $I(\mathbb{R})^n$. An algebraic operation $\star$ between $\widehat{\textbf{A}}$ and $\widehat{\textbf{B}}$, denoted by $\widehat{\textbf{A}} \star \widehat{\textbf{B}}$, is defined by
\[
\widehat{\textbf{A}} \star \widehat{\textbf{B}}=\left(\textbf{A}_1\star \textbf{B}_1, \textbf{A}_2\star \textbf{B}_2, \cdots, \textbf{A}_n\star \textbf{B}_n\right),
\]
where $\star\in\{\oplus,\ \ominus, \ \ominus_{gH}\}$.
\end{dfn}
The authors of \cite{Ishibuchi1990} defined the ordering relations of intervals of the following types `$\leq_{LR}$', `$\leq_{CW}$', and `$\leq_{LC}$'. In this article, we only use `$\leq_{LR}$' ordering relation and simply denote by `$\preceq$'. Also, it is to mention that in view of the ordering relation `$\preceq$', we define the dominance relations of intervals as follows.
\begin{dfn}({\it Dominance relations on intervals}). Let $\textbf{A}$ and $\textbf{B}$ be two intervals in $I(\mathbb{R})$.
\begin{enumerate}[(i)]
\item $\textbf{B}$ is said to be dominated by $\textbf{A}$ if $\underline{a}\leq \underline{b}$ and $\overline{a}\leq\overline{b}$, and then we write $\textbf{A}\preceq \textbf{B}$;
\item $\textbf{B}$ is said to be strictly dominated by $\textbf{A}$ if either $\underline{a} \leq \underline{b}$  and $\overline{a} < \overline{b}$ or $\underline{a} < \underline{b}$  and $\overline{a} \leq \overline{b}$, and then we write $\textbf{A}\prec \textbf{B}$;
\item if $\textbf{B}$ is not dominated by $\textbf{A}$, then we write $\textbf{A}\npreceq \textbf{B}$; if $\textbf{B}$ is not strictly dominated by $\textbf{A}$, then we write $\textbf{A}\nprec \textbf{B}$;
\item if $\textbf{A}\npreceq \textbf{B}$ and $\textbf{B}\npreceq \textbf{A}$, then we say that none of $\textbf{A}$ and $\textbf{B}$ dominates the other, or $\textbf{A}$ and $\textbf{B}$ are not comparable.
\end{enumerate}
\end{dfn}
Now we illustrate the concept of sequence in $I(\mathbb{R})^n$ and study its convergence. To do so, we need the concepts of norm on $I(\mathbb{R})$ as well as on $I(\mathbb{R})^n$.
\begin{dfn}\label{irnorm}
(\emph{Norm on $I(\mathbb{R})$} \cite{Moore1966}). For an $\textbf{A} = \left[\underline{a}, \bar{a}\right]$ in $ I(\mathbb{R})$, the function ${\lVert \cdot \rVert}_{I(\mathbb{R})} : I(\mathbb{R}) \rightarrow \mathbb{R}_+$, defined by
\[
{\lVert \textbf{A} \rVert}_{I(\mathbb{R})} = \max \{|\underline{a}|, |\bar{a}|\},
\]
is a norm on $I(\mathbb{R})$.
\end{dfn}
\begin{dfn}\label{irnnorm}(\emph{Norm on $I(\mathbb{R})^n$}). For an $\widehat{\textbf{A}} = \left(\textbf{A}_1, \textbf{A}_2, \cdots, \textbf{A}_n\right)\in I(\mathbb{R})^n$, the function ${\lVert \cdot \rVert}_{I(\mathbb{R})^n} : I(\mathbb{R}) \rightarrow \mathbb{R}_+$, defined by
\[
{\lVert \widehat{\textbf{A}} \rVert}_{I(\mathbb{R})^n} = \sqrt{\sum_{i=1}^n {\lVert \textbf{A}_i \rVert}_{I(\mathbb{R})}^2},
\]
is a norm on $I(\mathbb{R})^n$. To prove that the function ${\lVert \cdot \rVert}_{I(\mathbb{R})^n}$ satisfies all the properties of a norm please see \ref{apirnnorm}.
\end{dfn}
In the rest of the article, we use the symbols `${\lVert \cdot \rVert}_{I(\mathbb{R})}$' and `${\lVert \cdot \rVert}_{I(\mathbb{R})^n}$' to denote the norms on $I(\mathbb{R})$ and $I(\mathbb{R})^n$, respectively, but we simply use the symbol `${\lVert \cdot \rVert}$' to denote the usual Euclidean norm on $\mathbb{R}^n$.

\begin{dfn}(\emph{Sequence in $I(\mathbb{R})^n$}). A function $\widehat{\textbf{G}}:\mathbb{N} \rightarrow I(\mathbb{R})^n$ is called sequence in $I(\mathbb{R})^n$.
\end{dfn}
\begin{dfn}(\emph{Bounded sequence in $I(\mathbb{R})^n$}). A sequence $\left\{\widehat{\textbf{G}}_k\right\}$ in $I(\mathbb{R})^n$ is said to be bounded from below  (above) if there exists an $\widehat{\textbf{A}}\in I(\mathbb{R})^n$ (a $\widehat{\textbf{B}}\in I(\mathbb{R})^n$) such that
\[
\widehat{\textbf{A}}\preceq\widehat{\textbf{G}}_k ~ \text{for all} ~n\in \mathbb{N} ~(\widehat{\textbf{G}}_k\preceq\widehat{\textbf{B}}  ~\text{for all} ~n\in \mathbb{N}),
\]
where for any two elements $\widehat{\textbf{C}}=\left(\textbf{C}_{1}, \textbf{C}_{2}, \cdots, \textbf{C}_{n}\right)$ and $\widehat{\textbf{D}}=\left(\textbf{D}_{1}, \textbf{D}_{2}, \cdots, \textbf{D}_{n}\right)$ in $I(\mathbb{R})^n$,
\[
\widehat{\textbf{C}}\preceq\widehat{\textbf{D}}\Longleftrightarrow \textbf{C}_i\preceq\textbf{D}_i \text{ for all } i=1, 2, \cdots, n.
\]
A sequence $\left\{\widehat{\textbf{G}}_k\right\}$ that is both bounded below and above is called a bounded sequence.
\end{dfn}
\begin{dfn}(\emph{Convergence in $I(\mathbb{R})^n$}).
A sequence $\left\{\widehat{\textbf{G}}_k\right\}$ in $I(\mathbb{R})^n$ is said to be convergent if there exists a $\widehat{\textbf{G}}\in I(\mathbb{R})^n$ such that
\[\lVert  \widehat{\textbf{G}}_k \ominus_{gH} \widehat{\textbf{G}}  \rVert_{I(\mathbb{R})^n} \to 0~\text{as}~k \to \infty .\]
\end{dfn}
\begin{rmrk}\label{ncs}
It is noteworthy that if a sequence $\left\{\widehat{\textbf{G}}_k\right\}$ in $I(\mathbb{R})^n$, where $\widehat{\textbf{G}}_k = (\textbf{G}_{k1}, \textbf{G}_{k2}, \cdots,$ $\textbf{G}_{kn})$, converges to $\widehat{\textbf{G}}=\left(\textbf{G}_{1}, \textbf{G}_{2}, \cdots, \textbf{G}_{n}\right) \in I(\mathbb{R})^n$, then according to Definition \ref{daoirn} and Definition \ref{irnnorm}, corresponding each sequence $\left\{\textbf{G}_{ki}\right\}$ in $I(\mathbb{R})$ converges to $\textbf{G}_i \in I(\mathbb{R})$ for all $i= 1, 2, \cdots, n $. Also, due to Definition \ref{irnorm}, the sequences $\left\{\underline{g_{ki}}\right\}$ and $\left\{\overline{g_{ki}}\right\}$ in $\mathbb{R}$ converge to $\underline{g_i}$ and $\overline{g_i}$, respectively, for all $i$.
\end{rmrk}

\subsection{Convexity and Calculus of IVFs}

A function $\textbf{F}$ from a nonempty subset $\mathcal{X}$ of $\mathbb{R}^n$ to $I(\mathbb{R})$ is known as an IVF (interval-valued function). For each argument point $x \in \mathcal{X}$, the value of $\textbf{F}$ is presented by
\[
\textbf{F}(x)=\left[\underline{f}(x),\
\overline{f}(x)\right],
\]
where $\underline{f}(x)$ and $\overline{f}(x)$ are real valued functions on $\mathcal{X}$ such that $\underline{f}(x)\leq \overline{f}(x)$ for all $x\in\mathcal{X}$.\\

In \cite{Wu2007}, Wu introduced two types of convexity for IVF, i.e., `LU-convexity' and `UC-convexity'. However, in this article, we only use LU-convexity for IVF and we read an LU-convex IVF as simply a convex IVF, which is defined follows.
\begin{dfn}(\emph{Convex IVF} \cite{Wu2007})\label{dcivf}. Let $\mathcal{X} \subseteq \mathbb{R}^n$ be a convex set. An IVF $\textbf{F}: \mathcal{X} \rightarrow I(\mathbb{R})$ is said to be a convex IVF if for any two vectors $x_1$ and $x_2$ in $\mathcal{X}$,
\[
\textbf{F}(\lambda_1 x_1+\lambda_2 x_2)\preceq
\lambda_1\odot\textbf{F}(x_1)\oplus\lambda_2\odot\textbf{F}(x_2)
\]
for all $\lambda_1,~\lambda_2\in[0,\ 1]$ with $\lambda_1+\lambda_2=1$.
\end{dfn}
It is notable that in Definition \ref{dcivf}, we have used the notation `$\preceq$' instead of `$\preceq_{LC}$'. Because the ordering relation `$\preceq_{LC}$' provided in \cite{Wu2007} is same as the ordering relation `$\preceq$'.
\begin{lem}\label{lc1}(See \cite{Wu2007}).
$\textbf{F}$ is convex if and only if $\underline{f}$
and $\overline{f}$ are convex.
\end{lem}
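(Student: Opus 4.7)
The plan is to unpack both sides of the convexity inequality using the explicit formula for the $\preceq$-ordering together with the scalar multiplication and addition rules on $I(\mathbb{R})$, and observe that the interval inequality decouples into two scalar inequalities — one for $\underline{f}$ and one for $\overline{f}$ — which are exactly the convexity conditions for the endpoint functions.

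More concretely, I would first fix arbitrary $x_1, x_2 \in \mathcal{X}$ and $\lambda_1, \lambda_2 \in [0,1]$ with $\lambda_1+\lambda_2 = 1$. Since $\lambda_1, \lambda_2 \geq 0$, the Moore product $\lambda_i \odot \textbf{F}(x_i)$ collapses to
\[
\lambda_i \odot \textbf{F}(x_i) = \bigl[\lambda_i\,\underline{f}(x_i),\; \lambda_i\,\overline{f}(x_i)\bigr],
\]
so that, by the definition of $\oplus$,
\[
\lambda_1 \odot \textbf{F}(x_1) \oplus \lambda_2 \odot \textbf{F}(x_2) = \bigl[\lambda_1\underline{f}(x_1)+\lambda_2\underline{f}(x_2),\; \lambda_1\overline{f}(x_1)+\lambda_2\overline{f}(x_2)\bigr].
\]
On the other hand, $\textbf{F}(\lambda_1 x_1+\lambda_2 x_2) = [\underline{f}(\lambda_1 x_1+\lambda_2 x_2),\; \overline{f}(\lambda_1 x_1+\lambda_2 x_2)]$.

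Next, I invoke the definition of the dominance relation $\preceq$ to convert the interval inequality
\[
\textbf{F}(\lambda_1 x_1+\lambda_2 x_2) \preceq \lambda_1 \odot \textbf{F}(x_1) \oplus \lambda_2 \odot \textbf{F}(x_2)
\]
into the pair of scalar inequalities
\[
\underline{f}(\lambda_1 x_1+\lambda_2 x_2) \leq \lambda_1\underline{f}(x_1)+\lambda_2\underline{f}(x_2),\qquad \overline{f}(\lambda_1 x_1+\lambda_2 x_2) \leq \lambda_1\overline{f}(x_1)+\lambda_2\overline{f}(x_2).
\]
These are precisely the convexity conditions for $\underline{f}$ and $\overline{f}$ at $(x_1, x_2, \lambda_1, \lambda_2)$, so both directions of the equivalence follow by reading the chain of equivalences forward and backward over all admissible $x_1, x_2, \lambda_1, \lambda_2$.

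There is no real obstacle here; the statement is essentially a bookkeeping result. The only mild point of care is ensuring the scalar multiplication formula $\lambda \odot [\underline{a}, \overline{a}] = [\lambda \underline{a}, \lambda \overline{a}]$ genuinely holds, which relies on $\lambda \geq 0$ — and that is guaranteed by $\lambda_1, \lambda_2 \in [0,1]$. After that, every step is an ``if and only if,'' so the equivalence is immediate.
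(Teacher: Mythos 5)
Your proposal is correct: since $\lambda_1,\lambda_2\ge 0$, the operations $\odot$ and $\oplus$ act componentwise on the endpoints, and the relation $\preceq$ is by definition the conjunction of the two endpoint inequalities, so the interval convexity condition is literally equivalent to convexity of $\underline{f}$ and $\overline{f}$. The paper gives no proof of its own (it cites Wu, 2007), and your componentwise decoupling is exactly the standard argument behind that cited result.
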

\begin{dfn}(\emph{$gH$-continuity} \cite{Ghosh2016newton, Markov1979}).
Let $\textbf{F}$ be an IVF on a nonempty subset $\mathcal{X}$ of $\mathbb{R}^n$. Let $\bar{x}$ be an interior point of $\mathcal{X}$
and $d\in\mathbb{R}^n$ be such that $\bar{x}+d\in\mathcal{X}$. The function
$\textbf{F}$ is said to be a $gH$-continuous at $\bar{x}$ if
\[
\lim_{\lVert d \rVert\rightarrow 0}\left(\textbf{F}(\bar{x}+d)\ominus_{gH}\textbf{F}(\bar{x})\right)=\textbf{0}.
\]
\end{dfn}
\begin{lem}\label{lc2}
An IVF $\textbf{F}$ on a nonempty subset $\mathcal{X}$ of $\mathbb{R}^n$ is $gH$-continuous if and only if
$\underline{f}$ and $\overline{f}$ are continuous.
\end{lem}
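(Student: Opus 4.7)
The plan is to unpack the definition of $gH$-continuity using the explicit endpoint formula for $\ominus_{gH}$, and then show that convergence of the resulting interval to $\textbf{0}$ in the $I(\mathbb{R})$-norm is equivalent to componentwise convergence of the endpoints, which by definition is the continuity of $\underline{f}$ and $\overline{f}$.

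Concretely, fix an interior point $\bar{x}\in\mathcal{X}$ and a direction $d$ with $\bar{x}+d\in\mathcal{X}$. Write
\[
a(d)=\underline{f}(\bar{x}+d)-\underline{f}(\bar{x}),\quad b(d)=\overline{f}(\bar{x}+d)-\overline{f}(\bar{x}).
\]
Applying the formula for the $gH$-difference recalled in Section~\ref{ssai}, I get
\[
\textbf{F}(\bar{x}+d)\ominus_{gH}\textbf{F}(\bar{x})=\bigl[\min\{a(d),b(d)\},\ \max\{a(d),b(d)\}\bigr],
\]
and then, by Definition~\ref{irnorm},
\[
\bigl\lVert \textbf{F}(\bar{x}+d)\ominus_{gH}\textbf{F}(\bar{x})\bigr\rVert_{I(\mathbb{R})}=\max\bigl\{\lvert\min\{a(d),b(d)\}\rvert,\lvert\max\{a(d),b(d)\}\rvert\bigr\}=\max\{\lvert a(d)\rvert,\lvert b(d)\rvert\}.
\]

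For the forward direction, assume $\textbf{F}$ is $gH$-continuous at $\bar{x}$, i.e.\ the interval above tends to $\textbf{0}$ as $\lVert d\rVert\to 0$; equivalently its norm tends to $0$. Since $\max\{\lvert a(d)\rvert,\lvert b(d)\rvert\}\to 0$ forces both $\lvert a(d)\rvert\to 0$ and $\lvert b(d)\rvert\to 0$, it follows that $\underline{f}(\bar{x}+d)\to \underline{f}(\bar{x})$ and $\overline{f}(\bar{x}+d)\to \overline{f}(\bar{x})$, so $\underline{f}$ and $\overline{f}$ are continuous at $\bar{x}$. For the converse, assume $\underline{f}$ and $\overline{f}$ are continuous at $\bar{x}$; then $a(d), b(d)\to 0$, hence the norm computed above tends to $0$, i.e.\ the interval $\textbf{F}(\bar{x}+d)\ominus_{gH}\textbf{F}(\bar{x})$ converges to $\textbf{0}$, giving $gH$-continuity. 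Since $\bar{x}$ was arbitrary, the equivalence holds on all of $\mathcal{X}$.

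There is no real obstacle here; the only care needed is the min/max bookkeeping, which collapses cleanly because $\max\{\lvert\min\{a,b\}\rvert,\lvert\max\{a,b\}\rvert\}=\max\{\lvert a\rvert,\lvert b\rvert\}$ for any reals $a,b$. This reduces the lemma to the elementary fact that a pair of real sequences tends to $0$ iff their maximum absolute value does, which is exactly why $gH$-continuity behaves componentwise on the endpoint functions.
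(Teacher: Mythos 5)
Your proof is correct and follows essentially the same route as the paper's: unpack the $gH$-difference into the endpoint differences $a(d)$ and $b(d)$ and observe that the interval tends to $\textbf{0}$ exactly when both tend to $0$. The only cosmetic differences are that you make the norm computation explicit via the identity $\max\{\lvert\min\{a,b\}\rvert,\lvert\max\{a,b\}\rvert\}=\max\{\lvert a\rvert,\lvert b\rvert\}$ and prove the converse directly, whereas the paper argues that direction by contradiction.
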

\begin{proof}
Please see \ref{aplc2}.
\end{proof}
\begin{thm}\label{tc}
If an IVF $\textbf{F}$ on a nonempty open convex subset $\mathcal{X}$ of $\mathbb{R}^n$ is convex, then $\textbf{F}$ is $gH$-continuous on $\mathcal{X}$.
\end{thm}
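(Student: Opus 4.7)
The plan is to reduce the $gH$-continuity of $\textbf{F}$ to the ordinary continuity of its endpoint functions $\underline{f}$ and $\overline{f}$, and then invoke the classical fact that a real-valued convex function on an open convex subset of $\mathbb{R}^n$ is continuous on that set.

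First I would invoke Lemma \ref{lc1}: since $\textbf{F}:\mathcal{X}\to I(\mathbb{R})$ is convex, both the lower boundary function $\underline{f}:\mathcal{X}\to\mathbb{R}$ and the upper boundary function $\overline{f}:\mathcal{X}\to\mathbb{R}$ are convex real-valued functions on the open convex set $\mathcal{X}\subseteq\mathbb{R}^n$.

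Next I would apply the standard theorem from convex analysis (e.g.\ Rockafellar): a convex function defined on an open convex subset of $\mathbb{R}^n$ is continuous on that subset. Consequently, $\underline{f}$ and $\overline{f}$ are both continuous on $\mathcal{X}$. With the continuity of the endpoint functions in hand, the conclusion follows immediately from Lemma \ref{lc2}, which says that an IVF is $gH$-continuous on $\mathcal{X}$ if and only if its lower and upper boundary functions are continuous on $\mathcal{X}$. Hence $\textbf{F}$ is $gH$-continuous on $\mathcal{X}$.

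There is no real obstacle in this argument; the only point that must not be overlooked is the openness of $\mathcal{X}$, which is essential because a convex function on a non-open convex set can be discontinuous at boundary points. Under the stated hypothesis this issue does not arise, and the chain Lemma \ref{lc1} $\Rightarrow$ classical continuity of convex functions $\Rightarrow$ Lemma \ref{lc2} completes the proof.
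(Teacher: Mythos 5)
Your proof is correct and follows exactly the same route as the paper's: Lemma \ref{lc1} to get convexity of $\underline{f}$ and $\overline{f}$, the classical continuity of real convex functions on open convex sets, and then Lemma \ref{lc2} to conclude $gH$-continuity. Your explicit remark about why openness of $\mathcal{X}$ matters is a welcome addition that the paper's proof leaves implicit.
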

\begin{proof}
Please see \ref{aptc}.
\end{proof}
\begin{dfn}(\emph{$gH$-Lipschitz continuous IVF} \cite{Ghosh2019derivative}). Let $\mathcal{X}\subseteq \mathbb{R}^n$. An IVF  $\textbf{F}: \mathcal{X} \rightarrow I(\mathbb{R})$ is said to be $gH$-Lipschitz continuous on $\mathcal{X}$ if there exists $L~>~0 $ such that
\[ {\lVert \textbf{F}(x) \ominus_{gH} \textbf{F}(y) \rVert }_{I(\mathbb{R})} \le L {\lVert x-y \rVert} ~\text{for all}~x,y \in \mathcal{X}. \]
The constant $L$ is called a Lipschitz constant.
\end{dfn}
\begin{dfn}(\emph{$gH$-derivative} \cite{Markov1979, Stefanini2009}).
Let $\mathcal{X}\subseteq \mathbb{R}$. The $gH$-derivative of an IVF $\textbf{F}:\mathcal{X} \rightarrow I(\mathbb{R})$ at $\bar{x}\in \mathcal{X}$ is defined by
\[
\textbf{F}'(\bar{x})=\displaystyle\lim_{d\rightarrow 0} \frac{\textbf{F}(\bar{x}+d) \ominus_{gH} \textbf{F}(\bar{x})}{d},~\text{provided the limit exists.}
\]
\end{dfn}
\begin{rmrk} (See \cite{Chalco2011, Markov1979}).\label{rd1}
Let $\mathcal{X}$ be a nonempty subset of $\mathbb{R}$. The $gH$-derivative of an IVF $\textbf{F}:\mathcal{X} \rightarrow I(\mathbb{R})$ at $\bar{x}\in \mathcal{X}$ exists if the derivatives of $\underline{f}$ and $\overline{f}$ at $\bar{x}$ exist and
\[
\textbf{F}'(\bar{x})=\left[\min\left\{\underline{f}'(\bar{x}), \overline{f}'(\bar{x})\right\}, \max\left\{\underline{f}'(\bar{x}), \overline{f}'(\bar{x})\right\}\right].
\]
However, the converse is not true.
\end{rmrk}
\begin{dfn}\label{pdgh}(\emph{Partial $gH$-derivative} \cite{Chalco2013kkt, Ghosh2016newton}).
Let $\textbf{F}:\mathcal{X} \rightarrow I(\mathbb{R})$ be an IVF, where $\mathcal{X}$ is a nonempty subset of $\mathbb{R}^n$. We define a function $\textbf{G}_i$ by
\[
\textbf{G}_i (x_i) = \textbf{F} (\bar{x}_1, \bar{x}_2, \cdots, \bar{x}_{i-1}, x_i, \bar{x}_{i+1}, \cdots, \bar{x}_n),
\]
where $\bar{x} = (\bar{x}_1,\, \bar{x}_2,\, \cdots,\, \bar{x}_n)^T\in\mathcal{X}$. If the $gH$-derivative of $\textbf{G}_i$ exists at $\bar{x}_i$, then the $i$-th partial $gH$-derivative of $\textbf{F}$ at $\bar{x}$, denoted $D_i \textbf{F}(\bar{x})$, is defined by
\[
D_i \textbf{F}(\bar{x})=\textbf{G}'_i (\bar{x}_i)~\text{for all}~i = 1,\, 2,\, \cdots,\, n.
\]
\end{dfn}
\begin{dfn} (\emph{$gH$-gradient} \cite{Chalco2013kkt, Ghosh2016newton}).
Let $\mathcal{X}$ be a nonempty subset of $\mathbb{R}^n$. The $gH$-gradient of an IVF $\textbf{F}:\mathcal{X} \rightarrow I(\mathbb{R})$ at a point $\bar{x} \in \mathcal{X}$, denoted $\nabla \textbf{F} (\bar{x})$, is defined by
\[
\nabla \textbf{F} (\bar{x})=\left(   D_1\textbf{F}(\bar{x}),\,
         D_2\textbf{F}(\bar{x}),\, \cdots,\,
         D_n\textbf{F}(\bar{x})
\right)^T.
\]
\end{dfn}
\begin{dfn}\label{ddd}
(\emph{Directional $gH$-derivative} \cite{Bao2016, Stefanini2019}).
Let $\textbf{F}$ be an IVF on a nonempty subset $\mathcal{X}$ of $\mathbb{R}^n$. Let $\bar{x} \in \mathcal{X}$ and $h \in \mathbb{R}^n$ such that $\bar{x}+\lambda h\in \mathcal{X}$ for any small $\lambda$. If the limit
\[
\lim_{\lambda \to 0+}\frac{1}{\lambda}\odot\left(\textbf{F}(\bar{x}+\lambda h)\ominus_{gH}\textbf{F}(\bar{x})\right)
\]
exists, then the limit is said to be directional $gH$-derivative of $\textbf{F}$ at $\bar{x}$ in the direction $h$, and it is denoted by $\textbf{F}'(\bar{x})(h)$.
\end{dfn}
\begin{dfn}(\emph{Linear IVF} \cite{Ghosh2019derivative}). \label{dlivf}
Let $\mathcal{Y}$ be a linear subspace of $\mathbb{R}^n$. The function $\textbf{L}: \mathcal{Y} \rightarrow I(\mathbb{R})$ is said to be linear if
\begin{enumerate}[(i)]
\item\label{cl1} $\textbf{L}(\lambda x)=\lambda\odot\textbf{L}(x)~ \text{for all}~x\in \mathcal{X}~\text{and for all}~\lambda \in \mathbb{R}$,
\item\label{cl2} for all $x,~y\in \mathcal{Y}$, either
\[
\textbf{L}(x)\oplus\textbf{L}(y) = \textbf{L}(x+y)
\]
 or none of $\textbf{L}(x)\oplus\textbf{L}(y)$ and $\textbf{L}(x+y)$ dominates the other.
\end{enumerate}
\end{dfn}
\begin{rmrk} (See \cite{Ghosh2019derivative}).\label{nlivf}
The IVF $\textbf{L}: \mathbb{R}^n \rightarrow I(\mathbb{R})$ that is defined by
\[
\textbf{L}(x)=d^T\odot\widehat{\textbf{A}}=\bigoplus_{i=1}^n x_i \odot \textbf{A}_i=\bigoplus_{i=1}^n x_i \odot [\underline{a}_i, \overline{a}_i]
\]
is a linear IVF, where `$\bigoplus_{i=1}^n$' denotes successive addition of $n$ number of intervals.
\end{rmrk}
We denote the set of all $gH$-continuous linear IVF on a linear space $\mathcal{Y}\subset \mathbb{R}^n$ as $\widehat{\mathcal{Y}}$.
\begin{dfn}\label{kkk}
(\emph{Bounded linear interval-valued operator} \cite{Ghosh2019derivative}). Let $\mathcal{Y}$ be a real normed space. A linear IVF  $\mathbf{L}: \mathcal{Y} \rightarrow I(\mathbb{R})$ is said to be a bounded linear operator if there exists $K>0$ such that
\[ {\lVert \mathbf{L}(x) \rVert }_{I(\mathbb{R})} \leq K {\lVert x \rVert}_\mathcal{X} ~\text{for all}~ x \in \mathcal{Y}. \]
\end{dfn}
\begin{lem}(See  \cite{Ghosh2019derivative}).\label{kk60}
Let $\mathcal{Y}$ be a real normed. If a linear IVF $\mathbf{L}: \mathcal{Y} \rightarrow I(\mathbb{R})$ is $gH$-continuous at the zero vector of $\mathcal{Y}$, then it is a bounded linear operator.
\end{lem}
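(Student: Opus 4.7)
The plan is to imitate the classical Banach-space proof that continuity of a linear operator at the origin implies boundedness, translating every step into the interval-valued language provided by the paper. The two ingredients I will need are: (a) $\mathbf{L}(0)=\mathbf{0}$, which follows from property (\ref{cl1}) of Definition \ref{dlivf} with $\lambda=0$ (since $0\odot\mathbf{L}(0)=[0,0]=\mathbf{0}$); and (b) absolute homogeneity of the norm, namely ${\lVert \lambda\odot\mathbf{A}\rVert}_{I(\mathbb{R})}=|\lambda|\,{\lVert \mathbf{A}\rVert}_{I(\mathbb{R})}$ for every $\lambda\in\mathbb{R}$ and $\mathbf{A}\in I(\mathbb{R})$, which follows directly from the formula for scalar multiplication of an interval and Definition \ref{irnorm}.

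Having these tools, I would unfold the hypothesis of $gH$-continuity at the origin: since $\mathbf{L}(0)=\mathbf{0}$, the condition
\[
\lim_{\lVert d\rVert\to 0}\bigl(\mathbf{L}(0+d)\ominus_{gH}\mathbf{L}(0)\bigr)=\mathbf{0}
\]
reduces to $\mathbf{L}(d)\to\mathbf{0}$ as $\lVert d\rVert\to 0$, i.e., ${\lVert\mathbf{L}(d)\rVert}_{I(\mathbb{R})}\to 0$. Specialising to $\varepsilon=1$, I obtain a $\delta>0$ such that ${\lVert\mathbf{L}(d)\rVert}_{I(\mathbb{R})}\le 1$ whenever $\lVert d\rVert_{\mathcal{Y}}\le\delta$.

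Next, for any nonzero $x\in\mathcal{Y}$ I would set $d=\dfrac{\delta}{\lVert x\rVert_{\mathcal{Y}}}\,x$, so that $\lVert d\rVert_{\mathcal{Y}}=\delta$. Using property (\ref{cl1}) of Definition \ref{dlivf},
\[
\mathbf{L}(d)=\frac{\delta}{\lVert x\rVert_{\mathcal{Y}}}\odot\mathbf{L}(x),
\]
and taking norms on both sides (via ingredient (b) above) yields
\[
\frac{\delta}{\lVert x\rVert_{\mathcal{Y}}}\,{\lVert\mathbf{L}(x)\rVert}_{I(\mathbb{R})}={\lVert\mathbf{L}(d)\rVert}_{I(\mathbb{R})}\le 1,
\]
hence ${\lVert\mathbf{L}(x)\rVert}_{I(\mathbb{R})}\le K\,\lVert x\rVert_{\mathcal{Y}}$ with $K=1/\delta$. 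The case $x=0$ is handled separately by $\mathbf{L}(0)=\mathbf{0}$, and this establishes boundedness in the sense of Definition \ref{kkk}.

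I do not foresee a genuine obstacle here; the argument is essentially mechanical once ingredients (a) and (b) are in place. The only point that deserves care is that $\mathbf{L}(0)=\mathbf{0}$ cannot be taken for granted from the additive clause (\ref{cl2}) of Definition \ref{dlivf}, since that clause has a disjunctive form (``either $\mathbf{L}(x+y)=\mathbf{L}(x)\oplus\mathbf{L}(y)$ or the two are incomparable''); therefore I would deduce $\mathbf{L}(0)=\mathbf{0}$ solely from the homogeneity clause by taking $\lambda=0$. Everything else is the standard ``scale to the boundary of the ball'' trick, and the interval norm cooperates because scalar multiplication distributes cleanly through it.
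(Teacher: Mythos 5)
Your proof is correct. The paper itself does not prove this lemma --- it is imported verbatim from \cite{Ghosh2019derivative} --- so there is no in-text argument to compare against, but your reasoning is the standard ``scale to the boundary of the $\delta$-ball'' argument, and every interval-specific ingredient checks out: $\mathbf{L}(0)=\mathbf{0}$ does follow from the homogeneity clause with $\lambda=0$, $\mathbf{A}\ominus_{gH}\mathbf{0}=\mathbf{A}$ makes the continuity hypothesis collapse to ${\lVert\mathbf{L}(d)\rVert}_{I(\mathbb{R})}\to 0$, and ${\lVert\lambda\odot\mathbf{A}\rVert}_{I(\mathbb{R})}=|\lambda|\,{\lVert\mathbf{A}\rVert}_{I(\mathbb{R})}$ holds for the max-of-endpoints norm, so the rescaling step goes through exactly as in the real-valued case.
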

The authors of \cite{Stefanini2019} provided the definition of $gH$-differentiability for IVFs using the midpoint-radius representation $\left[\frac{\overline{f}+\underline{f}}{2}, \frac{\overline{f}-\underline{f}}{2}\right]$ of an IVF $\textbf{F}$. However, as our main intention in this article is to illustrate all the things regarding IVF whether its lower boundary function $\underline{f}$ and upper boundary function $\overline{f}$ are readily available or not, we consider the Proposition $7$ of \cite{Stefanini2019} as the definition of $gH$-differentiability for IVFs, which is as follows.
\begin{dfn}\label{dghd} (\emph{$gH$-differentiability}).
Let $\mathcal{X}$ be a nonempty subset of $\mathbb{R}^n$. An IVF $\textbf{F}:\mathcal{X} \rightarrow I(\mathbb{R})$ is said to be $gH$-differentiable at a point $\bar{x} \in \mathcal{X}$ if there exists an IVF $\textbf{L}_{\bar{x}}(d)=d^T\odot\widehat{\textbf{A}}$, where $d\in \mathbb{R}^n$ and $\widehat{\textbf{A}}\in I(\mathbb{R})^n$, an IVF $\textbf{E}(\textbf{F}(\bar{x});d)$ and a $\delta~>~0$ such that
\[
\left(\textbf{F}(\bar{x}+d)\ominus_{gH} \textbf{F}(\bar{x})\right)=\textbf{L}_{\bar{x}}(d)\oplus\lVert d \rVert \odot \textbf{E}(\textbf{F}(\bar{x});d)~\text{for all}~d~\text{with}~\lVert d \rVert~<~ \delta,
\]
where $\textbf{E}(\textbf{F}(\bar{x});d)\rightarrow \textbf{0}$ as $\lVert d \rVert\rightarrow 0$.
\end{dfn}
\begin{thm}\emph{(See \cite{Stefanini2019})}.\label{td1}
Let an IVF $\textbf{F}$ on a nonempty subset $\mathcal{X}$ of $\mathbb{R}^n$ be $gH$-differentiable at $\bar{x}\in \mathcal{X}$. Then, for each $d = (d_1, d_2, \cdots, d_n)^T \in \mathbb{R}^n$, the $gH$-gradient of $\textbf{F}$ at $\bar{x}$ exists and the IVF $\textbf{L}_{\bar{x}}$ in Definition \ref{dghd} can be expressed by
\begin{equation}\label{edf3}
\textbf{L}_{\bar{x}}(d) = d^T \odot \nabla\textbf{F}(\bar{x})=\bigoplus_{i=1}^n d_i \odot D_i\textbf{F}(\bar{x}).
\end{equation}
\end{thm}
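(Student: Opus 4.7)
The plan is to reduce the vector-valued differentiability hypothesis to $n$ one-dimensional $gH$-derivative computations by probing the direction along each coordinate axis. Write $\widehat{\textbf{A}}=(\textbf{A}_1,\textbf{A}_2,\ldots,\textbf{A}_n)$ for the $n$-tuple appearing in Definition \ref{dghd}, and fix an index $i\in\{1,2,\ldots,n\}$. First I would specialise the $gH$-differentiability identity to $d=te_i$, where $e_i$ denotes the $i$-th standard unit vector and $t\in\mathbb{R}$ is small enough that $\lVert te_i\rVert=|t|<\delta$. Using Remark \ref{nlivf} together with $\textbf{0}\odot\textbf{A}_j=\textbf{0}$ for $j\neq i$, the linear part collapses to $(te_i)^T\odot\widehat{\textbf{A}}=t\odot\textbf{A}_i$, which gives
\[
\textbf{F}(\bar{x}+te_i)\ominus_{gH}\textbf{F}(\bar{x})= t\odot\textbf{A}_i\oplus|t|\odot\textbf{E}(\textbf{F}(\bar{x});te_i).
\]

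Next, recalling the univariate function $\textbf{G}_i(x_i)=\textbf{F}(\bar{x}_1,\ldots,x_i,\ldots,\bar{x}_n)$ from Definition \ref{pdgh}, divide the displayed equality by $t$ (for $t\neq 0$). The first summand becomes $\textbf{A}_i$, while the second becomes $\frac{|t|}{t}\odot\textbf{E}(\textbf{F}(\bar{x});te_i)$, whose norm in $I(\mathbb{R})$ is bounded by $\lVert\textbf{E}(\textbf{F}(\bar{x});te_i)\rVert_{I(\mathbb{R})}$ and therefore vanishes as $t\to 0$ by the $gH$-differentiability hypothesis. Passing to the limit $t\to 0$ shows that $\textbf{G}_i'(\bar{x}_i)$ exists and equals $\textbf{A}_i$, i.e.\ $D_i\textbf{F}(\bar{x})=\textbf{A}_i$.

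Since $i$ was arbitrary, every coordinate of $\widehat{\textbf{A}}$ coincides with the corresponding partial $gH$-derivative, so the $gH$-gradient $\nabla\textbf{F}(\bar{x})$ exists and satisfies $\widehat{\textbf{A}}=\nabla\textbf{F}(\bar{x})$. Substituting back into $\textbf{L}_{\bar{x}}(d)=d^T\odot\widehat{\textbf{A}}$ and expanding by Remark \ref{nlivf} then yields the claimed identity \eqref{edf3}.

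The main technical point to handle carefully is the step where the error term is divided by $t$: one must argue that the pointwise limit $\textbf{E}(\textbf{F}(\bar{x});te_i)\to\textbf{0}$ survives multiplication by the bounded quantity $|t|/t\in\{-1,+1\}$ and that the $gH$-difference on the left-hand side is compatible with dividing by a real scalar. Both facts follow from Definition \ref{daoirn} and Remark \ref{rd1}, but the sign ambiguity forces us to treat $t>0$ and $t<0$ together via the norm estimate rather than via a direct algebraic cancellation; this is the only non-routine bookkeeping needed in the argument.
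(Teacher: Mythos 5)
The paper itself gives no proof of Theorem \ref{td1}; it is imported verbatim from \cite{Stefanini2019}, so there is nothing internal to compare against. Your argument is correct and is the standard one: specialising Definition \ref{dghd} to $d=te_i$, using distributivity of scalar multiplication over $\oplus$ to divide by $t$, and controlling the error term via $\lvert t\rvert/t=\pm 1$ together with the fact that $(\textbf{A}_i\oplus\textbf{C})\ominus_{gH}\textbf{A}_i=\textbf{C}$, which reduces the limit to $\lVert\textbf{E}(\textbf{F}(\bar{x});te_i)\rVert_{I(\mathbb{R})}\to 0$. This correctly identifies $\textbf{A}_i$ with $D_i\textbf{F}(\bar{x})$ in the sense of Definition \ref{pdgh} and hence recovers \eqref{edf3}; the only cosmetic slip is writing $\textbf{0}\odot\textbf{A}_j$ where the scalar $0\odot\textbf{A}_j=\textbf{0}$ is meant.
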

\begin{thm}\emph{(See \cite{Stefanini2019})}.\label{td}
Let an IVF $\textbf{F}$ on a nonempty subset $\mathcal{X}$ of $\mathbb{R}^n$ be $gH$-differentiable at $\bar{x}\in \mathcal{X}$. Then, $\textbf{F}$ has directional $gH$-derivative at $\bar{x}$ for every direction $d\in \mathbb{R}^n$ and
\[
\textbf{F}'(\bar{x})(d)= d^T \odot \nabla\textbf{F}(\bar{x})=\bigoplus_{i=1}^n d_i \odot D_i\textbf{F}(\bar{x})~\text{for all}~d\in \mathbb{R}^n.
\]
\end{thm}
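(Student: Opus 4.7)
The plan is to specialise Definition \ref{dghd} to the increment $d = \lambda h$, divide by the positive scalar $\lambda$, and let $\lambda\to 0^+$, so that the error term vanishes and the linear part is exactly the claimed directional derivative. By Definition \ref{dghd} combined with Theorem \ref{td1}, there exist $\delta > 0$ and an IVF error term $\textbf{E}(\textbf{F}(\bar{x});\cdot)$ with $\textbf{E}(\textbf{F}(\bar{x});d)\to\textbf{0}$ as $\lVert d\rVert\to 0$ such that
\begin{equation*}
\textbf{F}(\bar{x}+d)\ominus_{gH}\textbf{F}(\bar{x}) \;=\; \bigoplus_{i=1}^n d_i\odot D_i\textbf{F}(\bar{x})\;\oplus\;\lVert d\rVert\odot\textbf{E}(\textbf{F}(\bar{x});d)
\end{equation*}
whenever $\lVert d\rVert<\delta$. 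The case $h=\mathbf{0}$ is immediate since $\textbf{F}(\bar{x})\ominus_{gH}\textbf{F}(\bar{x})=\textbf{0}$, so I fix a nonzero $h\in\mathbb{R}^n$.

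For $0<\lambda<\delta/\lVert h\rVert$ I would substitute $d=\lambda h$ in the identity above. Scalar homogeneity of the linear IVF (item (i) of Definition \ref{dlivf}, cf.\ Remark \ref{nlivf}) yields $\bigoplus_{i=1}^n(\lambda h_i)\odot D_i\textbf{F}(\bar{x}) = \lambda\odot\bigoplus_{i=1}^n h_i\odot D_i\textbf{F}(\bar{x})$, and $\lVert\lambda h\rVert=\lambda\lVert h\rVert$. Multiplying both sides by $1/\lambda>0$ and using that positive-scalar multiplication distributes across both $\oplus$ and $\ominus_{gH}$ at the level of endpoints, I obtain
\begin{equation*}
\tfrac{1}{\lambda}\odot\bigl(\textbf{F}(\bar{x}+\lambda h)\ominus_{gH}\textbf{F}(\bar{x})\bigr) \;=\; \bigoplus_{i=1}^n h_i\odot D_i\textbf{F}(\bar{x})\;\oplus\;\lVert h\rVert\odot\textbf{E}(\textbf{F}(\bar{x});\lambda h).
\end{equation*}
As $\lambda\to 0^+$, $\lVert\lambda h\rVert\to 0$, so $\textbf{E}(\textbf{F}(\bar{x});\lambda h)\to\textbf{0}$ in ${\lVert\cdot\rVert}_{I(\mathbb{R})}$, whence the second summand on the right vanishes. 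By Definition \ref{ddd} the limit of the left-hand side is exactly $\textbf{F}'(\bar{x})(h)$, and equals $\bigoplus_{i=1}^n h_i\odot D_i\textbf{F}(\bar{x}) = h^T\odot\nabla\textbf{F}(\bar{x})$, as required.

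The main technical obstacle to justify carefully is the distributivity of the positive-scalar multiplication $\frac{1}{\lambda}\odot$ across both $\oplus$ and $\ominus_{gH}$, which is what lets me cancel $\lambda$ simultaneously against the homogeneous linear term and against the norm prefactor $\lVert\lambda h\rVert$ in the error. This follows from the endpoint formulas in Section \ref{ssai}: for $\mu>0$ one has $\mu\odot[\underline{a},\overline{a}]=[\mu\underline{a},\mu\overline{a}]$, so $\mu$ passes cleanly through $\oplus$ (which simply adds endpoints) and through $\ominus_{gH}$ (which takes the min and max of endpoint differences) without any reshuffling of the $\min$/$\max$. The remaining ingredient, that $\textbf{E}\to\textbf{0}$ implies $\lVert h\rVert\odot\textbf{E}\to\textbf{0}$ in ${\lVert\cdot\rVert}_{I(\mathbb{R})}$, is immediate from Definition \ref{irnorm}.
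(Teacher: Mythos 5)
The paper offers no proof of Theorem \ref{td}: it is imported verbatim from Stefanini and Arana-Jim\'enez \cite{Stefanini2019}, so there is no in-paper argument to compare yours against. Your proposal is correct and is the natural first-principles derivation: substitute $d=\lambda h$ into the defining identity of Definition \ref{dghd}, invoke Theorem \ref{td1} to identify $\textbf{L}_{\bar{x}}$ with $d^T\odot\nabla\textbf{F}(\bar{x})$, cancel $\lambda$ using positive-scalar homogeneity and distributivity over $\oplus$ (both of which do hold at the level of endpoints, exactly as you note), and pass to the limit $\lambda\to 0^+$ using that $\oplus$ is continuous with respect to ${\lVert\cdot\rVert}_{I(\mathbb{R})}$ so the error summand $\lVert h\rVert\odot\textbf{E}(\textbf{F}(\bar{x});\lambda h)$ vanishes. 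Two minor remarks: the distributivity of $1/\lambda$ across $\ominus_{gH}$ that you flag as the main obstacle is not actually needed, since the left-hand side $\textbf{F}(\bar{x}+\lambda h)\ominus_{gH}\textbf{F}(\bar{x})$ is scaled as a single interval and only the right-hand side's $\oplus$ must be split; and the final limit step tacitly uses that if $\textbf{A}_\lambda\to\textbf{A}$ and $\textbf{A}_\lambda=\textbf{B}\oplus\textbf{C}_\lambda$ with $\textbf{C}_\lambda\to\textbf{0}$ then $\textbf{A}=\textbf{B}$, which is immediate from endpoint convergence (Remark \ref{ncs}) but worth one sentence.
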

\begin{thm}\label{td2}
Let an IVF $\textbf{F}$ on a nonempty open convex subset $\mathcal{X}$ of $\mathbb{R}^n$ be $gH$-differentiable at $x\in\mathcal{X}$. If the function $\textbf{F}$ is convex on $\mathcal{X}$, then
\[
(y-x)^T \odot \nabla\textbf{F}(x)~\preceq~ \textbf{F}(y)\ominus_{gH}\textbf{F}(x) ~ \text{ for all } x,~y\in \mathcal{X}.
\]
\end{thm}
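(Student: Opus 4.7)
My plan is to derive the inequality by starting from the definition of convexity, passing to the $gH$-difference form, and then taking a one-sided limit to produce the directional $gH$-derivative, which by Theorem \ref{td} coincides with $(y-x)^T\odot\nabla\textbf{F}(x)$.

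\smallskip
First I would fix $x,y\in\mathcal{X}$ and, for each $\lambda\in(0,1]$, write $x+\lambda(y-x)=(1-\lambda)x+\lambda y\in\mathcal{X}$. By Definition \ref{dcivf} together with Lemma \ref{lc1}, the real-valued functions $\underline{f}$ and $\overline{f}$ are convex, so
\[
\underline{f}(x+\lambda(y-x))-\underline{f}(x)\le\lambda\bigl(\underline{f}(y)-\underline{f}(x)\bigr),\quad \overline{f}(x+\lambda(y-x))-\overline{f}(x)\le\lambda\bigl(\overline{f}(y)-\overline{f}(x)\bigr).
\]
Taking componentwise minimum on the left and right and, separately, componentwise maximum, and using the explicit formula for $\ominus_{gH}$ recalled in Section \ref{ssai}, I would conclude
\[
\textbf{F}(x+\lambda(y-x))\ominus_{gH}\textbf{F}(x)\ \preceq\ \lambda\odot\bigl(\textbf{F}(y)\ominus_{gH}\textbf{F}(x)\bigr).
\]

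\smallskip
Next, because $\lambda>0$ I can multiply both sides by $\tfrac{1}{\lambda}$, which scales both endpoints of each interval by the same positive factor and thus preserves $\preceq$, giving
\[
\frac{1}{\lambda}\odot\bigl(\textbf{F}(x+\lambda(y-x))\ominus_{gH}\textbf{F}(x)\bigr)\ \preceq\ \textbf{F}(y)\ominus_{gH}\textbf{F}(x)\quad\text{for all }\lambda\in(0,1].
\]
Since $\textbf{F}$ is $gH$-differentiable at $x$, Definition \ref{ddd} and Theorem \ref{td} guarantee that the left-hand side converges, as $\lambda\to 0^+$, to the directional $gH$-derivative $\textbf{F}'(x)(y-x)=(y-x)^T\odot\nabla\textbf{F}(x)$. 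Passing to the limit in the inequality (and using that $\preceq$ is componentwise $\le$ on the lower and upper endpoints, hence preserved under limits of real sequences) yields
\[
(y-x)^T\odot\nabla\textbf{F}(x)\ \preceq\ \textbf{F}(y)\ominus_{gH}\textbf{F}(x),
\]
which is the claim.

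\smallskip
The main technical point, and the step I would expect to need the most care, is the passage from the pair of scalar convexity inequalities for $\underline{f}$ and $\overline{f}$ to the interval inequality involving $\ominus_{gH}$, because $\ominus_{gH}$ of two intervals is defined via $\min$ and $\max$ of two endpoint differences rather than by a fixed assignment to lower/upper endpoints; this has to be handled by checking both candidate orderings of the endpoint differences explicitly. Once that interval inequality is in hand, the division by $\lambda>0$ and the limit $\lambda\to 0^+$ are routine under the hypotheses already in place.
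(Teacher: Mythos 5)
Your proposal is correct and follows essentially the same route as the paper's proof in \ref{aptd2}: establish $\frac{1}{\lambda}\odot\bigl(\textbf{F}(x+\lambda(y-x))\ominus_{gH}\textbf{F}(x)\bigr)\preceq\textbf{F}(y)\ominus_{gH}\textbf{F}(x)$ from convexity, then let $\lambda\to 0^+$ and invoke Theorem \ref{td}. The only cosmetic difference is that you derive the key interval inequality from the scalar convexity of $\underline{f}$ and $\overline{f}$ via Lemma \ref{lc1}, whereas the paper starts from the interval convexity inequality and carries out the same endpoint $\min$/$\max$ computation directly; the delicate point you flag is handled identically in both.
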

\begin{proof}
Please see \ref{aptd2}.
\end{proof}
%
%
%
	

\section{Subdifferentiability of IVFs} \label{ssd}
\noindent Here we describe the concepts $gH$-subgradient and $gH$-subdifferential for convex IVFs and study their characteristics. In order to do this, we adopt the concept of subgradient for convex FVFs provided in \cite{Hai2018}.

\begin{dfn}\label{dsg}(\emph{gH-subgradient}). Let $\mathcal{X}$ be a nonempty convex subset of $\mathbb{R}^n$. An element $\widehat{\textbf{G}}=(\textbf{G}_1, \textbf{G}_2,\cdots,\textbf{G}_n)\in I(\mathbb{R})^n$ is said to be a $gH$-subgradient of a convex IVF $\textbf{F}: \mathcal{X} \rightarrow I(\mathbb{R})$ at $\bar{x}\in \mathcal{X}$ if
\begin{equation}\label{esg}
(x-\bar{x})^T\odot\widehat{\textbf{G}}\preceq \textbf{F}(x)\ominus_{gH} \textbf{F}(\bar{x}) \text{ for all } x\in \mathcal{X}.
\end{equation}
Due to Remark \ref{nlivf}, we can also define the $gH$-subgradient as $gH$-continuous linear IVF.\\
A $gH$-continuous linear IVF $\textbf{L}_{\bar{x}}:\mathcal{Y}\rightarrow I(\mathbb{R})$ is said to be $gH$-subgradient of $\textbf{F}$ at $\bar{x}\in \mathcal{X}$ if
\begin{equation}\label{esd}
\textbf{L}_{\bar{x}}(x-\bar{x})\preceq \textbf{F}(x)\ominus_{gH} \textbf{F}(\bar{x}) \text{ for all } x\in \mathcal{X},
\end{equation}
where $\mathcal{Y}$ is the smallest linear subspace of $\mathbb{R}^n$ containing $\mathcal{X}$.
\end{dfn}
\begin{dfn}\label{subdifferential}(\emph{gH-subdifferential}). The set $\partial \textbf{F}(\bar{x})$ of all $gH$-subgradients of the convex IVF $\textbf{F}: \mathcal{X}\subset\mathbb{R}^n \rightarrow I(\mathbb{R})$ at $\bar{x}\in \mathcal{X}$, where $\mathcal{X}$ is convex, is called $gH$-subdifferential of $\textbf{F}$ at $\bar{x}$.
\end{dfn}
Throughout this article, we express an element of $\partial\textbf{F}(\bar{x})$ either as as an element of $I(\mathbb{R})^n$ satisfying (\ref{esg}) or as an element of $\widehat{\mathcal{Y}}$ satisfying (\ref{esd}).

\begin{rmrk}\label{nsg1}
In view of Theorem \ref{td2}, it is to be noted that if $\textbf{F}$ is $gH$-differentiable at $\bar{x}\in \mathcal{X}$, then $\nabla\textbf{F}(\bar{x}) \in \partial \textbf{F}(\bar{x})$.
\end{rmrk}
\begin{example}\label{ex}
Let $\mathcal{X}$ be a nonempty convex subset of $\mathbb{R}$ and an IVF $\textbf{F}:\mathcal{X}\rightarrow I(\mathbb{R})$ be defined by $\textbf{F}(x) = \rvert x \rvert \odot \textbf{A} $, where $\textbf{0}\preceq\textbf{A}$. If $\textbf{G}\in I(\mathbb{R})$ is a $gH$-subgradient of $\textbf{F}$ at $\bar{x}=0$, then according to Definition \ref{dsg}, we have
 \[
 (x-\bar{x})\odot\textbf{G}\preceq \textbf{F}(x)\ominus_{gH} \textbf{F}(\bar{x}) \Longrightarrow \textbf{G}\odot x \preceq\textbf{A}\odot \rvert x \rvert.
\]
Therefore, for $x\leq0$, we have
\begin{equation}\label{hh}
\textbf{G}\odot x \preceq (-1)\odot\textbf{A}\odot x\Longrightarrow (-1)\odot\textbf{A}\preceq \textbf{G}
\end{equation}
and for $x\geq 0$, we have
\begin{equation}\label{h}
\textbf{G}\odot x \preceq \textbf{A}\odot x\Longrightarrow \textbf{G}\preceq \textbf{A}.
\end{equation}
With the help of (\ref{hh}) and (\ref{h}), we obtain
\[
(-1)\odot\textbf{A}\preceq \textbf{G}\preceq \textbf{A}.
\]
Hence, $\partial \textbf{F}(0)=\{\textbf{G}:(-1)\odot\textbf{A}\preceq \textbf{G}\preceq \textbf{A}\}$.\\

\noindent Considering $\textbf{A}=[1, 3]$, the IVF $\textbf{F}$ is depicted in Figure \ref{fsgm} by the shaded region within dashed lines, and two possible subgradients $\textbf{G}_1$, $\textbf{G}_2\in \partial\textbf{F}(0)$ of $\textbf{F}$ are illustrated by black and dark gray regions, respectively.

\begin{figure}[H]
\begin{center}
\includegraphics[scale=0.6]{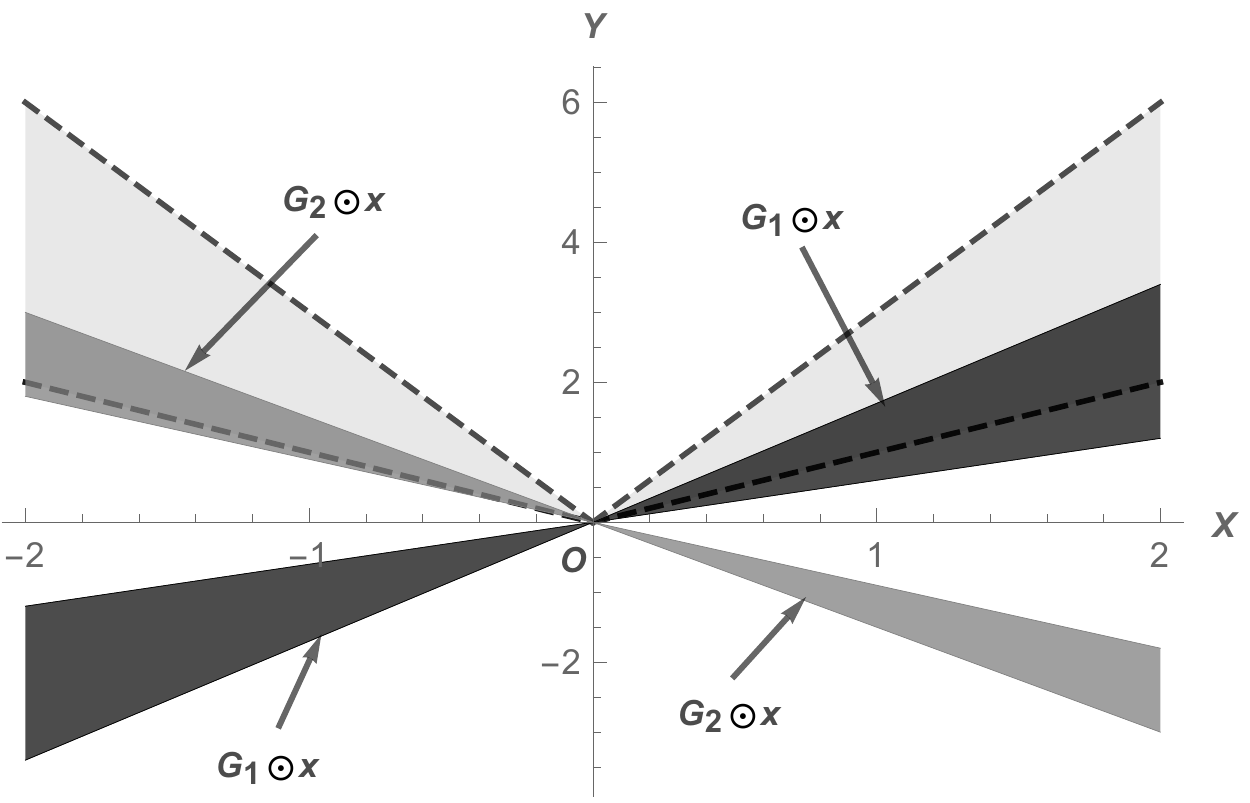}
    \caption{The IVF $\textbf{F}$ of Example \ref{ex} is depicted by the shaded region within dashed lines, and two possible subgradients $\textbf{G}_1$ and $\textbf{G}_2$ of $\textbf{F}$ are  illustrated by black and dark gray regions, respectively.}\label{fsgm}
\end{center}
\end{figure}
\end{example}
\begin{rmrk}\label{rsg}
It is noteworthy that
\begin{enumerate}[(i)]
    \item the author of \cite{Karaman2020} in Definition $2$ has proposed the concept of subgradient for IVFs by considering $L$ as linear real valued function. However, in Definition \ref{dsg} of the present article, we consider $\textbf{L}_{\bar x}$ as linear IVF. That's why our concept of subgradient in terms of linear function is more general.
    \item as IVFs are the special case of FVFs, one may think that we can adopt the concept of subgradient for FVFs of the article \cite{Zhang2005} as the concept of subgradient for IVFs. However, according to Definition $3.1$ of \cite{Zhang2005}, if we define the $gH$-subgradient $\widehat{\textbf{G}}$ satisfying the condition
    \begin{equation}\label{esga}
(x-\bar{x})^T\odot\widehat{\textbf{G}}\oplus\textbf{F}(\bar{x})\preceq \textbf{F}(x)
\end{equation}
instead of satisfying the condition \eqref{esg} in Definition \ref{dsg}, then Definition \ref{dsg} will be quite restrictive even for a $gH$-differentiable IVF. 
For instance, consider the following example.
\end{enumerate}
\end{rmrk}
\begin{example}
Let an IVF $\textbf{F}:[0, 2.5]\rightarrow I(\mathbb{R})$ be defined by
\begingroup\allowdisplaybreaks\begin{align*}
 \textbf{F}(x)&=[1, 1]\odot x^4\oplus [0, 1]\odot (x^2-x^4+34)\oplus[1, 6]\\
 &=[x^4+1, x^2+40]\\
 &=[\underline{f}(x),\overline{f}(x)]
\end{align*}\endgroup
Clearly, the real-valued functions $\underline{f}$ and $\overline{f}$ are differentiable at $\bar{x}=1$. Hence, the $gH$-derivative $\textbf{F}'(\bar{x})$ of $\textbf{F}$ at $\bar{x}=1$ exists due to Remark \ref{rd1}, and
\[
\nabla\textbf{F}(1)=\textbf{F}'(1)=[2, 4].
\]
Since
\[
\nabla\textbf{F}(1)\odot(2-1)=[2, 4]\preceq [3, 15]=\textbf{F}(2)\ominus_{gH}\textbf{F}(1)
\]
but
\[
\nabla\textbf{F}(1)\odot(2-1)\oplus\textbf{F}(1)=[4, 45]\npreceq[17, 44]=\textbf{F}(2)
\]
therefore, $\nabla\textbf{F}(1)\in \partial\textbf{F}(1)$ with respect to condition \eqref{esg} not respect to condition \eqref{esga}.
\end{example}
Now we provide an example of $gH$-subdifferential as a collection of $gH$-continuous linear IVF through Theorem \ref{tnf}. To do so, we introduce the concept of norm on the set $\widehat{\mathcal{Y}}$ of all $gH$-continuous linear IVFs on a linear subspace $\mathcal{Y}$ of $\mathbb{R}^n$.
\begin{dfn}(\emph{Norm on $\widehat{\mathcal{Y}}$}). A norm on the set $\widehat{\mathcal{Y}}$ of all $gH$-continuous linear IVF $\textbf{L}: \mathcal{Y} \rightarrow I(\mathbb{R})$ is defined by the function $\lVert \cdot \rVert_{\widehat{\mathcal{Y}}}: \widehat{\mathcal{Y}} \to \mathbb{R}_+$ such that
\[
\lVert \textbf{L}\rVert_{\widehat{\mathcal{Y}}} = \sup\limits_{x\neq 0}\frac{\lVert \textbf{L}(x)\rVert_{I(\mathbb{R})}}{\lVert x \rVert},~\text{where}~x\in\mathcal{Y}.
\]
To prove that the function ${\lVert \cdot \rVert}_{\widehat{\mathcal{Y}}}$ satisfies all the properties of a norm please see \ref{aplfnorm}.
\end{dfn}
\begin{lem}\label{lnf}
Let $\mathcal{Y}$ be a linear subspace of $\mathbb{R}^n$, and $\textbf{L}\in \widehat{\mathcal{Y}}$ be such that
\[
\textbf{L}(x)\preceq \textbf{C}\odot \lVert x \rVert~\text{for all}~x\in\mathcal{Y},
\]
where $\textbf{C}$ is a closed and bounded interval. Then,
\[
\lVert\textbf{L}(x)\rVert_{I(\mathbb{R})}\leq \lVert\textbf{C}\rVert_{I(\mathbb{R})} \lVert x \rVert~\text{for all}~x\in\mathcal{Y}.
\]
\end{lem}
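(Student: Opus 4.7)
The plan is to translate the dominance hypothesis into scalar inequalities on the endpoints of $\textbf{L}(x)$ and then exploit the linearity of $\textbf{L}$ (applied to $-x$) to convert these one-sided bounds into two-sided bounds on $\underline{L}(x)$ and $\overline{L}(x)$, from which the norm inequality falls out immediately. Concretely, I would first write $\textbf{L}(x)=[\underline{L}(x),\overline{L}(x)]$ and $\textbf{C}=[\underline{c},\overline{c}]$. Since $\|x\|\ge 0$ is a scalar, Moore's multiplication collapses to $\textbf{C}\odot\|x\|=[\underline{c}\|x\|,\overline{c}\|x\|]$, so the hypothesis $\textbf{L}(x)\preceq \textbf{C}\odot\|x\|$ unpacks via the definition of $\preceq$ into the two scalar inequalities
\[
\underline{L}(x)\le \underline{c}\|x\|, \qquad \overline{L}(x)\le \overline{c}\|x\|.
\]

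The key step is then to apply the same hypothesis to the vector $-x$. By property (\ref{cl1}) of Definition \ref{dlivf} with $\lambda=-1$, we have $\textbf{L}(-x)=(-1)\odot\textbf{L}(x)=[-\overline{L}(x),-\underline{L}(x)]$; moreover $\|-x\|=\|x\|$, so the hypothesis yields the additional two inequalities
\[
-\overline{L}(x)\le \underline{c}\|x\|, \qquad -\underline{L}(x)\le \overline{c}\|x\|.
\]
Combining both batches gives the two-sided bounds $-\overline{c}\|x\|\le\underline{L}(x)\le\underline{c}\|x\|$ and $-\underline{c}\|x\|\le\overline{L}(x)\le\overline{c}\|x\|$, which imply
\[
|\underline{L}(x)|\le \max\{|\underline{c}|,|\overline{c}|\}\|x\|, \qquad |\overline{L}(x)|\le \max\{|\underline{c}|,|\overline{c}|\}\|x\|.
\]
Taking the maximum and recalling Definition \ref{irnorm} gives $\|\textbf{L}(x)\|_{I(\mathbb{R})}=\max\{|\underline{L}(x)|,|\overline{L}(x)|\}\le \|\textbf{C}\|_{I(\mathbb{R})}\|x\|$, which is the desired conclusion.

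There is no real obstacle here; the only thing one has to be mindful of is the bookkeeping around signs, namely that $\|x\|\ge 0$ forces $\textbf{C}\odot\|x\|$ to have endpoints $\underline{c}\|x\|$ and $\overline{c}\|x\|$ in the \emph{same} order as those of $\textbf{C}$, and that scaling by $-1$ flips the endpoints of $\textbf{L}(x)$. Once these two small facts are correctly tracked, the rest is purely a one-line consequence of $|y|\le\max\{|a|,|b|\}$ whenever $a\le y\le b$.
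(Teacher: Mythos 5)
Your proposal is correct and follows essentially the same route as the paper's proof in \ref{aplnf}: unpack the dominance into endpoint inequalities, apply the hypothesis at $-x$ using linearity (so $\textbf{L}(-x)=(-1)\odot\textbf{L}(x)$ flips the endpoints) to obtain the matching lower bounds, and combine the two-sided bounds via $|y|\le\max\{|a|,|b|\}$ for $a\le y\le b$. The only cosmetic difference is that the paper phrases the $-x$ step as reversing the dominance relation $(-1)\odot\textbf{C}\odot\lVert x\rVert\preceq\textbf{L}(x)$ rather than comparing the flipped endpoints directly, which is the same computation.
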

\begin{proof}
Please see \ref{aplnf}.
\end{proof}
\begin{thm}\label{tnf}
Let $\mathcal{Y}$ be a linear subspace of $\mathbb{R}^n$ and $\textbf{F}: \mathcal{Y} \rightarrow I(\mathbb{R})$ be a convex IVF, defined by
\[
\textbf{F}(x)=\textbf{C}\odot \lVert x \rVert~\text{for all}~x\in\mathcal{Y},
\]
where $\textbf{C}\in I(\mathbb{R}_+)$. Then,
\[
\partial\textbf{F}(0)=\left\{\textbf{L}_{0}\in\widehat{\mathcal{Y}}\mid\lVert\textbf{L}_{0}\rVert_{\widehat{\mathcal{Y}}}\leq  \lVert\textbf{C}\rVert_{I(\mathbb{R})}\right\}.
\]
\end{thm}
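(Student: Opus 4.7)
The plan is a standard double-inclusion argument between $\partial\textbf{F}(0)$ and the closed $\widehat{\mathcal{Y}}$-ball of radius $\|\textbf{C}\|_{I(\mathbb{R})}$. I would organize the proof around the fact that $\textbf{F}(0) = \textbf{C}\odot 0 = \textbf{0}$, so the $gH$-subgradient inequality \eqref{esd} at $\bar{x}=0$ collapses to $\textbf{L}_{0}(x) \preceq \textbf{C}\odot \|x\|$ for every $x\in\mathcal{Y}$.

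For the inclusion $\partial\textbf{F}(0) \subseteq \{\textbf{L}_0 : \|\textbf{L}_0\|_{\widehat{\mathcal{Y}}} \leq \|\textbf{C}\|_{I(\mathbb{R})}\}$, I would pick any $\textbf{L}_0 \in \partial\textbf{F}(0)$, apply \eqref{esd} at $\bar{x}=0$ to obtain $\textbf{L}_0(x) \preceq \textbf{C}\odot\|x\|$ for all $x\in\mathcal{Y}$, and then invoke Lemma \ref{lnf} (which converts a dominance inequality of this exact shape into a norm bound) to deduce $\|\textbf{L}_0(x)\|_{I(\mathbb{R})} \leq \|\textbf{C}\|_{I(\mathbb{R})}\|x\|$. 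Dividing by $\|x\|$ for $x\neq 0$ and taking the supremum yields $\|\textbf{L}_0\|_{\widehat{\mathcal{Y}}} \leq \|\textbf{C}\|_{I(\mathbb{R})}$. This direction is essentially bookkeeping once Lemma \ref{lnf} is in hand.

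For the reverse inclusion, I would start with $\textbf{L}_0 \in \widehat{\mathcal{Y}}$ satisfying $\|\textbf{L}_0\|_{\widehat{\mathcal{Y}}} \leq \|\textbf{C}\|_{I(\mathbb{R})}$ and aim to verify $\textbf{L}_0(x) \preceq \textbf{C}\odot\|x\|$ pointwise, which by the observation above is exactly membership in $\partial\textbf{F}(0)$. Writing $\textbf{L}_0(x) = [\underline{\ell}(x),\overline{\ell}(x)]$ and $\textbf{C}=[\underline{c},\overline{c}]$, and using $\textbf{C}\in I(\mathbb{R}_+)$ so that $\textbf{C}\odot\|x\| = [\underline{c}\|x\|,\overline{c}\|x\|]$ and $\|\textbf{C}\|_{I(\mathbb{R})}=\overline{c}$, the definition of the operator norm immediately gives $\overline{\ell}(x) \leq |\overline{\ell}(x)| \leq \|\textbf{L}_0(x)\|_{I(\mathbb{R})} \leq \overline{c}\|x\|$, settling the upper-endpoint inequality. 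To handle the lower-endpoint inequality $\underline{\ell}(x) \leq \underline{c}\|x\|$, I would use the representation $\textbf{L}_0(x) = x^T\odot\widehat{\textbf{A}}_0$ from Remark \ref{nlivf}, apply the norm bound also to $-x$ (and exploit $\textbf{L}_0(-x) = (-1)\odot\textbf{L}_0(x)$) to pin down both endpoints of $\textbf{L}_0(x)$ coordinate-wise, and then assemble the resulting scalar inequalities together with the coordinate-wise sign information forced by $\textbf{C}\in I(\mathbb{R}_+)$.

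The main obstacle is exactly this second endpoint inequality in the reverse inclusion, because the operator norm $\|\textbf{L}_0\|_{\widehat{\mathcal{Y}}}$ only records the $I(\mathbb{R})$-norm of $\textbf{L}_0(x)$ (i.e., the maximum of $|\underline{\ell}(x)|$ and $|\overline{\ell}(x)|$), whereas the dominance relation $\preceq$ is sensitive to the two endpoints separately and is not symmetric under sign flips. Closing this gap will require using the coordinated structure of $\underline{\ell}$ and $\overline{\ell}$ forced by linearity of $\textbf{L}_0$ together with $\textbf{C}\in I(\mathbb{R}_+)$; once this is done, the conclusion $\textbf{L}_0(x)\preceq\textbf{C}\odot\|x\| = \textbf{F}(x)\ominus_{gH}\textbf{F}(0)$ is precisely \eqref{esd}, placing $\textbf{L}_0$ in $\partial\textbf{F}(0)$ and completing the proof.
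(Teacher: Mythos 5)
Your first inclusion is exactly the paper's argument: apply the subgradient inequality at $\bar{x}=0$ to get $\textbf{L}_0(x)\preceq\textbf{C}\odot\lVert x\rVert$, invoke Lemma \ref{lnf}, divide by $\lVert x\rVert$ and take the supremum. The genuine problem is the reverse inclusion. You rightly recognize that it is needed for set equality and rightly isolate the hard part (the lower-endpoint inequality $\underline{\ell}(x)\le\underline{c}\lVert x\rVert$), but your proposal stops at a plan (``closing this gap will require\dots'') rather than a proof, so as written it is incomplete. Worse, the gap cannot be closed: the reverse inclusion is false whenever $\underline{c}<\overline{c}$. Take $n=1$, $\mathcal{Y}=\mathbb{R}$, $\textbf{C}=[1,2]$, and $\textbf{L}_0(x)=x\odot[3/2,\,2]$. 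Then $\lVert\textbf{L}_0\rVert_{\widehat{\mathcal{Y}}}=2=\lVert\textbf{C}\rVert_{I(\mathbb{R})}$, yet for $x>0$ one has
\[
\textbf{L}_0(x)=[3x/2,\,2x]\npreceq[x,\,2x]=\textbf{C}\odot\lVert x\rVert=\textbf{F}(x)\ominus_{gH}\textbf{F}(0),
\]
since $3x/2>x$; so $\textbf{L}_0$ lies in the stated norm ball but not in $\partial\textbf{F}(0)$. Your own diagnosis already points at the culprit: the operator norm controls only $\max\{\lvert\underline{\ell}(x)\rvert,\lvert\overline{\ell}(x)\rvert\}$ and carries no information about $\underline{\ell}(x)$ relative to $\underline{c}\lVert x\rVert$, and no exploitation of linearity or of $\textbf{C}\in I(\mathbb{R}_+)$ can recover that constraint.

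For comparison, the paper's own proof proves only the forward inclusion $\partial\textbf{F}(0)\subseteq\{\textbf{L}_0\in\widehat{\mathcal{Y}}\mid\lVert\textbf{L}_0\rVert_{\widehat{\mathcal{Y}}}\le\lVert\textbf{C}\rVert_{I(\mathbb{R})}\}$ --- precisely the half you completed --- and then asserts equality with a bare ``Hence''; the set equality is an overclaim unless $\textbf{C}$ is degenerate ($\underline{c}=\overline{c}$). So your attempt is faithful to what the paper actually establishes, and your honesty about the unproved direction is to your credit, but neither you nor the paper proves the theorem as stated, and the statement itself needs to be weakened to the one-sided inclusion (or the hypothesis strengthened) before any proof can succeed.
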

\begin{proof}
Let $\textbf{L}_{0}\in\partial\textbf{F}(0)$. Therefore, for all nonzero $x\in\mathcal{Y}$,
\begingroup\allowdisplaybreaks\begin{align*}
&\textbf{L}_{0}(x-0)\preceq\textbf{F}(x)\ominus_{gH}\textbf{F}(0)\\
\Longrightarrow~&\textbf{L}_{0}(x)\preceq\textbf{C}\odot \lVert x \rVert\\
\Longrightarrow~&\lVert\textbf{L}_{0}(x)\rVert_{I(\mathbb{R})}\leq \lVert\textbf{C}\rVert_{I(\mathbb{R})} \lVert x \rVert,~\text{by Lemma \ref{lnf}}\\
\Longrightarrow~&\frac{\lVert\textbf{L}_{0}(x)\rVert_{I(\mathbb{R})}}{\lVert x \rVert}\leq \lVert\textbf{C}\rVert_{I(\mathbb{R})} \\
\Longrightarrow~&\sup_{x\neq 0}\frac{\lVert\textbf{L}_{0}(x)\rVert_{I(\mathbb{R})}}{\lVert x \rVert}\leq \lVert\textbf{C}\rVert_{I(\mathbb{R})}\\
\Longrightarrow~&\lVert\textbf{L}_{0}\rVert_{\widehat{\mathcal{Y}}}\leq  \lVert\textbf{C}\rVert_{I(\mathbb{R})}.
\end{align*}\endgroup
Hence,
\[
\partial\textbf{F}(0)=\left\{\textbf{L}_{0}\in\widehat{\mathcal{Y}}\mid\lVert\textbf{L}_{0}\rVert_{\widehat{\mathcal{Y}}}\leq  \lVert\textbf{C}\rVert_{I(\mathbb{R})}\right\}.
\]
\end{proof}
Next, we show that a $gH$-differentiable convex IVF has only one $gH$-subgradient, which is $gH$-gradient of the IVF. Also, we show that on a real linear subspace if the $gH$-subgradients of a convex IVF at a point exists, then the $gH$-directional derivative of the IVF at that point in each direction is maximum of all the products of $gH$-subgradients and the direction.

\begin{lem}\label{lddsg}
Let $\mathcal{X}$ be a nonempty convex subset of $\mathbb{R}^n$ and $\textbf{F}$ be a convex IVF on $\mathcal{X}$. Then, for an arbitrary $\bar{x}\in \mathbb{R}^n$
\[
\partial \textbf{F}(\bar{x})=\left\{\widehat{\textbf{G}} \in I(\mathbb{R})^n \mid h^T\odot\widehat{\textbf{G}}\preceq\textbf{F}'(\bar{x})(h) ~\text{for all}~h \in \mathcal{X}\right\},
\]
where $\textbf{F}'(\bar{x})(h)$ is $gH$-directional derivative of $\textbf{F}$ at $\bar{x}$ in the direction $h$.
\end{lem}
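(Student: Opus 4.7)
The plan is to prove the set equality by double inclusion, connecting the defining inequality (\ref{esg}) of a $gH$-subgradient to the directional $gH$-derivative on both sides.

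For the inclusion $\partial\textbf{F}(\bar{x})\subseteq\{\widehat{\textbf{G}}:h^T\odot\widehat{\textbf{G}}\preceq \textbf{F}'(\bar{x})(h)\}$, take $\widehat{\textbf{G}}\in\partial\textbf{F}(\bar{x})$ and any admissible direction $h$. For sufficiently small $\lambda>0$ the point $x=\bar{x}+\lambda h$ lies in $\mathcal{X}$, so (\ref{esg}) gives
\[
(\lambda h)^{T}\odot\widehat{\textbf{G}}\preceq\textbf{F}(\bar{x}+\lambda h)\ominus_{gH}\textbf{F}(\bar{x}).
\]
Since $\preceq$ is preserved by scalar multiplication by $\tfrac{1}{\lambda}>0$, dividing both sides by $\lambda$ and letting $\lambda\to 0+$ yields $h^T\odot\widehat{\textbf{G}}\preceq\textbf{F}'(\bar{x})(h)$ by the very definition of the directional $gH$-derivative (Definition \ref{ddd}). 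This step is routine once we know that $\preceq$ passes to the limit, which follows from the closedness of the half-spaces defining $\preceq$ in terms of $\underline{\cdot}$ and $\overline{\cdot}$.

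For the reverse inclusion, suppose $h^T\odot\widehat{\textbf{G}}\preceq\textbf{F}'(\bar{x})(h)$ for every direction $h$. Given any $x\in\mathcal{X}$, set $h=x-\bar{x}$ and consider the one-variable maps
\[
p(\lambda)=\underline{f}(\bar{x}+\lambda h)-\underline{f}(\bar{x}),\qquad q(\lambda)=\overline{f}(\bar{x}+\lambda h)-\overline{f}(\bar{x}),\qquad \lambda\in[0,1].
\]
By Lemma \ref{lc1}, $\underline{f}$ and $\overline{f}$ are real-valued convex functions, so $p$ and $q$ are convex with $p(0)=q(0)=0$; hence their difference quotients $p(\lambda)/\lambda$ and $q(\lambda)/\lambda$ are non-decreasing on $(0,1]$. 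Letting $\lambda\to 0+$ and using the explicit min/max form of $\ominus_{gH}$, one concludes $\textbf{F}'(\bar{x})(h)\preceq\textbf{F}(\bar{x}+h)\ominus_{gH}\textbf{F}(\bar{x})=\textbf{F}(x)\ominus_{gH}\textbf{F}(\bar{x})$ componentwise. Combined with the hypothesis,
\[
(x-\bar{x})^T\odot\widehat{\textbf{G}}=h^T\odot\widehat{\textbf{G}}\preceq\textbf{F}'(\bar{x})(h)\preceq\textbf{F}(x)\ominus_{gH}\textbf{F}(\bar{x}),
\]
showing $\widehat{\textbf{G}}\in\partial\textbf{F}(\bar{x})$.

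The main obstacle I anticipate is the second inclusion, specifically the passage from the scalar convexity of $p$ and $q$ to the interval inequality $\textbf{F}'(\bar{x})(h)\preceq\textbf{F}(\bar{x}+h)\ominus_{gH}\textbf{F}(\bar{x})$ in the $\preceq$-order. Because $\ominus_{gH}$ is defined via $\min$ and $\max$ of the endpoint differences rather than separately on lower and upper boundaries, one must verify that the monotonicity of both $p(\lambda)/\lambda$ and $q(\lambda)/\lambda$ jointly implies the lower and upper endpoints of the $gH$-difference scale monotonically. This is a short but careful case analysis using the elementary fact that if $p_1\le p_2$ and $q_1\le q_2$, then $\min\{p_1,q_1\}\le\min\{p_2,q_2\}$ and $\max\{p_1,q_1\}\le\max\{p_2,q_2\}$, which is exactly the definition of $\preceq$. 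Everything else is direct manipulation of definitions.
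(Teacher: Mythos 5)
Your proposal is correct, and in fact it does more than the paper does. The paper's own proof of Lemma \ref{lddsg} establishes only the forward inclusion: it takes $\widehat{\textbf{G}}\in\partial\textbf{F}(\bar{x})$, substitutes $x=\bar{x}+\lambda h$, divides by $\lambda$, and passes to the limit to get $h^T\odot\widehat{\textbf{G}}\preceq\textbf{F}'(\bar{x})(h)$ --- exactly your first paragraph --- and then stops, leaving the reverse inclusion (which is what makes the displayed identity a set \emph{equality}) unproved. Your second paragraph supplies the missing half via the monotonicity of the difference quotients $p(\lambda)/\lambda$ and $q(\lambda)/\lambda$ of the convex boundary functions, yielding $\textbf{F}'(\bar{x})(h)\preceq\textbf{F}(\bar{x}+h)\ominus_{gH}\textbf{F}(\bar{x})$ and hence $\widehat{\textbf{G}}\in\partial\textbf{F}(\bar{x})$ by transitivity of $\preceq$. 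This is precisely the ingredient the paper itself invokes later (citing Lemma 3.1 of the $gH$-G\^{a}teaux derivative reference) in the proof of Theorem \ref{tddsg}, so your argument is consistent with the authors' toolkit; your case analysis on $\min$/$\max$ of the endpoint differences is the right way to see that the scalar inequalities transfer to the $\preceq$ order. One small caveat, which is a defect of the statement rather than of your proof: the set on the right is quantified over $h\in\mathcal{X}$, whereas both the paper's argument and yours really concern directions of the form $h=x-\bar{x}$ with $x\in\mathcal{X}$ (equivalently, $h$ with $\bar{x}+\lambda h\in\mathcal{X}$ for small $\lambda>0$); these need not coincide unless $\bar{x}\in\mathcal{X}$ and $\mathcal{X}$ is suitably positioned, so the quantifier in the lemma should be read in that corrected sense.
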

\begin{proof}
For an arbitrary $\widehat{\textbf{G}}\in \partial \textbf{F}(\bar{x})$, we have
\begin{align*}
(x-\bar{x})^T\odot\widehat{\textbf{G}}\preceq \textbf{F}(x)\ominus_{gH} \textbf{F}(\bar{x})~\text{for all}~~ x\in \mathcal{X}.
\end{align*}
Replacing $x$ by $\bar{x}+\lambda h$, where $\lambda >0$, we get
\[
(\lambda h)^T\odot\widehat{\textbf{G}}\preceq \textbf{F}(\bar{x}+\lambda h)\ominus_{gH} \textbf{F}(\bar{x}),
\]
which implies
\begin{align*}
&h^T\odot\widehat{\textbf{G}}\preceq \lim_{\lambda \to 0+}\frac{1}{\lambda}\odot\left(\textbf{F}(\bar{x}+\lambda h)\ominus_{gH}\textbf{F}(\bar{x})\right)\\
\Longrightarrow~& h^T\odot\widehat{\textbf{G}}\preceq\textbf{F}'(\bar{x})(h). \end{align*}
\end{proof}
\begin{thm}\label{tgd}
Let $\mathcal{X}$ be a nonempty subset of $\mathbb{R}^n$. If an IVF $\textbf{F}:\mathcal{X}\rightarrow I(\mathbb{R})$ is $gH$-differentiable at $\bar{x}\in\mathcal{X}$, then
\[
\partial \textbf{F}(\bar{x})=\left\{\nabla\textbf{F}(\bar{x})\right\}.
\]
\end{thm}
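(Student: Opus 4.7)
The plan is to use Remark \ref{nsg1} to get the forward inclusion $\nabla\textbf{F}(\bar{x})\in\partial\textbf{F}(\bar{x})$ for free, and then to argue that the $gH$-subdifferential cannot contain anything else by squeezing each candidate $\widehat{\textbf{G}}\in\partial\textbf{F}(\bar{x})$ between the directional derivative inequality and its negated version.

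For the reverse inclusion, I would take an arbitrary $\widehat{\textbf{G}}\in\partial\textbf{F}(\bar{x})$. By Lemma \ref{lddsg}, this gives
\[
h^T\odot\widehat{\textbf{G}}\preceq \textbf{F}'(\bar{x})(h)\quad\text{for every direction } h.
\]
Since $\textbf{F}$ is $gH$-differentiable at $\bar{x}$, Theorem \ref{td} lets me rewrite the right-hand side as $h^T\odot\nabla\textbf{F}(\bar{x})$, so
\[
h^T\odot\widehat{\textbf{G}}\preceq h^T\odot\nabla\textbf{F}(\bar{x})\quad\text{for all } h\in\mathbb{R}^n.
\]

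The key step is to run the same inequality with $-h$ in place of $h$. Using the standard interval identity $(-\lambda)\odot\textbf{A}=(-1)\odot(\lambda\odot\textbf{A})$, together with distributivity of $(-1)\odot(\cdot)$ over $\oplus$, one sees that $(-h)^T\odot\widehat{\textbf{G}}=(-1)\odot(h^T\odot\widehat{\textbf{G}})$ and similarly for $\nabla\textbf{F}(\bar{x})$. Combined with the elementary order-reversal $\textbf{A}\preceq\textbf{B}\iff(-1)\odot\textbf{B}\preceq(-1)\odot\textbf{A}$, the inequality for $-h$ produces the reverse inequality $h^T\odot\nabla\textbf{F}(\bar{x})\preceq h^T\odot\widehat{\textbf{G}}$. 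Antisymmetry of $\preceq$ then forces
\[
h^T\odot\widehat{\textbf{G}} = h^T\odot\nabla\textbf{F}(\bar{x})\quad\text{for all } h\in\mathbb{R}^n.
\]

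Finally, I would specialize to $h=e_j$ (the $j$-th standard basis vector) for each $j=1,2,\ldots,n$; the expansion $h^T\odot\widehat{\textbf{G}}=\bigoplus_{i=1}^n h_i\odot\textbf{G}_i$ collapses to $\textbf{G}_j$ on the left and to $D_j\textbf{F}(\bar{x})$ on the right, yielding $\textbf{G}_j=D_j\textbf{F}(\bar{x})$ componentwise, and hence $\widehat{\textbf{G}}=\nabla\textbf{F}(\bar{x})$. The only real subtlety — and what I regard as the main obstacle — is the interval-arithmetic bookkeeping in the negation step: one must be certain that scaling by $-1$ commutes with successive interval addition and genuinely reverses $\preceq$; both facts follow from $\textbf{A}\ominus\textbf{B}=\textbf{A}\oplus(-1)\odot\textbf{B}$ (Remark \ref{ria1}) and from writing out lower/upper endpoints explicitly, so the argument carries through cleanly.
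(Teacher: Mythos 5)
Your proposal is correct and follows essentially the same route as the paper's own proof: Lemma \ref{lddsg} plus Theorem \ref{td} to get $h^T\odot\widehat{\textbf{G}}\preceq h^T\odot\nabla\textbf{F}(\bar{x})$, the substitution $h\mapsto -h$ with order reversal under $(-1)\odot(\cdot)$ to obtain the reverse inequality, and specialization to the standard basis vectors to conclude $\widehat{\textbf{G}}=\nabla\textbf{F}(\bar{x})$. Your explicit mention of the forward inclusion via Remark \ref{nsg1} is a small point of extra care that the paper leaves implicit, but it is not a different argument.
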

\begin{proof}
Let $\textbf{G}\in \partial \textbf{F}(\bar{x})$. Since $\textbf{F}$ is $gH$-differentiable at $\bar{x}$, in view of Theorem \ref{td} and Lemma \ref{lddsg}, we have
\begin{equation}\label{esg1}
h^T\odot\widehat{\textbf{G}}\preceq h^T\odot\nabla\textbf{F}(\bar{x})~\text{for all}~h \in \mathbb{R}^n.
\end{equation}
Replacing $h$ by $-h$ in the last relation we get
\[
(-h)^T\odot\widehat{\textbf{G}}\preceq (-h)^T\odot\nabla\textbf{F}(\bar{x}),
\]
which implies
\begin{equation}\label{esg2}
h^T\odot\nabla\textbf{F}(\bar{x})\preceq h^T\odot\widehat{\textbf{G}}~\text{for all}~h \in \mathbb{R}^n.
\end{equation}
Thus, the relations \eqref{esg1} and \eqref{esg2} together yield
\begin{equation}\label{esg3}
h^T\odot\nabla\textbf{F}(\bar{x})=h^T\odot\widehat{\textbf{G}}~\text{for all}~h \in \mathbb{R}^n.
\end{equation}
For each $i\in\{1, 2, \cdots, n\}$, by choosing $h=e_i$, we have
\[
D_i\textbf{F}(\bar{x})=\textbf{G}_i.
\]
Therefore,
\[
\nabla\textbf{F}(\bar{x})=\widehat{\textbf{G}}
\]
and hence,
\[
\partial \textbf{F}(\bar{x})=\left\{\nabla\textbf{F}(\bar{x})\right\}.
\]
\end{proof}
\begin{thm}\label{tddsg}
Let $\textbf{F}: \mathcal{Y} \rightarrow I(\mathbb{R})$  be a convex and $gH$-continuous IVF on a real linear subspace $\mathcal{Y}$ of $\mathbb{R}^n$. If $gH$-subdifferential $\partial \textbf{F}(\bar{x})$ of $\textbf{F}$ at an $\bar{x} \in \mathcal{Y}$ is nonempty, then
\[
\textbf{F}'(\bar{x})(h)=\max \left\{h^T\odot \widehat{\textbf{G}}\mid \widehat{\textbf{G}}\in\partial \textbf{F}(\bar{x})\right\}~\text{for all}~h \in \mathcal{Y},
\]
where $\textbf{F}'(\bar{x})(h)$ is $gH$-directional derivative of $\textbf{F}$ at $\bar{x}$ in the direction $h$.
\end{thm}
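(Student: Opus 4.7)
The $\preceq$-direction of the identity is immediate from Lemma \ref{lddsg}: every $\widehat{\textbf{G}} \in \partial\textbf{F}(\bar{x})$ satisfies $h^T\odot\widehat{\textbf{G}} \preceq \textbf{F}'(\bar{x})(h)$, so $\textbf{F}'(\bar{x})(h)$ is a $\preceq$-upper bound of the set $\left\{h^T\odot\widehat{\textbf{G}} \mid \widehat{\textbf{G}} \in \partial\textbf{F}(\bar{x})\right\}$. The substance of the theorem is therefore the \emph{attainment} of this bound, i.e., producing a specific $\widehat{\textbf{G}}^* \in \partial\textbf{F}(\bar{x})$ for which $h^T\odot\widehat{\textbf{G}}^* = \textbf{F}'(\bar{x})(h)$, which is what licenses writing ``$\max$'' in place of ``$\sup$''.

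To construct $\widehat{\textbf{G}}^*$ I would run a Hahn--Banach style extension adapted to the interval setting. First, from the convexity of $\textbf{F}$ (combined with Lemma \ref{lc1} applied to the endpoint functions $\underline{f}$ and $\overline{f}$), one shows that the mapping $d \mapsto \textbf{F}'(\bar{x})(d)$ is positively homogeneous and subadditive with respect to $\preceq$ on $\mathcal{Y}$. Fix the chosen direction $h$ and, on the one-dimensional subspace $\mathcal{L} = \mathbb{R}h$, define the linear IVF $\textbf{L}_0(\lambda h) = \lambda \odot \textbf{F}'(\bar{x})(h)$. Using positive homogeneity and the subadditive inequality applied to $h$ and $-h$, one verifies $\textbf{L}_0(y) \preceq \textbf{F}'(\bar{x})(y)$ for all $y \in \mathcal{L}$. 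I would then extend $\textbf{L}_0$ to a $gH$-continuous linear IVF $\textbf{L}^*:\mathcal{Y} \to I(\mathbb{R})$ satisfying $\textbf{L}^*(y) \preceq \textbf{F}'(\bar{x})(y)$ for every $y \in \mathcal{Y}$. By Remark \ref{nlivf}, $\textbf{L}^*$ admits the representation $\textbf{L}^*(y) = y^T \odot \widehat{\textbf{G}}^*$ for some $\widehat{\textbf{G}}^* \in I(\mathbb{R})^n$.

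To conclude, I would observe that the convexity of $\textbf{F}$ implies the standard directional-derivative inequality $\textbf{F}'(\bar{x})(y-\bar{x}) \preceq \textbf{F}(y) \ominus_{gH} \textbf{F}(\bar{x})$ for all $y \in \mathcal{Y}$ (a consequence of Theorem \ref{td2} applied through Lemma \ref{lc1}, or directly from the definition of $\textbf{F}'$ and the convexity of the difference quotient). Chaining with $\textbf{L}^*(y-\bar{x}) \preceq \textbf{F}'(\bar{x})(y-\bar{x})$ gives $(y-\bar{x})^T \odot \widehat{\textbf{G}}^* \preceq \textbf{F}(y) \ominus_{gH} \textbf{F}(\bar{x})$, so $\widehat{\textbf{G}}^* \in \partial\textbf{F}(\bar{x})$ by Definition \ref{dsg}. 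Since $h \in \mathcal{L}$, the attainment $h^T \odot \widehat{\textbf{G}}^* = \textbf{L}^*(h) = \textbf{L}_0(h) = \textbf{F}'(\bar{x})(h)$ holds by construction, finishing the argument.

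The main obstacle is the interval-valued Hahn--Banach extension step. My intended route is to decompose the sublinear majorant via Lemma \ref{lc1} into its lower-endpoint and upper-endpoint functions, extend each by the classical real-valued Hahn--Banach theorem, and then reassemble the pair into an interval-valued linear functional. The delicate point is that $y^T \odot \widehat{\textbf{G}}^*$ in interval arithmetic has endpoints that depend on the \emph{signs} of the components of $y$, so one must take care that the pair of scalar extensions actually packages into a bona fide linear IVF in the sense of Definition \ref{dlivf} and reproduces the prescribed interval values on $\mathcal{L}$; verifying this compatibility (and the induced $gH$-continuity, which reduces to boundedness on the finite-dimensional $\mathcal{Y}$ via Lemma \ref{kk60}) is the technical heart of the proof.
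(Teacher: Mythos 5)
Your first paragraph is fine and agrees with the paper: by Lemma \ref{lddsg}, every $\widehat{\textbf{G}}\in\partial\textbf{F}(\bar{x})$ satisfies $h^T\odot\widehat{\textbf{G}}\preceq\textbf{F}'(\bar{x})(h)$, so $\textbf{F}'(\bar{x})(h)$ is an upper bound, and you are right that the whole content of the theorem is attainment. The gap is that the attainment step --- the only nontrivial step --- is exactly the one you defer, and the route you sketch does not go through as described. The domination hypothesis a Hahn--Banach argument needs is that $d\mapsto\textbf{F}'(\bar{x})(d)$ be subadditive with respect to $\preceq$; but writing $\textbf{F}'(\bar{x})(h)=\left[\min\{\underline{f}'(\bar{x};h),\overline{f}'(\bar{x};h)\},\,\max\{\underline{f}'(\bar{x};h),\overline{f}'(\bar{x};h)\}\right]$ (with $\underline{f}'(\bar{x};\cdot),\overline{f}'(\bar{x};\cdot)$ the real directional derivatives), the lower endpoint is a pointwise minimum of two sublinear functions and is not subadditive in general, so this hypothesis is unjustified and generally false. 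The fallback you mention --- extend two scalar functionals $g_1,g_2$ dominated by $\underline{f}'(\bar{x};\cdot)$ and $\overline{f}'(\bar{x};\cdot)$ separately and reassemble $\widehat{\textbf{G}}^*$ componentwise --- also fails at the point you yourself flag: $y^T\odot\widehat{\textbf{G}}^*$ has lower endpoint $\sum_i\min\{y_ig_{1i},y_ig_{2i}\}\leq\min\{g_1^Ty,g_2^Ty\}$ and upper endpoint $\sum_i\max\{y_ig_{1i},y_ig_{2i}\}\geq\max\{g_1^Ty,g_2^Ty\}$, i.e., it is a strictly wider interval than $\left[\min\{g_1^Ty,g_2^Ty\},\max\{g_1^Ty,g_2^Ty\}\right]$ in general, so neither the exact equality $h^T\odot\widehat{\textbf{G}}^*=\textbf{F}'(\bar{x})(h)$ nor the subgradient inequality of Definition \ref{dsg} (whose upper endpoint can be violated) is secured. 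Calling this ``the technical heart'' and leaving it unresolved means the proof is not there.

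For comparison, the paper takes a completely different and far more elementary route: it invokes the monotonicity of the interval difference quotient (Lemma 3.1 of \cite{Ghosh2019derivative}) to obtain
\begin{equation*}
\textbf{F}'(\bar{x})(h)\preceq\textbf{F}(\bar{x}+h)\ominus_{gH}\textbf{F}(\bar{x})\quad\text{for all }h\in\mathcal{Y},
\end{equation*}
and then concludes directly from Lemma \ref{lddsg}; no Hahn--Banach extension appears anywhere. (One may fairly object that the paper's own final step is also terse about exhibiting a maximizing $\widehat{\textbf{G}}$, but that is the argument you are being measured against, and your proposal neither reproduces it nor supplies a working substitute for the attainment.)
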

\begin{proof}
Let $gH$-subdifferential $\partial \textbf{F}(\bar{x})$ of $\textbf{F}$ at $\bar{x} \in \mathcal{Y}$ is nonempty. Since $\textbf{F}$ is convex on $\mathcal{Y}$, the $gH$-directional derivative of $\textbf{F}$ at $\bar{x}$ in every direction $h \in \mathcal{Y}$ exists due to Theorem $3.1$ in \cite{Ghosh2019derivative}. By Lemma $3.1$ in \cite{Ghosh2019derivative}, we have
\begingroup\allowdisplaybreaks\begin{align*}
&\frac{1}{\lambda}\odot\textbf{F}(\bar{x}+\lambda h)\ominus_{gH}\textbf{F}(\bar{x}) \preceq\textbf{F}(\bar{x}+h)\ominus_{gH}\textbf{F}(\bar{x}),~\text{where}~\lambda>0\\
\Longrightarrow~ & \textbf{F}'(\bar{x})(h)\preceq\textbf{F}(\bar{x}+h)\ominus_{gH}\textbf{F}(\bar{x})
\end{align*}\endgroup
for all $h \in \mathcal{Y}$. Hence, in view of Lemma \ref{lddsg}, we obtain
\[
\textbf{F}'(\bar{x})(h)=\max \left\{\widehat{\textbf{G}}^T\odot h\mid \widehat{\textbf{G}}\in\partial \textbf{F}(\bar{x})\right\}~\text{for all}~h \in \mathcal{Y}.
\]
\end{proof}
Next, we show that $gH$-subdifferentials of a convex IVF are bounded and closed. To do so, we define a mapping $\mathcal{W}: I(\mathbb{R})^n \rightarrow \mathbb{R}^n$ by
\begin{equation}\label{wmap}
\mathcal{W}(\widehat{\textbf{A}})=\mathcal{W}(\textbf{A}_1, \textbf{A}_2, \cdots, \textbf{A}_n)=(w\underline{a}_1+w'\overline{a}_1, w\underline{a}_2+w'\overline{a}_2, \cdots, w\underline{a}_n+w'\overline{a}_n)^T,
\end{equation}
where $w$, $w'\in [0, 1]$ with $w+w'=1$.
\begin{lem}\label{lsgbd1}
For any $\widehat{\textbf{A}}\in I(\mathbb{R})^n$ and $d\in \mathbb{R}^n$,
\[
d^T\odot\widehat{\textbf{A}}\preceq [c, c]\Longrightarrow d^T\mathcal{W}\left(\widehat{\textbf{A}}\right)\leq 2c,
\]
where the map $\mathcal{W}$ is defined by (\ref{wmap}).
\end{lem}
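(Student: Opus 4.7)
The plan is to unfold $d^{T}\odot\widehat{\textbf{A}}$ into a single interval $[L,U]$, so that the interval inequality in the hypothesis decomposes into two scalar inequalities, and then to recognise $d^{T}\mathcal{W}(\widehat{\textbf{A}})$ as a convex combination of two quantities each controlled by those scalar inequalities.

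First, I would apply Moore's rule for scalar-interval multiplication to each term $d_i\odot[\underline{a}_i,\overline{a}_i]$: it equals $[d_i\underline{a}_i,d_i\overline{a}_i]$ when $d_i\geq 0$ and $[d_i\overline{a}_i,d_i\underline{a}_i]$ when $d_i<0$. Summing these intervals with $\oplus$ gives
\[
d^{T}\odot\widehat{\textbf{A}}=[L,U],\ \text{with}\ L=\sum_{d_i\geq 0}d_i\underline{a}_i+\sum_{d_i<0}d_i\overline{a}_i,\ U=\sum_{d_i\geq 0}d_i\overline{a}_i+\sum_{d_i<0}d_i\underline{a}_i.
\]
By the definition of $\preceq$, the hypothesis $[L,U]\preceq[c,c]$ is equivalent to the pair of scalar bounds $L\leq c$ and $U\leq c$, which on adding give $L+U\leq 2c$.

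Next I would expand
\[
d^{T}\mathcal{W}(\widehat{\textbf{A}})=\sum_{i=1}^{n}d_i\bigl(w\underline{a}_i+w'\overline{a}_i\bigr)=w\,S_1+w'\,S_2,
\]
where $S_1:=\sum_{i}d_i\underline{a}_i$ and $S_2:=\sum_{i}d_i\overline{a}_i$. The key observation is that $S_1$ and $S_2$ are values of the linear map $t\mapsto\sum_{i}d_it_i$ at two specific corners of the box $\prod_i[\underline{a}_i,\overline{a}_i]$, while $L$ and $U$ are respectively its minimum and maximum on that box; hence $L\leq S_j\leq U$ for $j=1,2$. In particular $S_1\leq c$ and $S_2\leq c$, and since $w,w'\geq 0$ with $w+w'=1$,
\[
d^{T}\mathcal{W}(\widehat{\textbf{A}})=w\,S_1+w'\,S_2\leq wc+w'c=c\leq 2c,
\]
the last inequality using $c\geq 0$, which is the regime in which the lemma is invoked downstream (where it feeds into norm estimates of the form $\lVert\cdot\rVert_{\widehat{\mathcal{Y}}}$).

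The main obstacle is the careful sign-split bookkeeping needed to write $L$ and $U$ explicitly and to verify that $L+U=S_1+S_2$ regardless of the signs of the $d_i$; once that is in hand, the rest of the argument reduces to recognising $d^{T}\mathcal{W}(\widehat{\textbf{A}})$ as a convex combination of $S_1$ and $S_2$ and invoking the two scalar bounds produced in the first step.
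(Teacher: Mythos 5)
Your proof follows essentially the same route as the paper's: split the indices by the sign of $d_i$, compute $d^T\odot\widehat{\textbf{A}}=[L,U]$ via Moore's scalar multiplication rule, read off $L\le c$ and $U\le c$ from the definition of $\preceq$, and treat $d^T\mathcal{W}(\widehat{\textbf{A}})$ as the convex combination $wS_1+w'S_2$ with $S_1=\sum_i d_i\underline{a}_i$, $S_2=\sum_i d_i\overline{a}_i$. The only divergence is the final step. The paper passes from $L\le U\le c$ (equivalently $S_1+S_2=L+U\le 2c$) directly to $wS_1+w'S_2\le 2c$ without justifying why the convex combination is dominated by the sum; you instead bound $S_1,S_2\le U\le c$, obtain the stronger estimate $wS_1+w'S_2\le c$, and then invoke $c\ge 0$ to reach $2c$. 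Your version is the more careful one: the implication as literally stated fails for $c<0$ (take $n=1$, $d=1$, $\textbf{A}=[-10,-5]$, $c=-5$, $w=0$, $w'=1$, giving $d\,\mathcal{W}(\textbf{A})=-5\not\le -10=2c$), so some nonnegativity assumption on $c$ is genuinely needed, and you are right that in the only place the lemma is used (Theorem \ref{tbd}) the constant $c$ can be taken nonnegative without loss. In short: correct, same strategy, and your explicit flagging of $c\ge 0$ repairs a step that the paper's own proof leaves unjustified.
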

\begin{proof}
Please see \ref{aplsgbd1}.
\end{proof}
\begin{lem}\label{lsgbd2}
For any $\widehat{\textbf{A}}\in I(\mathbb{R})^n$,
\[
\lVert\mathcal{W}\left(\widehat{\textbf{A}}\right)\rVert~ \text{is finite}\Longrightarrow {\lVert\widehat{\textbf{A}}\rVert}_{I(\mathbb{R})^n}~ \text{is finite},
\]
where the map $\mathcal{W}$ is defined by (\ref{wmap}).
\end{lem}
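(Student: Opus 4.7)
The plan is to reduce the statement to a coordinate-wise claim by unpacking the two norms, and then recover the finiteness of each interval endpoint $\underline{a}_i,\overline{a}_i$ from the hypothesis on $\mathcal{W}(\widehat{\textbf{A}})$.

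First, using Definitions \ref{irnorm} and \ref{irnnorm}, I would write out $\lVert\widehat{\textbf{A}}\rVert_{I(\mathbb{R})^n}=\sqrt{\sum_{i=1}^n\max\{|\underline{a}_i|,|\overline{a}_i|\}^2}$ and observe that the lemma reduces to showing that each $\max\{|\underline{a}_i|,|\overline{a}_i|\}$ is a finite real number: once this is done, the sum of $n$ finite nonnegative squares is finite and so is its square root. Similarly, from the defining formula \eqref{wmap} and the Euclidean norm on $\mathbb{R}^n$, I would unpack $\lVert\mathcal{W}(\widehat{\textbf{A}})\rVert=\sqrt{\sum_{i=1}^n (w\underline{a}_i+w'\overline{a}_i)^2}$, so the hypothesis immediately gives $w\underline{a}_i+w'\overline{a}_i\in\mathbb{R}$ for every $i$.

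Next, I would exploit the freedom built into the definition of $\mathcal{W}$, whose weights satisfy only $w,w'\in[0,1]$ with $w+w'=1$. Taking $(w,w')=(1,0)$ makes the $i$-th coordinate of $\mathcal{W}(\widehat{\textbf{A}})$ equal to $\underline{a}_i$, while taking $(w,w')=(0,1)$ makes it $\overline{a}_i$. Finiteness of $\lVert\mathcal{W}(\widehat{\textbf{A}})\rVert$ under each of these extremal selections therefore forces $|\underline{a}_i|<\infty$ and $|\overline{a}_i|<\infty$ for every $i$. Consequently $\max\{|\underline{a}_i|,|\overline{a}_i|\}$ is finite for each $i$, and the desired conclusion ${\lVert\widehat{\textbf{A}}\rVert}_{I(\mathbb{R})^n}<\infty$ follows.

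The delicate point I would flag is the interpretation of the hypothesis. If the weights $w,w'$ in $\mathcal{W}$ were regarded as a single fixed pair (say $w=w'=\tfrac12$), then no universal bound can hold: the intervals $\textbf{A}=[-k,k]$ give $w\underline{a}+w'\overline{a}=0$ while $\max\{|\underline{a}|,|\overline{a}|\}=k\to\infty$. This forces the reading in which the admissible weight set includes the endpoint pairs $(1,0)$ and $(0,1)$, which are precisely the values exploited above; once this interpretation is accepted, the proof is essentially a coordinate extraction.
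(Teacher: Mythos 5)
Your proof is correct and follows the same basic route as the paper's: unpack both norms and reduce the claim to finiteness of every endpoint $\underline{a}_i$, $\overline{a}_i$. The difference lies in how that reduction is justified. The paper's proof is a one-line assertion --- from finiteness of $\lVert\mathcal{W}(\widehat{\textbf{A}})\rVert$ it simply declares that all $\underline{a}_i$'s and $\overline{a}_i$'s are finite --- whereas you supply an actual mechanism, evaluating $\mathcal{W}$ at the extremal weight pairs $(w,w')=(1,0)$ and $(w,w')=(0,1)$ to extract each endpoint separately. Your ``delicate point'' is well observed, but note that your counterexample $\textbf{A}=[-k,k]$ with $k\to\infty$ is a sequence, not a single element of $I(\mathbb{R})$: since $I(\mathbb{R})$ consists of compact intervals, every endpoint of a fixed $\widehat{\textbf{A}}$ is a real number and hence finite regardless of which weights are used, so the lemma as literally stated is true (indeed nearly vacuous) under either reading of $\mathcal{W}$, and the fixed-weight reading is not actually excluded. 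The place where the distinction genuinely bites is the application in Theorem \ref{tbd}, where what is really needed is a uniform bound on $\lVert\widehat{\textbf{G}}_k\rVert_{I(\mathbb{R})^n}$ along a sequence; there a fixed interior weight pair does fail (precisely by your $[-k,k]$ example), and only the extremal-weight reading you adopt --- or a quantitative version of the lemma of the form $\lVert\widehat{\textbf{A}}\rVert_{I(\mathbb{R})^n}\leq C\,\lVert\mathcal{W}(\widehat{\textbf{A}})\rVert$ --- would support that argument. In that sense your version is closer to what the paper actually needs than the paper's own proof is.
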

\begin{proof}
Please see \ref{aplsgbd2}.
\end{proof}
\begin{thm}\label{tbd}
Let $\mathcal{X}$ be a compact convex subset of $\mathbb{R}^n$ and $\textbf{F}:\mathcal{X} \rightarrow \mathbb{R}$ be a convex IVF. Then, the set  $\bigcup\limits_{x\in\mathcal{X}}\partial\textbf{F}(x)$ is bounded.
\end{thm}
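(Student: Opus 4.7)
The plan is to reduce the claim, via Lemma \ref{lsgbd2}, to a uniform real-valued bound on $\lVert\mathcal{W}(\widehat{\textbf{G}})\rVert$ as $\widehat{\textbf{G}}$ ranges over $\bigcup_{x\in\mathcal{X}}\partial\textbf{F}(x)$, where $\mathcal{W}$ is the map defined in (\ref{wmap}); that bound will follow from the $gH$-subgradient inequality (\ref{esg}) coupled with a uniform norm-bound on $\textbf{F}$ over the compact set $\mathcal{X}$, passed through Lemma \ref{lsgbd1}.

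First I would establish that $\textbf{F}$ is uniformly norm-bounded on $\mathcal{X}$. By Lemma \ref{lc1} both $\underline{f}$ and $\overline{f}$ are convex, and by Theorem \ref{tc} the IVF $\textbf{F}$ is $gH$-continuous on the interior of $\mathcal{X}$; compactness of $\mathcal{X}$ then yields $M>0$ with $\lVert\textbf{F}(x)\rVert_{I(\mathbb{R})}\le M$ for every $x\in\mathcal{X}$, and hence
\[
\textbf{F}(y)\ominus_{gH}\textbf{F}(x)\preceq[2M,\,2M]\qquad\text{for all } x,y\in\mathcal{X}.
\]

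Next, for any $x\in\mathcal{X}$ and any $\widehat{\textbf{G}}\in\partial\textbf{F}(x)$, the subgradient inequality (\ref{esg}) combined with the previous display gives $(y-x)^T\odot\widehat{\textbf{G}}\preceq[2M,2M]$ for every $y\in\mathcal{X}$. Applying Lemma \ref{lsgbd1} with $d=y-x$ and $c=2M$ converts this into the real-valued inequality
\[
(y-x)^T\mathcal{W}(\widehat{\textbf{G}})\le 4M \quad\text{for every}~y\in\mathcal{X}.
\]

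Finally, to convert this into a norm bound on $\mathcal{W}(\widehat{\textbf{G}})$ I would pick $y$ so that $y-x$ aligns with $\mathcal{W}(\widehat{\textbf{G}})$. Fix $x_0\in\operatorname{int}(\mathcal{X})$ and $r>0$ with $B(x_0,r)\subseteq\mathcal{X}$, and substitute $y=x_0+r\,\mathcal{W}(\widehat{\textbf{G}})/\lVert\mathcal{W}(\widehat{\textbf{G}})\rVert\in\mathcal{X}$ (the case $\mathcal{W}(\widehat{\textbf{G}})=0$ being trivial), obtaining $r\lVert\mathcal{W}(\widehat{\textbf{G}})\rVert\le 4M+(x-x_0)^T\mathcal{W}(\widehat{\textbf{G}})$. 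The principal obstacle is the last term, which can grow linearly in $\lVert\mathcal{W}(\widehat{\textbf{G}})\rVert$ when $x$ lies near the boundary of $\mathcal{X}$; the clean remedy is to use, either implicitly or explicitly, that $\textbf{F}$ is defined on an open superset of $\mathcal{X}$, so that a uniform inflation radius $\rho>0$ is available with $x+\rho u$ in the domain of $\textbf{F}$ for every $x\in\mathcal{X}$ and every unit $u$. Then the choice $y=x+\rho\,\mathcal{W}(\widehat{\textbf{G}})/\lVert\mathcal{W}(\widehat{\textbf{G}})\rVert$ in the subgradient inequality yields $\rho\lVert\mathcal{W}(\widehat{\textbf{G}})\rVert\le 4M'$ for an enlarged bound $M'$, and Lemma \ref{lsgbd2} closes the argument.
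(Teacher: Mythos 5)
Your proposal follows essentially the same route as the paper: bound $\textbf{F}(y)\ominus_{gH}\textbf{F}(x)$ uniformly over the compact set, push the subgradient inequality through Lemma \ref{lsgbd1} to get a scalar bound on $d^T\mathcal{W}(\widehat{\textbf{G}})$, choose $d$ aligned with $\mathcal{W}(\widehat{\textbf{G}})$, and finish with Lemma \ref{lsgbd2}. The paper phrases this as a proof by contradiction with an unbounded sequence $\{\widehat{\textbf{G}}_k\}$ and takes $d_k=\mathcal{W}(\widehat{\textbf{G}}_k)/\lVert\mathcal{W}(\widehat{\textbf{G}}_k)\rVert$, but the contradiction framing is cosmetic; the substance is identical to your direct argument. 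Two remarks on the comparison. First, the obstacle you flag at the end is real and is in fact present, unaddressed, in the paper's own proof: the paper evaluates $\textbf{F}(x_k+d_k)$ with $\lVert d_k\rVert=1$ without verifying that $x_k+d_k\in\mathcal{X}$, which fails whenever $x_k$ is within distance $1$ of the boundary. Your proposed remedy (a uniform inflation radius, i.e.\ $\textbf{F}$ defined on an open superset of $\mathcal{X}$) repairs this but strengthens the hypotheses of Theorem \ref{tbd}; an alternative staying within the stated hypotheses is to restrict the union to a compact subset of the interior of the domain, as is standard for the real-valued analogue. Second, both you and the paper lean on Lemma \ref{lsgbd2} for the final step, but that lemma only converts finiteness of $\lVert\mathcal{W}(\widehat{\textbf{A}})\rVert$ into finiteness of $\lVert\widehat{\textbf{A}}\rVert_{I(\mathbb{R})^n}$; it does not convert a \emph{uniform} bound into a uniform bound, and it cannot, since for $w=w'=\tfrac12$ the map $\mathcal{W}$ sends every symmetric interval $[-t,t]$ to $0$. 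This last gap is inherited from the paper rather than introduced by you, but neither argument is complete without closing it.
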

\begin{proof}
We claim that the set  $\bigcup\limits_{x\in\mathcal{X}}\partial\textbf{F}(x)$ is bounded. On contrary, there exists a sequence $\{x_k\}$ on $\mathcal{X}$ and an unbounded sequence $\{\widehat{\textbf{G}}_k\}$, where $\widehat{\textbf{G}}_k \in \partial\textbf{F}(x_k)$, such that
\[
0<\lVert\widehat{\textbf{G}}_k\rVert< \lVert\widehat{\textbf{G}}_{k+1}\rVert, ~k\in\mathbb{N}.
\]
Let us take $d_k=\frac{\mathcal{W}\left(\widehat{\textbf{G}}_k\right)} {\lVert\mathcal{W}\left(\widehat{\textbf{G}}_k\right)\rVert}$, where the mapping $\mathcal{W}$ is defined by (\ref{wmap}). By Definition \ref{dsg} we have
\begingroup\allowdisplaybreaks\begin{align*}
&d_k^T\odot\widehat{\textbf{G}}_k\preceq\textbf{F}(x_k+d_k)\ominus_{gH} \textbf{F}(x_k) = \max\left\{\underline{f}(x_k+d_k)-\underline{f}(x_k), ~\overline{f}(x_k+d_k)-\overline{f}(x_k)\right\} \\
\Longrightarrow~ & d_k^T\odot\widehat{\textbf{G}}_k\preceq [c, c],~\text{where}~ \max\left\{\overline{f}(x)\right | x \in \mathcal{X}\} \le c\\
\Longrightarrow~ & d_k^T\mathcal{W}\left(\widehat{\textbf{G}}_k\right)\leq 2c,~\text{by Lemma \ref{lsgbd1}}\\
\Longrightarrow~ & \lVert\mathcal{W}\left(\widehat{\textbf{G}}_k\right)\rVert\leq 2c.
\end{align*}\endgroup
Since $\textbf{F}$ is convex on $\mathbb{R}^n$, in view of Theorem \ref{tc} and Lemma \ref{lc2}, $\underline{f}$ and $\overline{f}$ are continuous on $\mathbb{R}^n$. As $\{x_k\}$ and $\{d_k\}$ are bounded and the real-valued functions $\underline{f}$ and $\overline{f}$ are continuous, by the property of real-valued function, $c$ is finite. Thus, $\lVert\mathcal{W}\left(\widehat{\textbf{G}}_k\right)\rVert$ is finite and hence, due to Lemma \ref{lsgbd2}, $\lVert\widehat{\textbf{G}}_k\rVert_{I(\mathbb{R}^n)}$ is finite. Therefore, the sequence $\{\widehat{\textbf{G}}_k\}$ is bounded, which is a contradiction. Hence, the set $\bigcup\limits_{x\in\mathcal{X}}\partial\textbf{F}(x)$ is bounded.
\end{proof}
\begin{thm}\label{tcd}
Let $\mathcal{X}$ be a nonempty convex subset of $\mathbb{R}^n$ and $\textbf{F}$ be a convex IVF on $\mathcal{X}$. Then, for every $\bar{x}\in\mathcal{X}$, $ \partial \textbf{F}(\bar{x})$ is closed.
\end{thm}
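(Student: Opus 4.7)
The natural strategy is to show closedness by a sequential argument: take an arbitrary sequence $\{\widehat{\textbf{G}}_k\}\subseteq \partial\textbf{F}(\bar{x})$ converging in $I(\mathbb{R})^n$ to some $\widehat{\textbf{G}}$, and verify that $\widehat{\textbf{G}}$ itself satisfies the $gH$-subgradient inequality at $\bar{x}$. The whole proof is essentially a limit-passage in the defining inequality \eqref{esg} of Definition \ref{dsg}.

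First I would fix $\bar{x}\in\mathcal{X}$, pick $\{\widehat{\textbf{G}}_k\}\subseteq\partial\textbf{F}(\bar{x})$ with $\widehat{\textbf{G}}_k\to\widehat{\textbf{G}}\in I(\mathbb{R})^n$, and write $\widehat{\textbf{G}}_k=(\textbf{G}_{k1},\dots,\textbf{G}_{kn})$ and $\widehat{\textbf{G}}=(\textbf{G}_1,\dots,\textbf{G}_n)$. By Remark \ref{ncs}, componentwise convergence of the endpoints holds: $\underline{g_{ki}}\to\underline{g_i}$ and $\overline{g_{ki}}\to\overline{g_i}$ for every $i$. Since each $\widehat{\textbf{G}}_k$ is a $gH$-subgradient,
\begin{equation}\label{esubk}
(x-\bar{x})^T\odot\widehat{\textbf{G}}_k\preceq \textbf{F}(x)\ominus_{gH}\textbf{F}(\bar{x})\quad\text{for every }x\in\mathcal{X}\text{ and every }k.
\end{equation}

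The second step is to establish that, for each fixed $x\in\mathcal{X}$, the left-hand side of \eqref{esubk} converges to $(x-\bar{x})^T\odot\widehat{\textbf{G}}$ as $k\to\infty$. Because the endpoints of a product $(x_i-\bar{x}_i)\odot\textbf{G}_{ki}$ are given by $\min$/$\max$ of $\{(x_i-\bar{x}_i)\underline{g_{ki}},\,(x_i-\bar{x}_i)\overline{g_{ki}}\}$, which are continuous in $(\underline{g_{ki}},\overline{g_{ki}})$, and because $\oplus$ acts on endpoints by ordinary addition, the endpoints of $(x-\bar{x})^T\odot\widehat{\textbf{G}}_k$ depend continuously on the $2n$ numbers $\{\underline{g_{ki}},\overline{g_{ki}}\}$. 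Consequently the endpoints of $(x-\bar{x})^T\odot\widehat{\textbf{G}}_k$ converge to those of $(x-\bar{x})^T\odot\widehat{\textbf{G}}$.

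Finally, I would pass to the limit in the ordering relation. Writing $(x-\bar{x})^T\odot\widehat{\textbf{G}}_k=[\underline{p}_k,\overline{p}_k]$ and $\textbf{F}(x)\ominus_{gH}\textbf{F}(\bar{x})=[\underline{q},\overline{q}]$ (the right-hand side is independent of $k$), relation \eqref{esubk} gives $\underline{p}_k\leq\underline{q}$ and $\overline{p}_k\leq\overline{q}$ for every $k$. Since these are real inequalities preserved under convergence, taking $k\to\infty$ yields $\underline{p}\leq\underline{q}$ and $\overline{p}\leq\overline{q}$, i.e., $(x-\bar{x})^T\odot\widehat{\textbf{G}}\preceq\textbf{F}(x)\ominus_{gH}\textbf{F}(\bar{x})$. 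Since $x\in\mathcal{X}$ was arbitrary, $\widehat{\textbf{G}}\in\partial\textbf{F}(\bar{x})$, proving closedness.

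I do not anticipate serious obstacles here: the only mildly technical point is the joint continuity of the interval operations $\odot$ and $\oplus$ in the endpoint topology, which is a routine consequence of the explicit Moore formulas recalled in Section 2. Everything else is sequential limit-passage, using Remark \ref{ncs} to reduce convergence in $I(\mathbb{R})^n$ to convergence of endpoints, and using the endpoint-wise nature of $\preceq$ to preserve the inequality in the limit.
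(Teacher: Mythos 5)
Your proposal is correct and follows essentially the same route as the paper's proof: a sequential argument that reduces convergence in $I(\mathbb{R})^n$ to endpoint convergence via Remark \ref{ncs}, passes to the limit in the two real inequalities encoding $\preceq$, and reassembles the interval inequality. The only cosmetic difference is that the paper makes the endpoint formulas explicit by splitting the components of $x-\bar{x}$ by sign, whereas you invoke continuity of the $\min/\max$ endpoint expressions; these amount to the same computation.
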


\begin{proof}
Let $\left\{\widehat{\textbf{G}}_{k}\right\}$ be an arbitrary sequence in $\partial \textbf{F}(x)$ which converges to $\widehat{\textbf{G}}\in I(\mathbb{R})^n$, where $\widehat{\textbf{G}}_k=\left(\textbf{G}_{k1}, \textbf{G}_{k2}, \cdots, \textbf{G}_{kn}\right)$ and $\widehat{\textbf{G}}=\left(\textbf{G}_{1}, \textbf{G}_{2}, \cdots, \textbf{G}_{n}\right)$.\\

\noindent Since, $\widehat{\textbf{G}}_k\in \partial \textbf{F}(\bar{x})$, for all $d\in\mathcal{X}$ we have
\[
d^T\odot \widehat{\textbf{G}}_k\preceq \textbf{F}(\bar{x}+d)\ominus_{gH} \textbf{F}(\bar{x}),
\]
i.e.,
\begin{equation}\label{ec1}
\bigoplus_{i=1}^n d_i\odot \textbf{G}_{ki}\preceq \textbf{F}(\bar{x}+d)\ominus_{gH} \textbf{F}(\bar{x}).
\end{equation}
Due to Remark \ref{ria1}, without loss of generality, let the first $p$ components of $d$ be nonnegative and the rest $n-p$ components be negative. Therefore, from  (\ref{ec1}), we get
\begingroup\allowdisplaybreaks\begin{align*}
&\bigoplus_{i=1}^p d_i\odot \textbf{G}_{ki}\oplus\bigoplus_{j=p+1}^n d_j\odot \textbf{G}_{kj}\preceq \textbf{F}(\bar{x}+d)\ominus_{gH} \textbf{F}(\bar{x})\\
\Longrightarrow~ & \bigoplus_{i=1}^p \left[\underline{g_{ki}}d_i, \overline{g_{ki}}d_i\right]\oplus\bigoplus_{j=p+1}^n \left[\overline{g_{kj}}d_j, \underline{g_{kj}}d_j\right]\preceq \textbf{F}(\bar{x}+d)\ominus_{gH} \textbf{F}(\bar{x})\\
\Longrightarrow~ & \left[\sum_{i=1}^p\underline{g_{ki}}d_i+\sum_{j=p+1}^n\overline{g_{kj}}d_j, \sum_{i=1}^p\overline{g_{ki}}d_i+\sum_{j=p+1}^n\underline{g_{kj}}d_j\right]\preceq \textbf{F}(\bar{x}+d)\ominus_{gH} \textbf{F}(\bar{x}).
\end{align*}\endgroup
Therefore, we get
\begin{equation}\label{ec2}
\sum_{i=1}^p\underline{g_{ki}}d_i+\sum_{j=p+1}^n\overline{g_{kj}}d_j\leq \min\left\{\underline{f}(\bar{x}+h)-\underline{f}(\bar{x}),\overline{f}(\bar{x}+h)-\overline{f}(\bar{x})\right\}
\end{equation}
\begin{equation}\label{ec3}
\sum_{i=1}^p\overline{g_{ki}}d_i+\sum_{j=p+1}^n\underline{g_{kj}}d_j\leq \max\left\{\underline{f}(\bar{x}+h)-\underline{f}(\bar{x}),\overline{f}(\bar{x}+h)-\overline{f}(\bar{x})\right\}
\end{equation}
Since the sequence $\left\{\widehat{\textbf{G}}_{k}\right\}$ converges to $\widehat{\textbf{G}}$, in view of Remark \ref{ncs}, the sequences $\left\{\underline{g_{ki}}\right\}$ and $\left\{\overline{g_{ki}}\right\}$ converge to $\underline{g_i}$ and $\overline{g_i}$, respectively, for all $i$. Thus, by (\ref{ec2}) and (\ref{ec3}), we have

\begingroup\allowdisplaybreaks\begin{align*}
\left(\sum_{i=1}^p\underline{g_{ki}}d_i+\sum_{j=p+1}^n\overline{g_{kj}}d_j\right) \to \left(\sum_{i=1}^p\underline{g_i}d_i+\sum_{j=p+1}^n\overline{g_j}d_j\right) \leq \min & \Big\{\underline{f}(\bar{x}+h)-\underline{f}(\bar{x}),\\
&~~~~\overline{f}(\bar{x}+h)-\overline{f}(\bar{x})\Big\}
\end{align*}\endgroup
and

\begingroup\allowdisplaybreaks\begin{align*}
\left(\sum_{i=1}^p\overline{g_{ki}}d_i+\sum_{j=p+1}^n\underline{g_{kj}}d_j\right)
\to
\left(\sum_{i=1}^p\overline{g_i}d_i+\sum_{j=p+1}^n\underline{g_j}d_j\right) \leq \max & \Big\{\underline{f}(\bar{x}+h)-\underline{f}(\bar{x}),\\
&~~~~\overline{f}(\bar{x}+h)-\overline{f}(\bar{x})\Big\}.
\end{align*}\endgroup
Hence,
\begingroup\allowdisplaybreaks\begin{align*}
&\left[\sum_{i=1}^p\underline{g_i}d_i+\sum_{j=p+1}^n\overline{g_j}d_j, \sum_{i=1}^p\overline{g_i}d_i+\sum_{j=p+1}^n\underline{g_j}d_j\right]\preceq \textbf{F}(\bar{x}+d)\ominus_{gH} \textbf{F}(\bar{x})\\
\Longrightarrow~ & \bigoplus_{i=1}^p \left[\underline{g_i}d_i, \overline{g_i}d_i\right]\oplus\bigoplus_{j=p+1}^n \left[\overline{g_j}d_j, \underline{g_j}d_j\right]\preceq \textbf{F}(\bar{x}+d)\ominus_{gH} \textbf{F}(\bar{x})\\
\Longrightarrow~ &\bigoplus_{i=1}^p d_i\odot \textbf{G}_{i}\oplus\bigoplus_{j=p+1}^n d_j\odot \textbf{G}_{j}\preceq \textbf{F}(\bar{x}+d)\ominus_{gH} \textbf{F}(\bar{x})\\
\Longrightarrow~ & d^T\odot \widehat{\textbf{G}}\preceq \textbf{F}(\bar{x}+d)\ominus_{gH} \textbf{F}(\bar{x})
\end{align*}\endgroup
for all $d\in\mathcal{X}$. Therefore, $\widehat{\textbf{G}} \in \partial \textbf{F}(\bar{x})$ and hence, $ \partial \textbf{F}(\bar{x})$ is closed.
\end{proof}
In the following theorem, we prove that if a convex IVF has $gH$-subgradients in all over its domain, then the IVF is $gH$-Lipschitz continuous on its domain.

\begin{lem}\label{llc1}
For any $x \in \mathbb{R}^n$ and $\widehat{\textbf{A}}=\left(\textbf{A}_1, \textbf{A}_2, \cdots, \textbf{A}_n\right)\in I(\mathbb{R})^n$,
\[
x^T\odot\widehat{\textbf{A}}\preceq \lVert x \rVert\odot\left[\lVert \widehat{\textbf{A}} \rVert _{I(\mathbb{R})^n}, \lVert \widehat{\textbf{A}} \rVert _{I(\mathbb{R})^n}\right].
\]
\end{lem}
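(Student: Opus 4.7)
The plan is to unpack both sides in terms of the endpoints of the component intervals and reduce the claim to a Cauchy--Schwarz estimate on $\mathbb{R}^n$.

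First I would write out the left-hand side explicitly using the definition of $\odot$ and $\oplus$:
\[
x^T\odot\widehat{\textbf{A}}=\bigoplus_{i=1}^n x_i\odot\textbf{A}_i=\left[\sum_{i=1}^n\min\{x_i\underline{a}_i,x_i\overline{a}_i\},\ \sum_{i=1}^n\max\{x_i\underline{a}_i,x_i\overline{a}_i\}\right].
\]
Since $\lVert x\rVert\odot[\lVert\widehat{\textbf{A}}\rVert_{I(\mathbb{R})^n},\lVert\widehat{\textbf{A}}\rVert_{I(\mathbb{R})^n}]$ is a degenerate interval with both endpoints equal to $\lVert x\rVert\,\lVert\widehat{\textbf{A}}\rVert_{I(\mathbb{R})^n}$, verifying $\preceq$ amounts to showing the two inequalities
\[
\sum_{i=1}^n\min\{x_i\underline{a}_i,x_i\overline{a}_i\}\le\lVert x\rVert\,\lVert\widehat{\textbf{A}}\rVert_{I(\mathbb{R})^n}\quad\text{and}\quad\sum_{i=1}^n\max\{x_i\underline{a}_i,x_i\overline{a}_i\}\le\lVert x\rVert\,\lVert\widehat{\textbf{A}}\rVert_{I(\mathbb{R})^n}.
\]

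Next I would observe the pointwise bound
\[
\max\{x_i\underline{a}_i,x_i\overline{a}_i\}\le |x_i|\max\{|\underline{a}_i|,|\overline{a}_i|\}=|x_i|\,\lVert\textbf{A}_i\rVert_{I(\mathbb{R})},
\]
which automatically gives the same upper bound for the $\min$ term. Summing over $i$ and applying the Cauchy--Schwarz inequality in $\mathbb{R}^n$ to the vectors $(|x_i|)_i$ and $(\lVert\textbf{A}_i\rVert_{I(\mathbb{R})})_i$ yields
\[
\sum_{i=1}^n |x_i|\,\lVert\textbf{A}_i\rVert_{I(\mathbb{R})}\le\sqrt{\sum_{i=1}^n x_i^2}\cdot\sqrt{\sum_{i=1}^n\lVert\textbf{A}_i\rVert_{I(\mathbb{R})}^2}=\lVert x\rVert\,\lVert\widehat{\textbf{A}}\rVert_{I(\mathbb{R})^n},
\]
where the last equality uses Definition \ref{irnnorm}. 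Combining the two estimates establishes both endpoint inequalities, hence the desired dominance.

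There is no real obstacle here; the only mild technicality is keeping the sign cases for $x_i$ straight when identifying $\min$ and $\max$ with the endpoints of $x_i\odot\textbf{A}_i$, and ensuring that the bound by $|x_i|\,\lVert\textbf{A}_i\rVert_{I(\mathbb{R})}$ is uniform in the sign of $x_i$. Once that observation is made, the result is an immediate consequence of Cauchy--Schwarz.
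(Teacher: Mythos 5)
Your proof is correct, and it takes a genuinely different route from the paper's. The paper sets $\textbf{B}=x^T\odot\widehat{\textbf{A}}$, uses $\textbf{B}\preceq[\lVert\textbf{B}\rVert_{I(\mathbb{R})},\lVert\textbf{B}\rVert_{I(\mathbb{R})}]$, and then bounds $\lVert\textbf{B}\rVert_{I(\mathbb{R})}$ by the triangle inequality and homogeneity of the interval norm, arriving at $\sum_i\lvert x_i\rvert\,\lVert\textbf{A}_i\rVert_{I(\mathbb{R})}\le\lVert x\rVert\sum_i\lVert\textbf{A}_i\rVert_{I(\mathbb{R})}$, which it then \emph{equates} with $\lVert x\rVert\,\lVert\widehat{\textbf{A}}\rVert_{I(\mathbb{R})^n}$. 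That last step is actually flawed: by Definition \ref{irnnorm} the norm on $I(\mathbb{R})^n$ is the $\ell^2$ aggregate $\sqrt{\sum_i\lVert\textbf{A}_i\rVert_{I(\mathbb{R})}^2}$, whereas $\sum_i\lVert\textbf{A}_i\rVert_{I(\mathbb{R})}$ is the $\ell^1$ aggregate, and the inequality between them goes the wrong way for the paper's argument. Your approach — writing out both endpoints of $x^T\odot\widehat{\textbf{A}}$ explicitly, bounding each term by $\lvert x_i\rvert\,\lVert\textbf{A}_i\rVert_{I(\mathbb{R})}$, and then applying Cauchy--Schwarz to the vectors $(\lvert x_i\rvert)_i$ and $(\lVert\textbf{A}_i\rVert_{I(\mathbb{R})})_i$ — lands exactly on $\lVert x\rVert\sqrt{\sum_i\lVert\textbf{A}_i\rVert_{I(\mathbb{R})}^2}$ and therefore closes the argument cleanly where the paper's does not. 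The same Cauchy--Schwarz step could equally be grafted onto the paper's norm-based argument (replace its last two lines by $\sum_i\lvert x_i\rvert\,\lVert\textbf{A}_i\rVert_{I(\mathbb{R})}\le\lVert x\rVert\,\lVert\widehat{\textbf{A}}\rVert_{I(\mathbb{R})^n}$), but as written your version is the one that is fully correct.
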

\begin{proof}
Please see \ref{apllc1}.
\end{proof}
\begin{lem}\label{llc2}
Let $\mathcal{X}$ be a nonempty subset of $\mathbb{R}^n$ and $\textbf{F}$ be an IVF on $\mathcal{X}$ such that
\[
\textbf{F}(x)\ominus_{gH}\textbf{F}(y)\preceq \textbf{C} \odot \lVert x-y \rVert~\text{for all}~ x,~y\in\mathcal{X},
\]
where $\textbf{C}=[\underline{c}, \overline{c}]=[c, c]$. Then,
\[
\lVert \textbf{F}(x)\ominus_{gH}\textbf{F}(y)\rVert_{I(\mathbb{R})}\leq c\lVert x-y \rVert~\text{for all}~ x,~y\in \mathcal{X}.
\]
\end{lem}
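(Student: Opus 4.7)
The plan is to unpack both sides of the dominance hypothesis componentwise and exploit the symmetry under interchange of $x$ and $y$ to recover a two-sided bound from a one-sided one.

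First I would set $\textbf{F}(x)\ominus_{gH}\textbf{F}(y) = [\alpha,\beta]$, where by the definition of $\ominus_{gH}$,
\[
\alpha = \min\{\underline{f}(x)-\underline{f}(y),\ \overline{f}(x)-\overline{f}(y)\},\quad \beta = \max\{\underline{f}(x)-\underline{f}(y),\ \overline{f}(x)-\overline{f}(y)\}.
\]
Since $\textbf{C}=[c,c]$, we have $\textbf{C}\odot\lVert x-y\rVert = [c\lVert x-y\rVert, c\lVert x-y\rVert]$. Translating the hypothesis $\textbf{F}(x)\ominus_{gH}\textbf{F}(y)\preceq \textbf{C}\odot\lVert x-y\rVert$ by the dominance relation $\preceq$ then gives $\alpha\le c\lVert x-y\rVert$ and $\beta\le c\lVert x-y\rVert$.

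The key step is to swap $x$ and $y$ in the hypothesis. A direct inspection of the formula for $\ominus_{gH}$ shows that $\textbf{F}(y)\ominus_{gH}\textbf{F}(x) = [-\beta,-\alpha]$, so applying the hypothesis with the roles of $x$ and $y$ exchanged (and noting $\lVert y-x\rVert = \lVert x-y\rVert$) yields $-\beta\le c\lVert x-y\rVert$ and $-\alpha\le c\lVert x-y\rVert$. Combining with the one-sided bounds above gives $|\alpha|\le c\lVert x-y\rVert$ and $|\beta|\le c\lVert x-y\rVert$.

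Finally, by the definition of the norm on $I(\mathbb{R})$,
\[
\lVert \textbf{F}(x)\ominus_{gH}\textbf{F}(y)\rVert_{I(\mathbb{R})} = \max\{|\alpha|,|\beta|\}\le c\lVert x-y\rVert,
\]
which is the desired conclusion. There is no serious obstacle here; the only subtlety is to recognize that the hypothesis is a one-sided inequality, so one must use the $x\leftrightarrow y$ symmetry of $\lVert x-y\rVert$ together with the sign-reversing behaviour of $\ominus_{gH}$ under argument swap to upgrade it to the required two-sided absolute value bound.
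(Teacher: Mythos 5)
Your proof is correct and follows essentially the same route as the paper's: both obtain the upper bounds on the endpoint differences from the hypothesis, then interchange $x$ and $y$ to get the matching lower bounds, and conclude via the $\max$-of-absolute-values formula for $\lVert\cdot\rVert_{I(\mathbb{R})}$. Your bookkeeping via the endpoints $\alpha,\beta$ of the $gH$-difference and the identity $\textbf{F}(y)\ominus_{gH}\textbf{F}(x)=[-\beta,-\alpha]$ is just a slightly more explicit packaging of the same argument.
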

\begin{proof}
Please see \ref{apllc2}.
\end{proof}
\begin{thm}\label{tlc}
Let $\mathcal{X}$ be a nonempty compact convex subset of $\mathbb{R}^n$ and $\textbf{F}$ be a convex IVF on $\mathcal{X}$ such that $\textbf{F}$ has $gH$-subgradient at every $x\in\mathcal{X}$. Then, $\textbf{F}$ is $gH$-Lipschitz continuous on $\mathcal{X}$.
\end{thm}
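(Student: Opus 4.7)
The plan is to derive the Lipschitz bound from the boundedness of the union of all $gH$-subdifferentials (Theorem \ref{tbd}) together with the $gH$-subgradient inequality, and then translate the resulting interval inequality into a norm inequality using the two lemmas already at our disposal (Lemma \ref{llc1} and Lemma \ref{llc2}). The outline has three ingredients: (i) extract a uniform scalar bound $L>0$ with $\lVert\widehat{\textbf{G}}\rVert_{I(\mathbb{R})^n}\le L$ for every $\widehat{\textbf{G}}\in\bigcup_{x\in\mathcal{X}}\partial\textbf{F}(x)$, which is immediate from Theorem \ref{tbd} once we verify its hypotheses (compact convex $\mathcal{X}$ and convex $\textbf{F}$ are both given); (ii) for arbitrary $x,y\in\mathcal{X}$, use a $gH$-subgradient of $\textbf{F}$ at $x$ (guaranteed to exist by hypothesis) to produce a dominance bound on $\textbf{F}(x)\ominus_{gH}\textbf{F}(y)$; (iii) convert that dominance bound to a norm bound by Lemma \ref{llc2}.

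For step (ii), I pick any $\widehat{\textbf{G}}\in\partial\textbf{F}(x)$. The defining inequality \eqref{esg} of Definition \ref{dsg} applied to the point $y$ gives
\[
(y-x)^T\odot\widehat{\textbf{G}}\preceq \textbf{F}(y)\ominus_{gH}\textbf{F}(x).
\]
Since $\textbf{F}(y)\ominus_{gH}\textbf{F}(x)=(-1)\odot\bigl(\textbf{F}(x)\ominus_{gH}\textbf{F}(y)\bigr)$ (a direct consequence of the definition of $\ominus_{gH}$ recalled in Section \ref{ssai}) and since multiplying by $-1$ reverses $\preceq$, I obtain the desired upper dominance bound
\[
\textbf{F}(x)\ominus_{gH}\textbf{F}(y)\preceq (x-y)^T\odot\widehat{\textbf{G}}.
\]
Next, Lemma \ref{llc1} bounds the right-hand side by $\lVert x-y\rVert\odot\bigl[\lVert\widehat{\textbf{G}}\rVert_{I(\mathbb{R})^n},\lVert\widehat{\textbf{G}}\rVert_{I(\mathbb{R})^n}\bigr]$, and the uniform bound from step (i) yields
\[
\textbf{F}(x)\ominus_{gH}\textbf{F}(y)\preceq [L,L]\odot\lVert x-y\rVert\quad\text{for all }x,y\in\mathcal{X}.
\]
Finally, applying Lemma \ref{llc2} with $\textbf{C}=[L,L]$ (so $c=L$) converts this into $\lVert\textbf{F}(x)\ominus_{gH}\textbf{F}(y)\rVert_{I(\mathbb{R})}\le L\lVert x-y\rVert$, which is exactly $gH$-Lipschitz continuity with constant $L$.

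The main obstacle, as I see it, is the careful handling of the sign/orientation flip in step (ii): the subgradient inequality produces a lower dominance bound on $\textbf{F}(y)\ominus_{gH}\textbf{F}(x)$, whereas Lemma \ref{llc2} requires an upper dominance bound on $\textbf{F}(x)\ominus_{gH}\textbf{F}(y)$, so one must invoke the identity $\textbf{F}(y)\ominus_{gH}\textbf{F}(x)=(-1)\odot(\textbf{F}(x)\ominus_{gH}\textbf{F}(y))$ together with the fact that scalar multiplication by $-1$ reverses $\preceq$. Once that orientation issue is handled cleanly, the remainder is essentially a two-line application of the two lemmas. (A minor side point is that Theorem \ref{tbd} is stated for $\mathcal{X}$ compact convex, which matches our hypothesis exactly, so no additional argument is required to invoke it.)
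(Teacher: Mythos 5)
Your proposal is correct and follows essentially the same route as the paper: apply the subgradient inequality at $x$ evaluated at $y$, flip the orientation via $(-1)\odot$ to get an upper dominance bound on $\textbf{F}(x)\ominus_{gH}\textbf{F}(y)$, and then chain Lemma \ref{llc1} and Lemma \ref{llc2} to obtain the norm estimate with $L=\sup\lVert\widehat{\textbf{G}}\rVert_{I(\mathbb{R})^n}$. If anything, you are slightly more careful than the paper in explicitly citing Theorem \ref{tbd} to justify that this supremum is finite.
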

\begin{proof}
Since $\textbf{F}$ has $gH$-subgradient at every $x\in\mathcal{X}$, there exists a $\widehat{\textbf{G}}\in I(\mathbb{R})^n$ such that
\begingroup\allowdisplaybreaks\begin{align*}\label{gh}
&(y-x)^T\odot\widehat{\textbf{G}}\preceq\textbf{F}(y)\ominus_{gH} \textbf{F}(x) \\
\Longrightarrow~ &(-1)\odot \left((x-y)^T\odot\widehat{\textbf{G}}\right)\preceq\textbf{F}(y)\ominus_{gH} \textbf{F}(x) \\
\Longrightarrow~ &\textbf{F}(x)\ominus_{gH} \textbf{F}(y)\preceq (x-y)^T\odot\widehat{\textbf{G}} \\
\Longrightarrow~ &\textbf{F}(x)\ominus_{gH} \textbf{F}(y)\preceq \lVert x-y \rVert\odot\left[\lVert \widehat{\textbf{G}} \rVert _{I(\mathbb{R})^n}, \lVert \widehat{\textbf{G}} \rVert _{I(\mathbb{R})^n}\right], ~\text{by Lemma \ref{llc1}}\\
\Longrightarrow~ &\lVert\textbf{F}(x)\ominus_{gH} \textbf{F}(y)\rVert_{I(\mathbb{R})}\leq \lVert \widehat{\textbf{G}} \rVert _{I(\mathbb{R})^n}\lVert x-y \rVert, ~\text{by Lemma \ref{llc2}.}
\end{align*}\endgroup
Considering $L=\sup\limits_{\widehat{\textbf{G}}\in \bigcup\limits_{x\in\mathcal{X}}\partial\emph{\textbf{F}}(x)}\lVert \widehat{\textbf{G}}\rVert_{I(\mathbb{R})^n}$, we have
\[
\lVert\textbf{F}(x)\ominus_{gH} \textbf{F}(y)\rVert_{I(\mathbb{R})}\leq L\lVert x-y \rVert ~\text{for all}~x,~y\in \mathcal{X}.
\]
Hence, $\textbf{F}$ is $gH$-Lipschitz continuous on $\mathcal{X}$.
\end{proof}
Now we show another two important characteristics of $gH$-subdifferential of a convex IVF.
\begin{thm}\emph{(Chain rule)}.\label{tcr}
Let $\mathcal{X}$ be a nonempty convex subset of $\mathbb{R}^n$ and an IVF $\textbf{F}$ be defined by
\[\textbf{F}(x)=\textbf{H}(Ax)~\text{for all}~ x\in\mathcal{X},
\]
where $\textbf{H}:\mathbb{R}^m \rightarrow I(\mathbb{R})$ is a convex IVF  and $A$ is an $m\times n$ matrix with real entries.
Then,
\[
\partial \textbf{F}(x)=\{A^T\odot\widehat{\textbf{G}}_m \mid \widehat{\textbf{G}}_m \in \partial\textbf{H}(Ax),~\text{where}~\widehat{\textbf{G}}_m \in I(\mathbb{R})^m~\text{and}~x\in\mathcal{X}\}.
\]
\end{thm}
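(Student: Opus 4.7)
The plan is to establish the two set inclusions separately, since they are of very different character. For the inclusion $\supseteq$, take any $\widehat{\textbf{G}}_m \in \partial \textbf{H}(Ax)$. Using the linear IVF formulation of $gH$-subgradients in Definition \ref{dsg}, I would identify $A^T \odot \widehat{\textbf{G}}_m$ with the $gH$-continuous linear IVF $\textbf{L}_x : \mathbb{R}^n \to I(\mathbb{R})$ defined by $\textbf{L}_x(h) = (Ah)^T \odot \widehat{\textbf{G}}_m$, which is linear and $gH$-continuous as the composition of the real linear map $h \mapsto Ah$ with the linear IVF $z \mapsto z^T \odot \widehat{\textbf{G}}_m$ from Remark \ref{nlivf}. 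For an arbitrary $y \in \mathcal{X}$, substituting $z = Ay$ into the subgradient inequality for $\textbf{H}$ at $Ax$ yields
\[
\textbf{L}_x(y - x) = (Ay - Ax)^T \odot \widehat{\textbf{G}}_m \preceq \textbf{H}(Ay) \ominus_{gH} \textbf{H}(Ax) = \textbf{F}(y) \ominus_{gH} \textbf{F}(x),
\]
so $\textbf{L}_x \in \partial \textbf{F}(x)$.

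For the inclusion $\subseteq$, I would pass through the directional-derivative characterization in Lemma \ref{lddsg}. A short computation from Definition \ref{ddd}, combined with $A(x + \lambda h) = Ax + \lambda(Ah)$, gives $\textbf{F}'(x)(h) = \textbf{H}'(Ax)(Ah)$ for every $h \in \mathbb{R}^n$. Hence, for any $\widehat{\textbf{G}} \in \partial \textbf{F}(x)$, the bound $h^T \odot \widehat{\textbf{G}} \preceq \textbf{H}'(Ax)(Ah)$ holds for all $h$. The plan is to first define a linear IVF $\widetilde{\textbf{L}}$ on $\mathrm{range}(A)$ by $\widetilde{\textbf{L}}(Ah) = h^T \odot \widehat{\textbf{G}}$; well-definedness on $\ker(A)$ reduces to checking that $h^T \odot \widehat{\textbf{G}} = \textbf{0}$ whenever $Ah = 0$, which follows by applying the subgradient inequality to both $h$ and $-h$, using that $\textbf{F}'(x)(h) = \textbf{0}$ along $\ker(A)$, and invoking the antisymmetry of $\preceq$. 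Next, $\widetilde{\textbf{L}}$ has to be extended from $\mathrm{range}(A)$ to all of $\mathbb{R}^m$ while preserving the sublinear bound $\widetilde{\textbf{L}}(z) \preceq \textbf{H}'(Ax)(z)$, so that the extension lies in $\partial \textbf{H}(Ax)$ and yields the required $\widehat{\textbf{G}}_m$.

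The main obstacle will be this extension step, which is the interval-valued analogue of the Hahn--Banach extension of a linear functional dominated by a sublinear one. In the classical real-valued setting, the corresponding chain rule requires a qualification such as $\mathrm{range}(A) \cap \mathrm{ri}(\mathrm{dom}\,\textbf{H}) \neq \emptyset$, which here is automatic since $\textbf{H}$ is defined on all of $\mathbb{R}^m$. I would therefore attempt the extension either by exploiting the convexity and continuity of the endpoint functions of $\textbf{H}$ (guaranteed by Lemma \ref{lc1} and Lemma \ref{lc2}) to apply the scalar Hahn--Banach theorem to each endpoint separately, or by constructing a direct separation argument tailored to the $\preceq$ ordering on $I(\mathbb{R})$. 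Once such an extension is available, the identification $\widehat{\textbf{G}} = A^T \odot \widehat{\textbf{G}}_m$ is immediate from the construction.
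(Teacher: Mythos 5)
Your first inclusion is correct and in fact coincides with the entirety of the paper's own argument: the paper's proof consists precisely of substituting $Ay$ into the subgradient inequality for $\textbf{H}$ at $Ax$ to conclude $A^T\odot\widehat{\textbf{G}}_m\in\partial\textbf{F}(x)$, and then asserts the set equality without ever addressing the reverse containment. So you have correctly isolated where the real difficulty lies, which is already more than the paper does.

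The gap is that your second inclusion is a plan rather than a proof, and its core step is left unresolved. The reduction to well-definedness on $\ker(A)$ is fine (the argument from $h$ and $-h$ together with antisymmetry of $\preceq$ does force $h^T\odot\widehat{\textbf{G}}=\textbf{0}$ there), but everything then hinges on extending $\widetilde{\textbf{L}}$ from $\mathrm{range}(A)$ to all of $\mathbb{R}^m$ while keeping it dominated by $\textbf{H}'(Ax)(\cdot)$ \emph{and} keeping it of the specific form $z\mapsto z^T\odot\widehat{\textbf{G}}_m$ for some $\widehat{\textbf{G}}_m\in I(\mathbb{R})^m$. Your suggested fallback of applying the scalar Hahn--Banach theorem to the two endpoint functions separately does not go through as stated: the endpoints of $z\mapsto z^T\odot\widehat{\textbf{G}}_m$ are $\sum_i\min\{z_i\underline{g}_i,\, z_i\overline{g}_i\}$ and $\sum_i\max\{z_i\underline{g}_i,\, z_i\overline{g}_i\}$, which are piecewise-linear (concave, respectively convex) rather than linear, so an endpoint-wise Hahn--Banach extension yields two linear functionals that in general cannot be reassembled into an interval vector acting by $\odot$, nor is the lower extension guaranteed to stay below the upper one pointwise. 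Until such an interval-valued extension theorem is actually established, the inclusion $\partial\textbf{F}(x)\subseteq\{A^T\odot\widehat{\textbf{G}}_m \mid \widehat{\textbf{G}}_m\in\partial\textbf{H}(Ax)\}$ remains unproven --- which, to be fair, is also its status in the paper itself.
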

\begin{proof}
By the definition of $gH$-subdifferentiability of $\textbf{H}$ at $A(x)$, for any $x\in\mathcal{X}$, we have a $\widehat{\textbf{G}}_m \in I(\mathbb{R})^m$ such that
\[
(Ay-Ax)^T \odot \widehat{\textbf{G}}_m\preceq \textbf{H}(Ay)\ominus_{gH} \textbf{H}(Ax) ~\text{for all}~ y\in \mathcal{X},
\]
which implies
\begingroup\allowdisplaybreaks\begin{align*}
&(A(y-x))^T \odot \widehat{\textbf{G}}_m\preceq \textbf{H}(Ay)\ominus_{gH} \textbf{H}(Ax) \nonumber \\
\Longrightarrow~& (y-x)^T \odot (A^T \odot\widehat{\textbf{G}}_m)\preceq \textbf{H}(Ay)\ominus_{gH} \textbf{H}(Ax) \nonumber \\
\Longrightarrow~& (y-x)^T \odot (A^T \odot\widehat{\textbf{G}}_m)\preceq \textbf{F}(y) \ominus_{gH} \textbf{F}(x).
\end{align*}\endgroup
Since $(A^T \odot\widehat{\textbf{G}}_m)\in I(\mathbb{R})^n$, by Definition \ref{dsg},
\[
\partial\textbf{F}(x)=\left\{A^T\odot\widehat{\textbf{G}}_m \mid \widehat{\textbf{G}}_m \in \partial\textbf{H}(Ax),~\text{where}~\widehat{\textbf{G}}_m \in I(\mathbb{R})^m~\text{and}~x\in\mathcal{X}\right\}.
\]
\end{proof}
\begin{thm}\emph{($gH$-subdifferential of a sum)}.\label{tss}
Let $\mathcal{X}$ be a nonempty convex subset of $\mathbb{R}^n$ and an IVF $\textbf{F}$ be defined by
\[
\textbf{F}(x)=\bigoplus_{i=1}^m\textbf{F}_i(x)~\text{for all}~ x\in\mathcal{X},
\]
where each $\textbf{F}_i:\mathcal{X}\rightarrow I(\mathbb{R})$ is a convex IVF. Then,
\[
\partial \textbf{F}(x)=\bigoplus_{i=1}^m\partial\textbf{F}_i(x)~\text{for all}~ x\in\mathcal{X}.
\]
\end{thm}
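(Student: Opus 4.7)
The plan is to establish the set equality by proving the two inclusions $\supseteq$ and $\subseteq$ separately, following the pattern of the classical Moreau--Rockafellar sum rule.

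For the $\supseteq$ inclusion, I would start by fixing arbitrary $\widehat{\textbf{G}}_i\in\partial \textbf{F}_i(x)$ for $i=1,2,\ldots,m$, and aim to show that $\bigoplus_{i=1}^m\widehat{\textbf{G}}_i$ satisfies the defining inequality \eqref{esg} of a $gH$-subgradient of $\textbf{F}$ at $x$. Concretely, applying Definition \ref{dsg} to each $\textbf{F}_i$ gives
\[
(y-x)^T\odot\widehat{\textbf{G}}_i \preceq \textbf{F}_i(y)\ominus_{gH}\textbf{F}_i(x) \quad \text{for every } y\in\mathcal{X} \text{ and every } i.
\]
I would then aggregate these $m$ inequalities with $\oplus$, using that the ordering $\preceq$ is preserved by interval addition because lower and upper endpoints add componentwise. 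Distributivity of scalar-vector products over $\oplus$ (used already in Remark \ref{nlivf} and Theorem \ref{td1}) collapses the left-hand side to $(y-x)^T\odot\bigoplus_{i=1}^m\widehat{\textbf{G}}_i$. The right-hand side is a sum of $gH$-differences, which I would then relate to $\textbf{F}(y)\ominus_{gH}\textbf{F}(x)$ by expanding using $\underline{f}_i$ and $\overline{f}_i$ and invoking the definition of $\ominus_{gH}$.

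For the reverse inclusion $\subseteq$, I would argue by induction on $m$, with the nontrivial case being $m=2$. Take $\widehat{\textbf{G}}\in\partial\textbf{F}(x)$ and try to construct $\widehat{\textbf{G}}_1\in\partial\textbf{F}_1(x)$ and $\widehat{\textbf{G}}_2\in\partial\textbf{F}_2(x)$ with $\widehat{\textbf{G}}=\widehat{\textbf{G}}_1\oplus\widehat{\textbf{G}}_2$. The natural tool is the dual characterization in Theorem \ref{tddsg}, which expresses each subdifferential as the set of linear IVFs dominated by the corresponding directional $gH$-derivative. Combined with the additive property $\textbf{F}'(x)(h)=\textbf{F}_1'(x)(h)\oplus\textbf{F}_2'(x)(h)$ (which follows from the linearity of limits applied to Definition \ref{ddd} together with distributivity of $\oplus$), this should let me split the $gH$-subgradient inequality into two pieces, one for $\textbf{F}_1$ and one for $\textbf{F}_2$, via a separation argument.

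The main obstacle I anticipate is the $\subseteq$ direction, because it requires a Hahn--Banach/separation-style result adapted to the interval-valued setting with only the partial order $\preceq$. A secondary technical obstacle in the $\supseteq$ direction is the comparison between $\bigoplus_{i=1}^m\!\bigl(\textbf{F}_i(y)\ominus_{gH}\textbf{F}_i(x)\bigr)$ and $\textbf{F}(y)\ominus_{gH}\textbf{F}(x)$: because $\min$ and $\max$ do not commute with summation, these two interval expressions are generally not $\preceq$-comparable, so the aggregation step must be executed carefully endpoint by endpoint, using that $\widehat{\textbf{G}}_i$ already controls both the lower and upper endpoints of the corresponding $gH$-difference rather than just its $\ominus_{gH}$ form.
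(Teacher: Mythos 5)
Your proposal takes a genuinely different route from the paper's, and it contains a real gap in the $\subseteq$ direction. The paper does not prove the two inclusions at all: it writes $\textbf{F}(x)=\textbf{H}(Ax)$ with the stacking map $Ax=(x,x,\cdots,x)^T$ and the separable IVF $\textbf{H}(y_1,y_2,\cdots,y_m)=\bigoplus_{i=1}^m\textbf{F}_i(y_i)$, and then invokes the chain rule (Theorem \ref{tcr}), implicitly using that the $gH$-subdifferential of $\textbf{H}$ is the product $\partial\textbf{F}_1(y_1)\times\cdots\times\partial\textbf{F}_m(y_m)$ and that $A^T\odot(\widehat{\textbf{G}}_1,\cdots,\widehat{\textbf{G}}_m)=\bigoplus_{i=1}^m\widehat{\textbf{G}}_i$. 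Whatever one thinks of the completeness of that reduction (Theorem \ref{tcr} itself only establishes one inclusion before asserting equality), it is a two-line argument, whereas your plan commits you to proving both inclusions from first principles.

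The concrete gap is that your $\subseteq$ direction is a plan, not a proof. You correctly identify that it needs a Moreau--Rockafellar-type decomposition and hence a Hahn--Banach/separation theorem adapted to $(I(\mathbb{R})^n,\preceq)$; no such tool is available in the paper, Theorem \ref{tddsg} only gives $\textbf{F}'(\bar{x})(h)=\max\{h^T\odot\widehat{\textbf{G}}\mid\widehat{\textbf{G}}\in\partial\textbf{F}(\bar{x})\}$ under a nonemptiness hypothesis, and you never construct the splitting $\widehat{\textbf{G}}=\widehat{\textbf{G}}_1\oplus\widehat{\textbf{G}}_2$. So the hard half of the theorem remains unproven on your route. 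There is also an unresolved point in your $\supseteq$ direction, which you flag but do not close: summing the inequalities $(y-x)^T\odot\widehat{\textbf{G}}_i\preceq\textbf{F}_i(y)\ominus_{gH}\textbf{F}_i(x)$ bounds the upper endpoint of the left-hand side only by $\sum_i\max\{\underline{f}_i(y)-\underline{f}_i(x),\ \overline{f}_i(y)-\overline{f}_i(x)\}$, which dominates, rather than is dominated by, the upper endpoint $\max\{\sum_i(\underline{f}_i(y)-\underline{f}_i(x)),\ \sum_i(\overline{f}_i(y)-\overline{f}_i(x))\}$ of $\textbf{F}(y)\ominus_{gH}\textbf{F}(x)$. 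The lower endpoints aggregate correctly (a sum of minima is below the minimum of the sums), but for the upper endpoints the inequality runs the wrong way, so ``executing the aggregation endpoint by endpoint'' does not by itself yield $(y-x)^T\odot\bigoplus_{i=1}^m\widehat{\textbf{G}}_i\preceq\textbf{F}(y)\ominus_{gH}\textbf{F}(x)$; an additional argument is required even for the easy inclusion.
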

\begin{proof}
We write
\[
\textbf{F}(x)=\textbf{H}(Ax)~\text{for all}~ x\in\mathcal{X},
\]
where $A$ is a matrix, defined by $Ax=(x, x, \cdots, x)^T$ for all $x\in\mathcal{X}$
and $\textbf{H}:\mathbb{R}^{mn} \rightarrow I(\mathbb{R})$ is an IVF, defined by
\[
\textbf{H}(y)=\textbf{H}(y_1, y_2, \cdots, y_m)=\bigoplus_{i=1}^m\textbf{F}_i(y_i)~\text{for all}~ y\in \mathbb{R}^{mn}.
\]
Thus, by Theorem \ref{tcr}, we have
\[
\partial \textbf{F}(x)=\bigoplus_{i=1}^m\partial\textbf{F}_i(x)~\text{for all}~ x\in\mathcal{X}.
\]
\end{proof}


\section{Convex IOP and its Optimality Conditions}\label{siop}

\noindent In this section, we explore the relation of efficient solutions to the following IOP:
\begin{equation}\label{IOP}
\displaystyle \min_{x \in \mathcal{X}} \textbf{F}(x),
\end{equation}
where $\textbf{F}$ is a convex IVF on the nonempty convex subset $\mathcal{X}$ of $\mathbb{R}^n$, with the $gH$-subgradients of $\textbf{F}$. The IOP with convex IVFs is known as convex IOP.\\

\noindent The concept of an efficient solution of the IOP (\ref{IOP}) is defined below.
\begin{dfn}
(\emph{Efficient solution} \cite{Ghosh2019derivative}). A point $\bar{x} \in \mathcal{X}$ is called an efficient solution of the IOP (\ref{IOP}) if $\textbf{F}(x) \nprec \textbf{F} (\bar{x})$ for all $x (\neq \bar{x}) \in
\mathcal{X}$.
\end{dfn}
%
%
%
%
%
%
\begin{thm}\emph{(Optimality condition)}.\label{tsoc1}
Let $\mathcal{X}$ be a nonempty convex subset of $\mathbb{R}^n$ and $\textbf{F}: \mathcal{X} \rightarrow I(\mathbb{R})$ be a convex IVF. If $\widehat{\textbf{0}}\in \partial \textbf{F}(\bar{x})$ for some $\bar{x} \in \mathcal{X}$, where $\widehat{\textbf{0}} = (\textbf{0}, \textbf{0}, \cdots , \textbf{0})$, then $\bar{x}$ is an efficient solution of the IOP (\ref{IOP}).
\end{thm}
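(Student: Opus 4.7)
The plan is to unpack the definition of the zero element being a $gH$-subgradient and then show this forces a genuine pointwise dominance $\textbf{F}(\bar{x})\preceq \textbf{F}(x)$, which is incompatible with the strict dominance $\textbf{F}(x)\prec \textbf{F}(\bar{x})$ that efficiency forbids.

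First I would substitute $\widehat{\textbf{G}}=\widehat{\textbf{0}}$ into the defining inequality \eqref{esg} of Definition \ref{dsg}. Since $(x-\bar{x})^T\odot\widehat{\textbf{0}}=\bigoplus_{i=1}^n (x_i-\bar{x}_i)\odot\textbf{0}=\textbf{0}$, the hypothesis $\widehat{\textbf{0}}\in\partial\textbf{F}(\bar{x})$ reduces to
\[
\textbf{0}\preceq \textbf{F}(x)\ominus_{gH}\textbf{F}(\bar{x}) \text{ for all } x\in\mathcal{X}.
\]
Next I would use the explicit form of the $gH$-difference recalled in Section 2.1, namely
\[
\textbf{F}(x)\ominus_{gH}\textbf{F}(\bar{x})=\bigl[\min\{\underline{f}(x)-\underline{f}(\bar{x}),\overline{f}(x)-\overline{f}(\bar{x})\},\ \max\{\underline{f}(x)-\underline{f}(\bar{x}),\overline{f}(x)-\overline{f}(\bar{x})\}\bigr],
\]
and the definition of the order $\preceq$. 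The condition $\textbf{0}\preceq \textbf{F}(x)\ominus_{gH}\textbf{F}(\bar{x})$ requires the lower endpoint of the right-hand interval to be nonnegative, which, since it is the minimum of $\underline{f}(x)-\underline{f}(\bar{x})$ and $\overline{f}(x)-\overline{f}(\bar{x})$, forces simultaneously
\[
\underline{f}(\bar{x})\le \underline{f}(x) \quad\text{and}\quad \overline{f}(\bar{x})\le \overline{f}(x).
\]
In other words, $\textbf{F}(\bar{x})\preceq\textbf{F}(x)$ for every $x\in\mathcal{X}$.

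Finally, I would rule out strict dominance by a short case analysis against the definition of $\prec$. Suppose, for contradiction, that $\textbf{F}(x)\prec \textbf{F}(\bar{x})$ for some $x\in\mathcal{X}$. Then either (i) $\underline{f}(x)\le \underline{f}(\bar{x})$ and $\overline{f}(x)<\overline{f}(\bar{x})$, or (ii) $\underline{f}(x)<\underline{f}(\bar{x})$ and $\overline{f}(x)\le\overline{f}(\bar{x})$. Case (i) contradicts $\overline{f}(\bar{x})\le\overline{f}(x)$ and case (ii) contradicts $\underline{f}(\bar{x})\le\underline{f}(x)$. Hence $\textbf{F}(x)\nprec\textbf{F}(\bar{x})$ for every $x\in\mathcal{X}$, so $\bar{x}$ is an efficient solution of \eqref{IOP}.

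There is really no serious obstacle here; the whole argument hinges on correctly computing $(x-\bar{x})^T\odot\widehat{\textbf{0}}=\textbf{0}$ and translating the interval inequality $\textbf{0}\preceq \textbf{F}(x)\ominus_{gH}\textbf{F}(\bar{x})$ into componentwise inequalities of the boundary functions. The only place one must be careful is the final case analysis, since the definition of strict dominance $\prec$ in this paper is disjunctive (either endpoint inequality may be the strict one), so both cases must be dispatched explicitly to avoid missing a possibility.
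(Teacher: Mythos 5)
Your proposal is correct and follows the same route as the paper's own proof: substitute $\widehat{\textbf{0}}$ to get $\textbf{0}\preceq\textbf{F}(x)\ominus_{gH}\textbf{F}(\bar{x})$, deduce $\textbf{F}(\bar{x})\preceq\textbf{F}(x)$, and conclude $\textbf{F}(x)\nprec\textbf{F}(\bar{x})$. You merely make explicit the endpoint computation of the $gH$-difference and the two-case check against the disjunctive definition of $\prec$, which the paper leaves implicit.
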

\begin{proof}
Let $ \widehat{\textbf{0}}\in \partial \textbf{F}(\bar{x})$. Then for all $x\in \mathcal{X}$,
\begingroup\allowdisplaybreaks\begin{align*}
(x-\bar{x})^T\odot\widehat{\textbf{0}}\preceq\textbf{F}(x)\ominus_{gH} \textbf{F}(\bar{x})~\Longrightarrow~& \textbf{0}\preceq\textbf{F}(x)\ominus_{gH} \textbf{F}(\bar{x})\\
\Longrightarrow~& \textbf{F}(\bar{x})\preceq \textbf{F}(x)\\
\Longrightarrow~& \textbf{F}(x)\nprec \textbf{F}(\bar{x}).
\end{align*}\endgroup
Hence, $\bar{x}$ is an efficient solution of the IOP (\ref{IOP}).
\end{proof}
For instance, consider the following example.
\begin{example}\label{exsgm}
Let us consider the following IOP:
\begin{equation}\label{exsgmiop}
\min_{x\in \mathcal{X}=[-2, 6]} \textbf{F}(x)=
\begin{cases}
[-2, 5] \ominus_{gH} [-1, 0]\odot \lvert x-2 \rvert, & \text{if } 1 \leq x \leq 3 \\
[-2, 3] \oplus [1, 2]\odot \lvert x-2 \rvert,  & \text{otherwise}.
\end{cases}\end{equation}
Let $\bar{x}=2$, then we have
\[
\textbf{F}(x)\ominus_{gH} \textbf{F}(\bar{x})=
\begin{cases}
[0, \lvert x-2 \rvert], & \text{for } 1 \leq x \leq 3 \\
[2\lvert x-2 \rvert-2, \lvert x-2 \rvert], & \text{for } 0 \leq x \leq 1 \text{ and } 3 \leq x \leq 4 \\
[\lvert x-2 \rvert, 2\lvert x-2 \rvert-2],  & \text{otherwise}.
\end{cases}
\]
Thus,
\[
\textbf{0}=(x-\bar{x})\odot\textbf{0}\preceq\textbf{F}(x)\ominus_{gH} \textbf{F}(\bar{x})~\text{for all}~ x\in \mathcal{X}.
\]
Therefore, $\textbf{0}\in\partial\textbf{F}(\bar{x})$.\\
\begin{figure}[H]
\begin{center}
\includegraphics[scale=0.6]{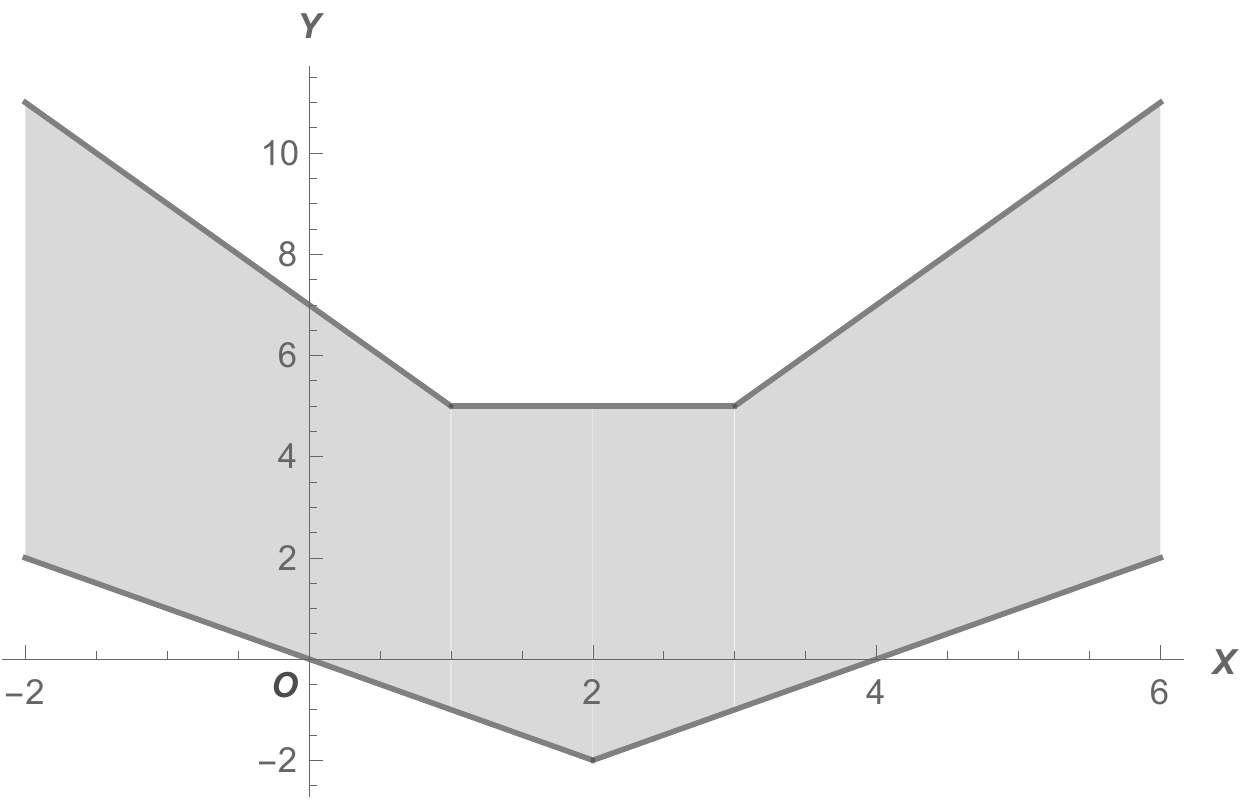}
    \caption{IVF $\textbf{F}$ of the IOP (\ref{exsgmiop}) is illustrated by gray shaded region.}\label{foptsgm}
\end{center}
\end{figure}
\end{example}
The graph of the IVF $\textbf{F}$ is depicted by the gray shaded region in Figure \ref{foptsgm}. From Figure \ref{foptsgm}, it is to be observed that there does not exist any $x(\neq \bar{x}) \in \mathcal{X}$ such that $\textbf{F}(x)\prec\textbf{F}(\bar{x})=[-2, 5]$. Hence, $\bar{x}=2$ is the efficient solution of the IOP (\ref{exsgmiop}).
The following example shows that the converse of Theorem \ref{tsoc1} is not true.
\begin{example}\label{nsg2}
Consider the following IOP:
\begin{equation}\label{ex1iop}
\min_{x\in \mathcal{X}} \textbf{F}(x)=[1, 2] \odot x^2 \ominus [0, 2] \odot (x+1) \oplus [4, 6],
\end{equation}
where $\mathcal{X}=[-1, 2]$.\\

\noindent Since $\underline{f}(x) = x^2-2x+2$ and $\overline{f}(x) =2x^2+6$ are convex on $\mathcal{X}$, the IVF $\textbf{F}$ is convex on $\mathcal{X}$ by Lemma \ref{lc1}. Further, as $\underline{f}$ and $\overline{f}$ are differentiable in $\mathcal{X}$, the IVF $\textbf{F}$ is $gH$-differentiable in $\mathcal{X}$ by Remark \ref{rd1}. Hence,
\[
\partial \textbf{F}(x)=\left\{\nabla \textbf{F}(x)\right\}=\left\{[2, 4]\odot x\ominus [0, 2]\right\}~\text{for all}~ x\in \mathcal{X}.
\]
The graph of the IVF $\textbf{F}$ is illustrated by the gray shaded region in Figure \ref{fsgm1}. From Figure \ref{fsgm1}, It is clear that for any $\bar{x} \in [0, 1]$, there does not exist any $x(\neq \bar{x}) \in \mathcal{X}$ such that $\textbf{F}(x)\prec\textbf{F}(\bar{x})$. Therefore, each $\bar{x} \in [0, 1]$ is an efficient solution of the IOP (\ref{ex1iop}). The region of the efficient solutions of the IOP (\ref{ex1iop}) is marked by bold black line on the $x$-axis in Figure \ref{fsgm1}. However, for each $x \in [0, 1]$,
\[
\nabla \textbf{F}(x)=[2x-2, 4x]\neq \textbf{0}
\]
and hence, $\textbf{0}\not\in\partial\textbf{F}(x)$.

\begin{figure}[H]
\begin{center}
\includegraphics[scale=0.6]{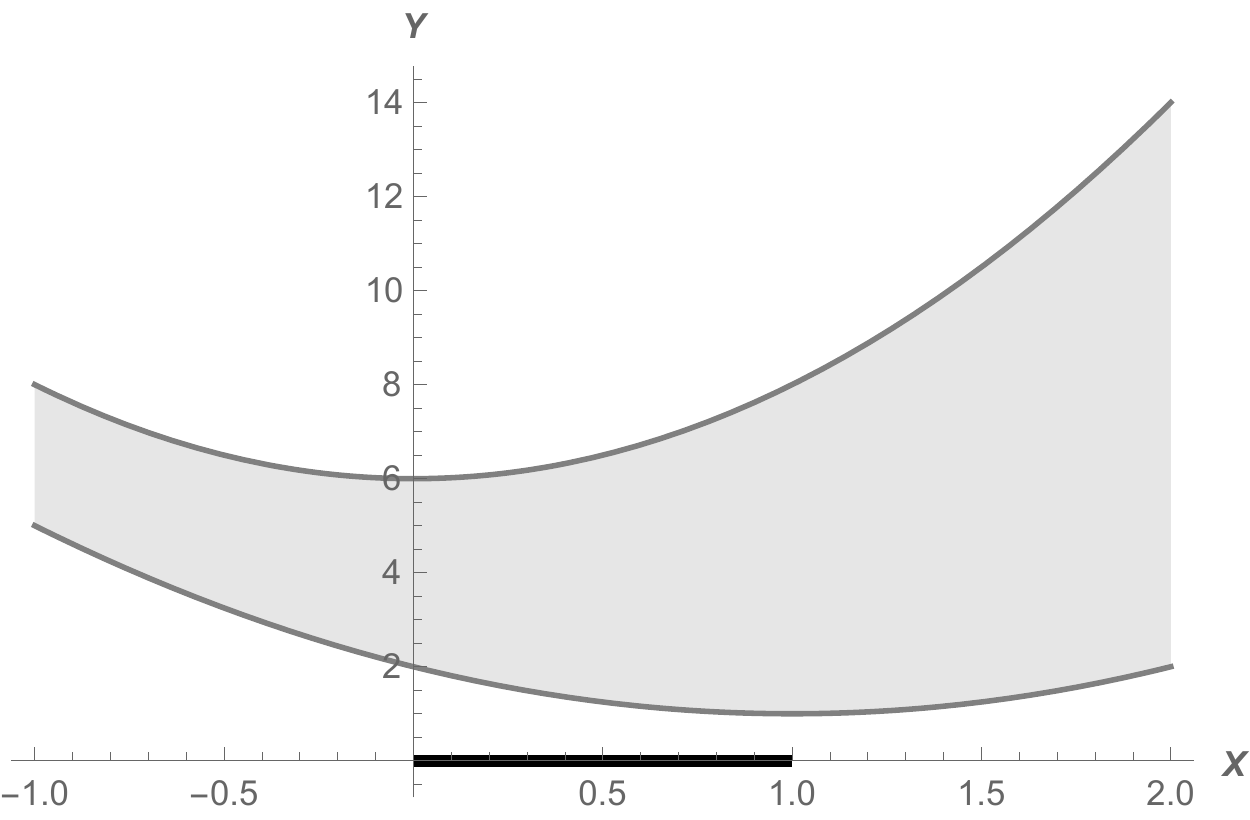}
    \caption{IVF $\textbf{F}$ and efficient solutions of the IOP (\ref{ex1iop}) are depicted by gray shaded region and bold black line on $x$-axis, respectively.}\label{fsgm1}
\end{center}
\end{figure}
\end{example}
\begin{thm}\emph{(Optimality condition)}.\label{tsoc2}
Let $\mathcal{X}$ be a nonempty convex subset of $\mathbb{R}^n$ and $\textbf{F}: \mathcal{X} \rightarrow I(\mathbb{R})$ be a convex IVF. If there exists a $\widehat{\textbf{G}}\in \partial \textbf{F}(\bar{x})$ for some $\bar{x} \in \mathcal{X}$, such that
\begin{equation}\label{esgo}
(x-\bar{x})^T\odot\widehat{\textbf{G}}\nprec \textbf{0}~\text{for all}~x\in\mathcal{X},
\end{equation}
then $\bar{x}$ is an efficient solution of the IOP (\ref{IOP}).
\end{thm}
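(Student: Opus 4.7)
The plan is to argue by contradiction: assume $\bar{x}$ is not efficient and then use the $gH$-subgradient inequality to produce a vector in $\mathcal{X}$ violating the hypothesis $(x-\bar{x})^T\odot\widehat{\textbf{G}}\nprec \textbf{0}$.

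So, suppose toward contradiction that $\bar{x}$ fails to be an efficient solution. Then there exists $x^{\ast}\in \mathcal{X}$ with $x^{\ast}\neq \bar{x}$ such that $\textbf{F}(x^{\ast})\prec \textbf{F}(\bar{x})$. The first small step is to translate this strict dominance on the values of $\textbf{F}$ into a strict dominance of their $gH$-difference by $\textbf{0}$. Using the definition of $\prec$ on intervals, either $\underline{f}(x^{\ast})\leq \underline{f}(\bar{x})$ with $\overline{f}(x^{\ast})<\overline{f}(\bar{x})$, or $\underline{f}(x^{\ast})<\underline{f}(\bar{x})$ with $\overline{f}(x^{\ast})\leq\overline{f}(\bar{x})$. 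Writing out
\[
\textbf{F}(x^{\ast})\ominus_{gH}\textbf{F}(\bar{x})=\bigl[\min\{\underline{f}(x^{\ast})-\underline{f}(\bar{x}),\,\overline{f}(x^{\ast})-\overline{f}(\bar{x})\},\,\max\{\underline{f}(x^{\ast})-\underline{f}(\bar{x}),\,\overline{f}(x^{\ast})-\overline{f}(\bar{x})\}\bigr],
\]
one checks in either case that the lower endpoint is strictly negative and the upper endpoint is non-positive, so $\textbf{F}(x^{\ast})\ominus_{gH}\textbf{F}(\bar{x})\prec \textbf{0}$.

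Next I would combine this with the defining property of $\widehat{\textbf{G}}\in\partial\textbf{F}(\bar{x})$, namely
\[
(x^{\ast}-\bar{x})^T\odot\widehat{\textbf{G}}\preceq \textbf{F}(x^{\ast})\ominus_{gH}\textbf{F}(\bar{x}).
\]
A short intermediate lemma (easy from the definitions of $\preceq$ and $\prec$) shows that $\textbf{A}\preceq \textbf{B}$ and $\textbf{B}\prec \textbf{C}$ imply $\textbf{A}\prec \textbf{C}$: the two coordinatewise inequalities of $\preceq$ chain with the mixed strict/weak inequalities of $\prec$. Applying this with $\textbf{A}=(x^{\ast}-\bar{x})^T\odot\widehat{\textbf{G}}$, $\textbf{B}=\textbf{F}(x^{\ast})\ominus_{gH}\textbf{F}(\bar{x})$, and $\textbf{C}=\textbf{0}$ yields
\[
(x^{\ast}-\bar{x})^T\odot\widehat{\textbf{G}}\prec \textbf{0},
\]
which directly contradicts the hypothesis \eqref{esgo} at $x=x^{\ast}\in\mathcal{X}$. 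Hence no such $x^{\ast}$ exists and $\bar{x}$ is efficient.

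I do not expect a serious obstacle. The only point requiring care is the interval-analytic bookkeeping: correctly expanding $\ominus_{gH}$ via the $\min/\max$ formula and verifying the transitivity $\preceq$ composed with $\prec$ yields $\prec$. Both are straightforward from the definitions given in Section~2, but they are the places where an overly casual argument could slip.
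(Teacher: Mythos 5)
Your proof is correct and is essentially the paper's own argument written in contrapositive form: the paper directly chains $(x-\bar{x})^T\odot\widehat{\textbf{G}}\preceq \textbf{F}(x)\ominus_{gH}\textbf{F}(\bar{x})$ with the hypothesis to get $\textbf{F}(x)\ominus_{gH}\textbf{F}(\bar{x})\nprec\textbf{0}$ and hence $\textbf{F}(x)\nprec\textbf{F}(\bar{x})$, which are exactly the two implications you establish by contradiction. Your version is somewhat more careful, since it actually verifies the $\min/\max$ expansion of the $gH$-difference and the transitivity of $\preceq$ composed with $\prec$, both of which the paper leaves implicit.
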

\begin{proof}
Let there exists a $\widehat{\textbf{G}} \in \partial \textbf{F}(\bar{x})$ for which the relation (\ref{esgo}) is true. Then, by Definition \ref{dsg} of $gH$-subgradient and the relation (\ref{esgo}), we obtain
\begin{align*}
&\textbf{F}(x)\ominus_{gH} \textbf{F}(\bar{x})\nprec \textbf{0}\\
\Longrightarrow~& \textbf{F}(x)\nprec \textbf{F}(\bar{x})
\end{align*}
for all $x\in \mathcal{X}$. Hence, $\bar{x}$ is an efficient solution of the IOP (\ref{IOP}).
\end{proof}
\begin{rmrk}
The converse of Theorem \ref{tsoc2} is not true. For example, consider the IOP (\ref{ex1iop}) of Example \ref{nsg2}. We have seen that each point $\bar{x}\in[0, 1]$ is an efficient solution of the IOP (\ref{ex1iop}). However, at $\bar{x}=0$,
\[
(x-\bar{x})\odot\widehat{\textbf{G}}=(x-\bar{x})\odot\nabla\textbf{F}(\bar{x})=[-2, 0]\odot x\prec \textbf{0}~\text{for all}~x\in (0, 2]\subset\mathcal{X}.
\]
\end{rmrk}
%
%
%

\section{Conclusion and Future Directions} \label{scf}

\noindent In this article, the concepts of $gH$-subgradients and $gH$-subdifferentials of convex IVFs with their several important characteristics have been provided. It has been shown that the $gH$-subdifferential of a convex IVF is closed (Theorem \ref{tbd}) and convex (Theorem \ref{tcd}); the $gH$-subdifferential of a $gH$-differentiable convex IVF contains only $gH$-gradient of that IVF (Theorem \ref{tgd}). It has been observed that on a real linear subspace if the $gH$-subgradients of a convex IVF at a point exists, then the $gH$-directional derivative of the IVF at that point in each direction is maximum of all the products of $gH$-subgradients and the direction (Theorem \ref{tddsg}). Also, it has been observed that a convex IVF is $gH$-Lipschitz continuous if it has $gH$-subgradient at each point in its domain (Theorem \ref{tlc}). The chain rule of a convex IVF (Theorem \ref{tcr}) and the $gH$-subgradient of the sum of finite numbers of convex IVFs (Theorem \ref{tss}) have been depicted. Furthermore, the relations between efficient solutions of an IOP with $gH$-subgradient of its objective function have been illustrated (Theorem \ref{tsoc1} and Theorem \ref{tsoc2}).\\

 Although in this article, we have studied various interesting properties of $gH$-subgradients and $gH$-subdifferentials of convex IVFs but could not make any conclusion about nonemptiness of $gH$-subdifferentials. In future, we shall try to make a conclusion about nonemptiness of $gH$-subdifferentials. Also, in connection with the proposed research, future research can evolve in several directions as follows.

\begin{itemize}
    \item The concept of subdifferential of the dual problem of a constrained convex IOP can be illustrated.
    \item A $gH$-subgradient technique to obtain the whole solution set of a nonsmooth convex IOP can be derived.
    \item The derived results can be applied to solve \emph{lasso problem} with interval-valued data.
    \item The notions of quasidifferentiability for IVFs without the help of its parametric representation can be illustrated.
    \item As IVFs are the special case of FVFs and IOPs are the special case of fuzzy optimization problems, similar results can be extended for FVFs and nonsmooth fuzzy optimization problems.

\end{itemize}


\appendix

\section{Proof of norm on $I(\mathbb{R})^n$} \label{apirnnorm}
\begin{proof}
\[
{\lVert \widehat{\textbf{A}} \rVert}_{I(\mathbb{R})^n} = \sqrt{\sum_{i=1}^n {\lVert \textbf{A}_i \rVert}_{I(\mathbb{R})}^2},
\]

\begin{enumerate}[(i)]
\item For any element $\widehat{\textbf{A}}=\left(\textbf{A}_1, \textbf{A}_2, \cdots, \textbf{A}_n\right)\in I(\mathbb{R})^n$, we have
\[
{\lVert \widehat{\textbf{A}} \rVert}_{I(\mathbb{R})^n}= \sqrt{\sum_{i=1}^n {\lVert \textbf{A}_i \rVert}_{I(\mathbb{R})}^2}\geq 0,~\text{since}~{\lVert \textbf{A}_i \rVert}_{I(\mathbb{R})}\geq 0~\text{for all}~i
\]
and
\begingroup\allowdisplaybreaks\begin{align*}
{\lVert \widehat{\textbf{A}} \rVert}_{I(\mathbb{R})^n}=0&\iff {\lVert \textbf{A}_i \rVert}_{I(\mathbb{R})}= 0~\text{for all}~i\\
&\iff \textbf{A}_i = \textbf{0}~\text{for all}~i\\
&\iff \widehat{\textbf{A}}=\widehat{\textbf{0}}=\left(\textbf{0}, \textbf{0}, \cdots, \textbf{0}\right).
\end{align*}\endgroup
\item For any $\lambda\in \mathbb{R}$ and an element $\widehat{\textbf{A}}\in I(\mathbb{R})^n$, we obtain
\begingroup\allowdisplaybreaks\begin{align*}
{\lVert \lambda\odot\widehat{\textbf{A}} \rVert}_{I(\mathbb{R})^n}= \sqrt{\sum_{i=1}^n {\lVert \lambda\odot\textbf{A}_i \rVert}_{I(\mathbb{R})}^2}={\lvert\lambda\rvert}\sqrt{\sum_{i=1}^n {\lVert \textbf{A}_i \rVert}_{I(\mathbb{R})}^2}={\lvert\lambda\rvert}{\lVert\widehat{\textbf{A}} \rVert}_{I(\mathbb{R})^n}.\\
\end{align*}\endgroup
\item For any two elements $\widehat{\textbf{A}}, \widehat{\textbf{B}}\in I(\mathbb{R})^n$, we have
    \[
    {\lVert\widehat{\textbf{A}}\oplus\widehat{\textbf{B}} \rVert}_{I(\mathbb{R})^n}=\sqrt{\sum_{i=1}^n {\lVert \textbf{A}_i \oplus\textbf{B}_i \rVert}_{I(\mathbb{R})}^2}.
    \]
Without loss of generality, due to Definition \ref{irnorm}, let
\[
{\lVert \textbf{A}_i \oplus\textbf{B}_i \rVert}_{I(\mathbb{R})}=\begin{cases}
{\lvert \underline{a}_i +\underline{b}_i \rvert} & \text{for } i=1, 2, \cdots, p(\leq n)\\
{\lvert \overline{a}_i +\overline{b}_i \rvert} & \text{for } i=p+1, p+2, \cdots, n.\\
\end{cases}
\]
Therefore,
\begingroup\allowdisplaybreaks\begin{align*}
&\sqrt{\sum_{i=1}^n {\lVert \textbf{A}_i \oplus\textbf{B}_i \rVert}_{I(\mathbb{R})}^2}\\
=~&\sqrt{\sum_{j=1}^p {\lvert \underline{a}_j +\underline{b}_j \rvert}^2+\sum_{k=p+1}^n {\lvert \overline{a}_k +\overline{b}_k \rvert}^2}\\
\leq~ &\sqrt{\sum_{j=1}^p {\lvert \underline{a}_j \rvert}^2+\sum_{k=p+1}^n {\lvert \overline{a}_k \rvert}^2}+\sqrt{\sum_{j=1}^p {\lvert \underline{b}_j \rvert}^2+\sum_{k=p+1}^n {\lvert \overline{b}_k \rvert}^2}~\text{by Minkowski inequality}\\
\leq~ &\sqrt{\sum_{i=1}^n {\lVert \textbf{A}_i \rVert}^2}+\sqrt{\sum_{i=1}^n {\lVert \textbf{B}_i \rVert}^2}~\text{due to Definition \ref{irnorm}}.
\end{align*}\endgroup
Thus,
\[ {\lVert\widehat{\textbf{A}}\oplus\widehat{\textbf{B}} \rVert}_{I(\mathbb{R})^n}\leq \sqrt{\sum_{i=1}^n {\lVert \textbf{A}_i \rVert}^2}+\sqrt{\sum_{i=1}^n {\lVert \textbf{B}_i \rVert}^2}
\]
\end{enumerate}
Hence, the function ${\lVert \cdot \rVert}_{I(\mathbb{R})^n}$ is a norm on $I(\mathbb{R})^n$.
\end{proof}
\section{Proof of Lemma \ref{lc2}} \label{aplc2}
\begin{proof}
Let $\textbf{F}$ be $gH$-continuous at a point $\bar{x}$ of the set $\mathcal{X}$. Thus, for any $d\in\mathbb{R}^n$ such that $\bar{x}+d\in\mathcal{X}$,
\[
\lim_{\lVert d \rVert\to 0}\left(\textbf{F}(\bar{x}+d)\ominus_{gH}\textbf{F}(\bar{x})\right)=\textbf{0},
\]
which implies
\[
\lim_{\lVert d \rVert\to 0}\left([\underline{f}(\bar{x}+d),\ \overline{f}(\bar{x}+d)]\ominus_{gH}[\underline{f}(\bar{x}),\ \overline{f}(\bar{x})]\right)=[0,\ 0].
\]
Hence, by the definition of $gH$-difference we have
\[
\lim_{\lVert d \rVert\to 0}(\underline{f}(\bar{x}+d)-\underline{f}(\bar{x}))= 0\ \mbox{and}\ \lim_{\lVert d \rVert\to 0}(\overline{f}(\bar{x}+d)-\overline{f}(\bar{x}))= 0,
\]
i.e., $\underline{f}$ and $\overline{f}$ are continuous at $\bar{x}\in\mathcal{X}$.\\

\noindent Conversely, let the functions $\underline{f}$ and $\overline{f}$ be continuous at $\bar{x}\in\mathcal{X}$. If possible, let $\textbf{F}$ be not $gH$-continuous at $\bar{x}$. Then, as $\lVert d \rVert\to 0,\ (\textbf{F}(\bar{x}+d)\ominus_{gH}\textbf{F}(\bar{x}))\not\to\textbf{0}$. Therefore, as $\lVert d \rVert\to 0$ at least one of the functions $\left(\underline{f}(\bar{x}+d)-\underline{f}(\bar{x})\right)$ and $\left(\overline{f}(\bar{x}+d)-\overline{f}(\bar{x})\right)$ does not tend to $0$. Thus, at least one of the functions $\underline{f}$ and $\overline{f}$ is not continuous at $\bar{x}$. This contradicts the assumption that the functions $\underline{f}$ and $\overline{f}$ both are continuous at $\bar{x}$. Hence, $\textbf{F}$ is $gH$-continuous at $\bar{x}$.
\end{proof}
\section{Proof of Theorem \ref{tc}} \label{aptc}
\begin{proof}
Let the IVF $\textbf{F}$ be convex on $\mathcal{X}$. Due to Lemma \ref{lc1}, $\underline{f}$ and $\overline{f}$ are convex on $\mathcal{X}$. Therefore, by the property of real-valued functions, $\underline{f}$ and $\overline{f}$ are continuous on $\mathcal{X}$. Hence, according to Lemma \ref{lc2}, $\textbf{F}$ is $gH$-continuous on $\mathcal{X}$.
\end{proof}
\section{Proof of Theorem \ref{td2}} \label{aptd2}
\begin{proof}
Let the function $\textbf{F}$ be convex on $\mathcal{X}$. Then, for any $x,~y \in \mathcal{X}$ and $\lambda\in (0,1]$, we get
\[
\textbf{F}(x+\lambda (y-x))=\textbf{F}(\lambda y+\lambda'x)~\preceq~ \lambda\odot\textbf{F}(y)\oplus\lambda'\odot\textbf{F}(x),~\text{where}~\lambda'=1-\lambda.
\]
Hence,
\begingroup\allowdisplaybreaks\begin{align*}
\textbf{F}(x+\lambda (y-x))\ominus_{gH}\textbf{F}(x)~\preceq~&(\lambda\odot\textbf{F}(y)\oplus\lambda'\odot\textbf{F}(x))\ominus_{gH}\textbf{F}(x)\\
=~& \big[\lambda \underline{f}(y)+\lambda'\underline{f}(x), \lambda \overline{f}(y)+\lambda'\overline{f}(x)]\ominus_{gH} [\underline{f}(x), \overline{f}(x)\big]\\
	=~& \big[\min \{\lambda \underline{f}(y)+\lambda'\underline{f}(x)-\underline{f}(x), \lambda \overline{f}(y)+\lambda'\overline{f}(x)-\overline{f}(x)  \},\\
	&~~\max \{\lambda \underline{f}(y)+\lambda'\underline{f}(x)-\underline{f}(x), \lambda \overline{f}(y)+\lambda'\overline{f}(x)-\overline{f}(x)  \}\big]\\
	=~&\big[\min\{\lambda\underline{f}(y)-\lambda \underline{f}(x), \lambda\overline{f}(y)-\lambda \overline{f}(x) \}, \\
	&~~\max\{\lambda\underline{f}(y)-\lambda \underline{f}(x), \lambda\overline{f}(y)-\lambda \overline{f}(x) \} \big] \\
	=~&\lambda \odot \big[\min \{\underline{f}(y)-\underline{f}(x), \overline{f}(y)-\overline{f}(x)\},\\
	&~~~~~~~~~\max \{\underline{f}(y)-\underline{f}(x), \overline{f}(y)-\overline{f}(x)\}  \big],~\text{since}~\lambda~>~0\\
	=~&\lambda\odot(\textbf{F}(y)\ominus_{gH}\textbf{F}(x)),
\end{align*}\endgroup
which implies
\[
\frac{1}{\lambda}\odot(\textbf{F}(x+\lambda (y-x))\ominus_{gH}\textbf{F}(x))~\preceq~ \textbf{F}(y)\ominus_{gH}\textbf{F}(x).
\]
Since $\textbf{F}$ is $gH$-differentiable at $x\in\mathcal{X}$, taking $\lambda\to 0+$, by Theorem \ref{td}, we have
\[
(y-x)^T \odot \nabla\textbf{F}(x)~\preceq~ \textbf{F}(y)\ominus_{gH}\textbf{F}(x)~\text{for all}~x,~y\in \mathcal{X}.
\]
\end{proof}
\section{Proof of norm on $\widehat{\textbf{Y}}$} \label{aplfnorm}
\begin{proof}
\begin{enumerate}[(i)]
\item Since $\Vert \textbf{L}(x)\rVert_{I(\mathbb{R})}\geq 0$ and $\lVert x \rVert>0$,

\[
\lVert \textbf{L}\rVert_{\widehat{\mathcal{Y}}} = \sup\limits_{x\neq 0}\frac{\lVert \textbf{L}(x)\rVert_{I(\mathbb{R})}}{\lVert x \rVert}\geq 0~\text{for all}~x\in\mathcal{Y},
\]
and
\begingroup\allowdisplaybreaks\begin{align*}
\lVert \textbf{L}\rVert_{\widehat{\mathcal{Y}}}=0\iff &\sup\limits_{x\neq 0}\frac{\lVert \textbf{L}(x)\rVert_{I(\mathbb{R})}}{\lVert x \rVert}=0\\
\iff & \lVert \textbf{L}(x)\rVert_{I(\mathbb{R})}=0~\text{for all}~x\in\mathcal{Y}\\
\iff & \textbf{L}(x)=\textbf{0}~\text{for all}~x\in\mathcal{Y}\\
\iff & \textbf{L}~\text{is the interval-valued zero mapping;}
\end{align*}
\endgroup
by an interval-valued zero mapping we mean an IVF which maps each element of its domain to $\textbf{0}=[0, 0].$

\item Let $\textbf{L}\in \widehat{\mathcal{Y}}$ and $\lambda \in \mathbb{R}$. Then,
\[
\lVert (\lambda \odot \textbf{L})\rVert_{\widehat{\mathcal{Y}}}=\sup\limits_{x\neq 0}\frac{\lVert (\lambda \textbf{L})(x)\rVert_{I(\mathbb{R})}}{\lVert x \rVert}=\lvert \lambda \rvert \lVert \textbf{L}\rVert_{\widehat{\mathcal{Y}}}.
\]
\item Let $\textbf{L}_1, \textbf{L}_2 \in \widehat{\mathcal{Y}}$. Then,
\begingroup\allowdisplaybreaks\begin{align*}
\lVert \textbf{L}_1  \oplus \textbf{L}_2 \rVert_{\widehat{\mathcal{Y}}} ~=~& \sup\limits_{x\neq 0}\frac{\lVert  \textbf{L}_1 (x) \oplus \textbf{L}_2(x)   \rVert_{I(\mathbb{R})}}{\lVert x \rVert}\\
\leq~& \sup\limits_{x\neq 0}\frac{\Vert \textbf{L}_1(x)\rVert_{I(\mathbb{R})}+ \Vert \textbf{L}_2(x)\rVert_{I(\mathbb{R})}}{\lVert x \rVert}\\
=~&\lVert \textbf{L}_1 \rVert_{\widehat{\mathcal{Y}}}+\lVert \textbf{L}_2 \rVert_{\widehat{\mathcal{Y}}}.
\end{align*}\endgroup
\end{enumerate}
\end{proof}
\section{Proof of Lemma \ref{lnf}} \label{aplnf}
\begin{proof}
For all $x\in\mathcal{Y}$, we have
\begin{equation}\label{en1}
\textbf{L}(x)\preceq \textbf{C}\odot \lVert x \rVert,
\end{equation}
i.e.,
\[
\left[\underline{l}(x), \overline{l}(x)\right]\preceq\left[\underline{c}\lVert x \rVert, \overline{c}\lVert x \rVert\right].
\]
Therefore,
\begin{equation}\label{en2}
\underline{l}(x)\leq\underline{c}\lVert x \rVert~\text{and}~\overline{l}(x)\leq \overline{c}\lVert x \rVert.\\
\end{equation}
Replacing $x$ by $-x$ in the relation (\ref{en1}), we get
\begin{align*}
&\textbf{L}(-x)\preceq \textbf{C}\odot \lVert x \rVert\\
\Longrightarrow~&(-1)\odot\textbf{L}(x)\preceq \textbf{C}\odot \lVert x \rVert\\
\Longrightarrow~&(-1)\odot\textbf{C}\odot \lVert x \rVert\preceq\textbf{L}(x)\\
\Longrightarrow~&\left[-\overline{c}\lVert x \rVert, -\underline{c}\lVert x \rVert\right]\preceq\left[\underline{l}(x), \overline{l}(x)\right],
\end{align*}
which implies
\begin{equation}\label{en3}
\underline{l}(x)\geq-\overline{c}\lVert x \rVert~\text{and}~\overline{l}(x)\geq -\underline{c}\lVert x \rVert.\\
\end{equation}
By the inequalities (\ref{en2}) and (\ref{en3}) we obtain
\begin{align*}
&-\overline{c}\lVert x \rVert\leq\underline{l}(x)\leq\underline{c}\lVert x \rVert~\text{and}~-\underline{c}\lVert x \rVert\leq\overline{l}(x)\leq \overline{c}\lVert x \rVert\\
\Longrightarrow~&\lvert \underline{l}(x) \rvert\leq\max\left\{\lvert \underline{c} \rvert\lVert x \rVert, \lvert \overline{c} \rvert \lVert x \rVert\right\}~\text{and}~\lvert \overline{l}(x) \rvert\leq\max\left\{\lvert \underline{c} \rvert\lVert x \rVert, \lvert \overline{c} \rvert \lVert x \rVert\right\}\\
\Longrightarrow~&\max\left\{\lvert \underline{l}(x) \rvert, \lvert \overline{l}(x) \rvert\right\}\leq\max\left\{\lvert \underline{c} \rvert\lVert x \rVert, \lvert \overline{c} \rvert \lVert x \rVert\right\}\\
\Longrightarrow~&\lVert\textbf{L}(x)\rVert_{I(\mathbb{R})}\leq \lVert\textbf{C}\rVert_{I(\mathbb{R})} \lVert x \rVert
\end{align*}
for all $x\in\mathcal{Y}$.
\end{proof}
\section{Proof of Lemma \ref{lsgbd1}} \label{aplsgbd1}
\begin{proof}
According to Remark \ref{ria1}, without loss of generality, let the first $p$ components of $d$ be nonnegative and the rest $n-p$ components be negative. Therefore, $d^T\odot\widehat{\textbf{A}}$ can be written as
\begingroup\allowdisplaybreaks\begin{align*}
d^T\odot\widehat{\textbf{A}}~=~&\bigoplus_{i=1}^n d_i\odot \textbf{A}_i\\
~=~&\bigoplus_{i=1}^p d_i\odot \textbf{A}_i\oplus\bigoplus_{j=p+1}^{n-p} d_j\odot \textbf{A}_j\\
~=~ & \bigoplus_{i=1}^p \left[\underline{a_{i}}d_i, \overline{a_{i}}d_i\right]\oplus\bigoplus_{j=p+1}^n \left[\overline{a_{j}}d_j, \underline{a_{j}}d_j\right]\\
~=~ & \left[\sum_{i=1}^p\underline{a_{i}}d_i+\sum_{j=p+1}^n\overline{a_{j}}d_j,~ \sum_{i=1}^p\overline{a_{i}}d_i+\sum_{j=p+1}^n\underline{a_{j}}d_j\right].
\end{align*}\endgroup
Therefore,
\begingroup\allowdisplaybreaks\begin{align*}
d^T\odot\widehat{\textbf{A}}\preceq [c, c]~\Longrightarrow~ & \sum_{i=1}^p\underline{a_{i}}d_i+\sum_{j=p+1}^n\overline{a_{j}}d_j \leq \sum_{i=1}^p\overline{a_{i}}d_i+\sum_{j=p+1}^n\underline{a_{j}}d_j\leq c\\
\Longrightarrow~ & w\sum_{i=1}^n\underline{a_{i}}d_i+w'\sum_{i=1}^n\overline{a_{i}}d_i\leq 2c\\
\Longrightarrow~ & \sum_{i=1}^n(w\underline{a_{i}}+w'\overline{a_{i}})d_i\leq 2c\\
\Longrightarrow~ & d^T\mathcal{W}\left(\widehat{\textbf{A}}\right)\leq 2c.
\end{align*}\endgroup
\end{proof}
\section{Proof of Lemma \ref{lsgbd2}} \label{aplsgbd2}
\begin{proof} Let
\[
\lVert\mathcal{W}\left(\widehat{\textbf{A}}\right)\rVert = \sqrt{\left(w\underline{a}_1+w'\overline{a}_1\right)^2+ \left(w\underline{a}_2+w'\overline{a}_2\right)^2+ \cdots + \left(w\underline{a}_n+w'\overline{a}_n\right)^2}.
\]
be finite. Therefore, all $\underline{a}_i$'s and $\overline{a}_i$'s are finite. Hence,
\[
{\lVert \widehat{\textbf{A}} \rVert}_{I(\mathbb{R})^n} = \sqrt{\sum_{i=1}^n {\lVert \textbf{A}_i \rVert}_{I(\mathbb{R})}^2}
= \sqrt{\sum_{i=1}^n \max \left\{\lvert\underline{a_i}\rvert, \lvert\overline{a_i}\rvert\right\}^2}
\]
is finite.
\end{proof}
\section{Proof of Lemma \ref{llc1}} \label{apllc1}
\begin{proof}
Let $x^T\odot\widehat{\textbf{A}}=\textbf{B}$. According to Definition \ref{irnorm}, we have
\[
x^T\odot\widehat{\textbf{A}}=\textbf{B}\preceq \left[\lVert \textbf{B} \rVert_{I(\mathbb{R})}, \lVert \textbf{B} \rVert_{I(\mathbb{R})}\right],
\]
which implies
\[
x^T\odot\widehat{\textbf{A}}\preceq \lVert x \rVert\odot\left[\lVert \widehat{\textbf{A}} \rVert _{I(\mathbb{R})^n}, \lVert \widehat{\textbf{A}} \rVert _{I(\mathbb{R})^n}\right]
\]
because
\begin{align*}
\lVert \textbf{B} \rVert_{I(\mathbb{R})}=~& \lVert x_1\odot \textbf{A}_1\oplus x_2\odot \textbf{A}_2\oplus\cdots\oplus x_n\odot \textbf{A}_n  \rVert_{I(\mathbb{R})} \\
\le~&\lVert x_1\odot \textbf{A}_1 \rVert _{I(\mathbb{R})}+\lVert x_2\odot \textbf{A}_2 \rVert _{I(\mathbb{R})}+\cdots+\lVert x_n\odot \textbf{A}_n \rVert_{I(\mathbb{R})} \\
=~&\lvert x_1 \rvert \lVert \textbf{A}_1\rVert_{I(\mathbb{R})}+\lvert x_2 \rvert \lVert \textbf{A}_2\rVert_{I(\mathbb{R})}+\cdots+\lvert x_n \rvert \lVert \textbf{A}_n\rVert_{I(\mathbb{R})}\\
\leq~& \lVert x \rVert\left(\lVert \textbf{A}_1\rVert_{I(\mathbb{R})}+\lVert \textbf{A}_2\rVert_{I(\mathbb{R})}+\cdots+\lVert \textbf{A}_n\rVert_{I(\mathbb{R})}\right)\\
=~&\lVert x \rVert\lVert \widehat{\textbf{A}} \rVert _{I(\mathbb{R})^n}.
\end{align*}
\end{proof}
\section{Proof of Lemma \ref{llc2}} \label{apllc2}
\begin{proof}
Since $\textbf{F}(x)\ominus_{gH}\textbf{F}(y)\preceq \textbf{C}\odot\lVert x-y \rVert$, for all $x,~y\in \mathcal{X}$, we have
\begin{equation}\label{eggg1}
\underline{f}(x)-\underline{f}(y)\leq c\lVert x-y \rVert~  \text{and}~	 \overline{f}(x)-\overline{f}(y)\leq c\lVert x-y \rVert.
\end{equation}
Interchanging $x$ and $y$ in the inequalities (\ref{eggg1}), we obtain
\begin{equation}\label{eggg2}
\underline{f}(x)-\underline{f}(y)\leq c\lVert x-y \rVert~\text{and}~\overline{f}(x)-\overline{f}(y)\leq c\lVert x-y \rVert~\text{for all}~ x,~y\in \mathcal{X}.
\end{equation}
With the help of the inequalities (\ref{eggg1}) and (\ref{eggg2}), we get
\[
\lvert \underline{f}(x)-\underline{f}(y)\rvert \leq c\lVert x-y \rVert~ \text{and}~\lvert \overline{f}(x)-\overline{f}(y)\rvert \leq c\lVert x-y \rVert~\text{for all}~ x,~y\in \mathcal{X},
\]
which implies
\[
\lVert \textbf{F}(x)\ominus_{gH}\textbf{F}(y)\rVert_{I(\mathbb{R})}\leq c\lVert x-y \rVert~\text{for all}~ x,~y\in \mathcal{X}.
\]
\end{proof}


\end{document}